\documentclass[a4paper,11pt]{amsart}
\usepackage{amscd,amssymb,amsmath,enumerate,color}
\usepackage[all]{xy}

\newcommand{\Mod}{\operatorname{Mod}\nolimits}
\newcommand{\Gr}{\operatorname{Gr}\nolimits}

\newcommand{\Hom}{\operatorname{Hom}\nolimits}

\renewcommand{\Im}{\operatorname{Im}\nolimits}

\newcommand{\rrad}{\mathfrak{r}}

\newcommand{\Ext}{\operatorname{Ext}\nolimits}
\newcommand{\gldim}{\operatorname{gldim}\nolimits}

\renewcommand{\L}{\Lambda}
\renewcommand{\l}{\lambda}
\newcommand{\cQ}{{\mathcal Q}}

\renewcommand{\b}{\bar}
\newcommand{\p}{\oplus}
\renewcommand{\a}{\alpha}

\renewcommand{\b}{\beta}
\newcommand{\g}{\gamma}
\newcommand{\R}{{\mathcal R}}
\newcommand{\Tip}{\operatorname{Tip}\nolimits}
\newcommand{\CTip}{\operatorname{CTip}\nolimits}
\newcommand{\Supp}{\operatorname{Supp}\nolimits}

\newtheorem{lem}{Lemma}[section]
\newtheorem{prop}[lem]{Proposition}
\newtheorem{cor}[lem]{Corollary}
\newtheorem{thm}[lem]{Theorem}
\theoremstyle{definition}
\newtheorem{defin}[lem]{Definition}

\newtheorem*{remark*}{Remark}
\newtheorem{example}[lem]{Example}

\setlength{\textwidth}{15cm}
\setlength{\textheight}{22cm}

\begin{document}

\topmargin 0cm
\oddsidemargin 0.5cm
\evensidemargin 0.5cm
\baselineskip=15pt

\title[The Ext algebra and a new generalisation of $D$-Koszul algebras]
{The Ext algebra and a new generalisation of $D$-Koszul algebras}

\author[Leader]{Joanne Leader}
\address{Joanne Leader\\
Department of Mathematics \\
University of Leicester \\
University Road \\
Leicester LE1 7RH \\
United Kingdom}
\email{jml47@student.le.ac.uk}
\author[Snashall]{Nicole Snashall}
\address{Nicole Snashall\\
Department of Mathematics \\
University of Leicester \\
University Road \\
Leicester LE1 7RH \\
United Kingdom}
\email{njs5@le.ac.uk}

\thanks{This work contributed to the first author's PhD thesis at the University of Leicester, which was supported by EPSRC}

\subjclass[2010]{16G20, 
16S37, 
16E30. 
}
\keywords{Koszul, $D$-Koszul, Ext algebra, finite generation, quadratic Gr\"obner basis.}

\begin{abstract}
We generalise Koszul and $D$-Koszul algebras by introducing a class of graded algebras called $(D,A)$-stacked algebras. We give a characterisation of $(D,A)$-stacked algebras and show that their Ext algebra is finitely generated as an algebra in degrees $0, 1, 2$ and $3$. In the monomial case, we give an explicit description of the Ext algebra by quiver and relations, and show that the ideal of relations has a quadratic Gr\"obner basis; this enables us to give a regrading of the Ext algebra under which the regraded Ext algebra is a Koszul algebra.
\end{abstract}

\maketitle

\section*{Introduction}
In this paper we introduce a class of graded algebras called $(D,A)$-stacked algebras which provide a natural generalisation of both Koszul algebras and $D$-Koszul algebras (\cite{bib:ber}) as well as the $(D,A)$-stacked monomial algebras of Green and Snashall (\cite{bib:GS2}, \cite{bib:GS3}).
There are many generalisations of Koszul algebras, and, in introducing $(D,A)$-stacked
algebras, we retain the essential property that each projective in a minimal graded projective resolution of $\L_0$ is generated in a single degree, and show that the Ext algebra is finitely generated. Moreover, it is well-known that the Ext algebra of a Koszul algebra is again a Koszul algebra, and we show that this extends to $(D,A)$-stacked monomial algebras, in that there is a regrading of the Ext algebra so that the regraded Ext algebra is a Koszul algebra.

Let $K$ be a field and let $\Lambda = K\cQ/I$ be a finite-dimensional algebra where $I$ is an admissible ideal of $K\cQ$. If the ideal $I$ is generated by length homogeneous elements then there is an induced length grading on $\Lambda$ so that $\Lambda = \Lambda_0 \oplus \Lambda_1 \oplus \Lambda_2 \oplus \cdots$. We suppose throughout this paper that $\Lambda = K\cQ/I$ is a graded finite-dimensional algebra with the length grading.
Let $\rrad$ denote the graded Jacobson radical of $\Lambda$, so $\rrad = \Lambda_1 \oplus \Lambda_2
\oplus \cdots$ and $\Lambda_0 \cong \Lambda/\rrad$ is a finite product of copies of $K$.
In this setting, graded $\Lambda$-modules have minimal graded projective resolutions.
So, viewing $\Lambda_0$ as a graded right $\Lambda$-module, there is a minimal graded projective
resolution $(P^n, d^n)$ of $\Lambda_0$. Recall that this resolution is minimal if
$\Im d^n \subseteq P^{n-1}\rrad$ for all $n \geqslant 1$.
The Ext algebra of $\Lambda$, $E(\L)$, is defined to be $E(\Lambda) = \bigoplus_{n\geqslant 0}\Ext^n_\Lambda(\Lambda_0,\Lambda_0)$ where multiplication is given by the Yoneda product.

The structure of the Ext algebra of a graded finitely-generated algebra is not in general well understood,
and it is often difficult to check whether the Ext algebra is itself finitely generated as an algebra.
It is well-known that the Ext algebra of a Koszul algebra is finitely generated in degrees 0 and 1
(\cite{bib:BGW, bib:GMV}), and this provides the starting point for the class of algebras we study in this paper.
Recall that a graded algebra $\L = \L_0 \p \L_1 \p \L_2 \p \cdots $
is said to be Koszul if $\L_0$ has a linear resolution, that is, if the $n$th projective module
$P^n$ in a minimal graded projective resolution $(P^n, d^n)$ of $\Lambda_0$ is generated in degree $n$.
Moreover, if $K\cQ / I$ is Koszul then $I$ is a quadratic ideal, that is, $I$ is generated by linear
combinations of paths of length $2$.

The paper is structured as follows. In Section~\ref{DA}, we define the class of $(D,A)$-stacked algebras and discuss their relationship to Koszul algebras, the $D$-Koszul algebras introduced by Berger (\cite{bib:ber}), the $(D,A)$-stacked monomial algebras of Green and Snashall (\cite{bib:GS2}) and the $\delta$-Koszul algebras of Green and Marcos (\cite{bib:GM}).
Our first result is Theorem~\ref{genin}, where we show that the Ext algebra of a $(D,A)$-stacked algebra
is finitely generated as an algebra in degrees $0, 1, 2$ and $3$. We then study the properties of the
Ext algebra of a $(D,A)$-stacked algebra and, in Theorem~\ref{charac}, provide a
characterisation of $(D,A)$-stacked algebras.
We give a regrading of the Ext algebra in Section~\ref{regrad} and recall some necessary Gr\"obner basis theory in Section~\ref{sec:GB}.
Section~\ref{sec:extmonomial} focuses on the monomial case, where we give an explicit description of the Ext algebra $E(\Lambda)$ by quiver and relations for a $(D,A)$-stacked monomial algebra $\Lambda$, so that $E(\Lambda)\cong K\Delta/I_\Delta$. In particular, we show in Proposition~\ref{prop:monredGB} that $I_\Delta$ has a quadratic reduced Gr\"obner basis in the case where $A > 1$ and $D \neq 2A$.
We then prove in Theorem~\ref{thm:main_monomial_thm_2} that the regraded Ext algebra of a $(D,A)$-stacked monomial algebra is a Koszul algebra, providing $D \neq 2A$ when $A > 1$. This generalises the previous results for Koszul and $D$-Koszul algebras (\cite[Theorem 7.1]{bib:GMMZ}), and provides further justification for these algebras being considered a natural generalisation of Koszul algebras.
The final section returns to the general case, where we give a non-monomial example of a $(D,A)$-stacked algebra where the regraded Ext algebra is also Koszul. We conclude the paper with some open questions.

\bigskip

Throughout this paper, $\Lambda = K\cQ/I$ is a graded finite-dimensional algebra where $I$ is length homogeneous. An arrow $\alpha$ starts at $s(\alpha)$ and ends at $t(\alpha)$; arrows in a path are read from left to right. A path $p = \alpha_1\alpha_2 \cdots \alpha_n$, where  $\alpha_1, \alpha_2 , \dots , \alpha_n$ are arrows, is of length $n$ with $s(p) = s(\alpha_{1})$ and $t(p) = t(\alpha_{n})$. If the ideal $I$ is generated by paths in $K\cQ$ then $K\cQ / I$ is a monomial algebra. An element $x$ in $K\cQ$ is uniform if there exist vertices $e_i, e_j \in \cQ_0$ such that $x = e_i x = x e_j$.

\section{$(D,A)$-Stacked Algebras}\label{DA}

In this section we introduce $(D,A)$-stacked algebras where $D \geqslant 2, A \geqslant 1$. The parameters $D$ and $A$ relate to the degree in which the projective modules $P^2$ and $P^3$ are generated, in a minimal projective resolution $(P^n, d^n)$ of $\Lambda / \rrad$. We then prove that the Ext algebra of a $(D,A)$-stacked algebra is finitely generated. We denote the global dimension of $\Lambda$ by $\gldim \L$.

\begin{defin}\label{d,a}
Let $\Lambda = K \cQ / I$ be a finite-dimensional algebra. Then $\Lambda$ is a $(D,A)$-stacked algebra if there is some $D \geqslant 2, A \geqslant 1$ such that, for all $0 \leqslant n \leqslant \gldim \L$, the projective module $P^n$ in a minimal projective resolution of $\Lambda / \rrad$ is generated in degree $\delta(n)$, where
$$\delta(n) = \begin{cases}
0 & \mbox{if $n = 0$} \\
1 & \mbox{if $n = 1$} \\
\frac{n}{2} D & \mbox{if $n$ even, $n \geqslant 2$ }\\
\frac{(n-1)}{2} D + A & \mbox{if $n$ odd, $n \geqslant 3$.}
\end{cases} $$
\end{defin}

\begin{remark*}
It is clear from the definition that if $\Lambda = K \cQ / I$ is a $(D,A)$-stacked algebra then the projective module $P^2$ is generated in degree $D$. Thus the ideal $I$ is length homogeneous and is generated by linear combinations of paths of length $D$, where $D \geqslant 2$. Hence $\L$ is necessarily a graded algebra with the length grading, and $\Lambda/\rrad \cong \Lambda_0$.
\end{remark*}

The $(D,A)$-stacked algebras with $A = 1$ are precisely the algebras where, for $0 \leqslant n \leqslant \gldim \L$, the projective module $P^n$ is generated in degree
$$\begin{cases}
\frac{n}{2} D & \mbox{if $n$ even, $n \geqslant 0$}\\
\frac{(n-1)}{2} D + 1 & \mbox{if $n$ odd, $n \geqslant 1$.}
\end{cases} $$
It follows that the $(D,1)$-stacked algebras are precisely the finite-dimensional $D$-Koszul algebras of Berger (\cite{bib:ber}). Berger introduced $D$-Koszul algebras in order to include some cubic Artin-Schelter regular algebras and anti-symmetrizer algebras. It was shown by Green, Marcos, Mart\'inez-Villa and Zhang in \cite[Theorem 4.1]{bib:GMMZ} that the Ext algebra of a $D$-Koszul algebra is finitely generated in degrees $0, 1$ and $2$. Koszul algebras are $D$-Koszul algebras with $D = 2$ and so the finite-dimensional Koszul algebras are precisely the $(D,A)$-stacked algebras where $D = 2, A = 1$. In this case, it is well-known that their Ext algebra is generated in degrees 0 and 1. We note also that the $(D,A)$-stacked algebras share the property of Koszul and $D$-Koszul algebras that each projective module in a minimal projective resolution of $\Lambda_0$ is generated in a single degree.

The $(D,A)$-stacked algebras clearly contain the $(D,A)$-stacked monomial algebras of Green and Snashall
\cite[Definition 3.1]{bib:GS2} (and see \cite{bib:GS3}).
For monomial algebras of infinite global dimension, the $(D, A)$-stacked monomial algebras are precisely
the monomial algebras for which every projective module $P^n$ in a minimal graded projective resolution
$(P^n, d^n)$ of $\Lambda_0$ is generated in a single degree and for which the Ext algebra $E(\L)$ is finitely
generated. It was shown in \cite[Theorem 3.6]{bib:GS3} that the Ext algebra of a
$(D,A)$-stacked monomial algebra is finitely generated in degrees $0, 1, 2$ and $3$.

In \cite{bib:GM}, Green and Marcos defined a graded algebra $\Lambda = \L_0 \p \L_1 \p  \cdots$
to be $\delta$-resolution determined if there is a function $\delta:\mathbb{N} \rightarrow \mathbb{N}$ such that, for all $0 \leqslant n \leqslant \gldim \L$, the $n$th projective $P^n$ in a minimal graded projective resolution $(P^n, d^n)$ of $\Lambda_0$ over $\Lambda$ is generated in degree $\delta(n)$. Additionally, a $\delta$-resolution determined algebra $\Lambda$ is said to be $\delta$-Koszul if $E(\Lambda)$ is finitely generated as an algebra.
It is clear that a $(D,A)$-stacked algebra is $\delta$-resolution determined, with $\delta$ as in Definition~\ref{d,a}.

\bigskip

We now give an example of a $(D,A)$-stacked algebra with  $D = 4$ and $A = 2$ which is not monomial.

\begin{example}\label{cube}
Let $\mathcal{Q}$ be the quiver given by
$$\xymatrix{
 & 2\ar[dr]^{\a_2} & \\
 1\ar[ur]^{\a_1} \ar[dr]^{\a_7} & & 3\ar[d]^{\a_3} \\
6\ar[u]^{\a_6} & 7\ar[ur]^{\a_8} & 4\ar[dl]^{\a_4}\\
 & 5\ar[ul]^{\a_5} & \\
}$$
and let $\Lambda = K\mathcal{Q} / I$, where $I = \langle (\a_1 \a_2 - \a_7 \a_8 )\a_3 \a_4 , \a_3 \a_4 \a_5 \a_6 , \a_5 \a_6 (\a_1 \a_2 - \a_7 \a_8 ) \rangle$.

The ideal $I$ is homogeneous with generators all of length 4. Thus $\Lambda$ is a length graded algebra. We show that $\Lambda$ is a $(D,A)$-stacked algebra with $D = 4$ and $A = 2$, by explicitly describing a minimal graded projective resolution of $\Lambda_0$.

Let $S_i$ be the simple module corresponding to the idempotent $e_i$, for $i = 1, \ldots, 7$.
It is easy to see that $S_2, S_4, S_6$ and $S_7$ have finite projective dimension, whilst $S_1, S_3$ and $S_5$ have
infinite projective dimension. Define projective $\Lambda$-modules $P^0 = \bigoplus_{i=1}^7 e_i \Lambda$,
$P^1 = \bigoplus_{i=1}^8 t(\alpha_i) \Lambda$ and $P^n = e_1 \Lambda \p e_3 \Lambda \p e_5 \Lambda$ for $n \geqslant 2$.
Let $d^0: P^0 \to \Lambda/\rrad$ be the canonical surjection, and
define $\L$-homomorphisms $d^n: P^n \to P^{n-1}$, for $ n\geqslant 1$, as follows:
\begin{enumerate}
\item[$\bullet$] $d^1: t(\alpha_i) \mapsto s(\alpha_i)\alpha_i t(\alpha_i)$\\
\item[$\bullet$] $d^2: \begin{cases}
e_1 \mapsto e_4\a_4\a_5\a_6 \\
e_3 \mapsto e_6\a_6(\a_1 \a_2 - \a_7 \a_8)\\
e_5 \mapsto e_2\a_2\a_3\a_4 - e_7\a_8\a_3\a_4\\
\end{cases}$\\
\item[$\bullet$] for $n \geqslant 3$, \ $d^n: \begin{cases}
e_1 \mapsto e_5\a_5 \a_6\\
e_3 \mapsto e_1(\a_1 \a_2 - \a_7 \a_8)\\
e_5 \mapsto e_3\a_3 \a_4\\
\end{cases}$\\
\end{enumerate}
where each path $p$ lies in the $s(p)\L$-component of $P^{n-1}$.
It is straightforward to verify that $(P^n, d^n)$ is a minimal graded projective resolution of $\L_0$ as a right $\L$-module.

It can be seen that the entries in a matrix representation of $d^0$ (resp. $d^1$, $d^2$) are of length 0 (respectively, 1, 3) so $P^0$ (respectively, $P^1$, $P^2$) is generated in degree 0 (respectively, 1, 4).
Moreover, for $n \geqslant 3$, the entries in a matrix representation of $d^n$ are of length $2$, so $P^n$ is generated in degree 2 more than that of $P^{n-1}$. Thus, for $n \geqslant 0$, the projective module $P^n$ is generated in degree
$$\begin{cases}
0 & \mbox{if $n = 0$} \\
1 & \mbox{if $n = 1$} \\
2n & \mbox{if $n \geqslant 2$}.
\end{cases} $$
Hence $\Lambda$ is a $(D,A)$-stacked algebra with $D = 4, A = 2$.
\end{example}

In order to prove that the Ext algebra of a $(D,A)$-stacked algebra is finitely generated, we require the following result from \cite{bib:GMMZ} which we state here for completeness.

\begin{prop}\label{sumof}\cite[Proposition 3.6]{bib:GMMZ}
Let $\Lambda=K\cQ/I$ be a finite-dimensional graded algebra with the length grading and let
$\cdots \rightarrow P^2 \rightarrow P^1 \rightarrow P^0 \rightarrow \Lambda_0 \rightarrow 0$
be a minimal graded projective resolution of $\Lambda_0$ as a right $\Lambda$-module.
Suppose that $P^i$ is finitely generated with generators in degree $d_i$, for $i = \a , \beta , \a + \beta$. Assume that  $d_{\a + \beta} = d_{\a} + d_{\beta}.$

Then the Yoneda map
$$\Ext^{\a}_{\Lambda} (\Lambda_0 , \Lambda_0 ) \times \Ext^{\beta}_{\Lambda} (\Lambda_0 , \Lambda_0 )
\rightarrow \Ext^{\a + \beta}_{\Lambda} (\Lambda_0 , \Lambda_0 )$$
is surjective.
Thus $$\Ext^{\a + \beta}_{\Lambda} (\Lambda_0 , \Lambda_0 )= \Ext^{\a}_{\Lambda} (\Lambda_0 , \Lambda_0 )\times
\Ext^{\beta}_{\Lambda} (\Lambda_0 , \Lambda_0 ) = \Ext^{\beta}_{\Lambda} (\Lambda_0 , \Lambda_0 ) \times
\Ext^{\a}_{\Lambda} (\Lambda_0 , \Lambda_0 ).$$
\end{prop}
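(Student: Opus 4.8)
The plan is to exploit minimality, which makes the cochain complex $\Hom_\Lambda(P^\bullet,\Lambda_0)$ have zero differentials, so that $\Ext^n_\Lambda(\Lambda_0,\Lambda_0)\cong\Hom_\Lambda(P^n,\Lambda_0)$ for every $n$. Since $P^n$ is finitely generated in the single degree $d_n$ and $\Lambda_0$ is concentrated in degree $0$, every homomorphism $P^n\to\Lambda_0$ factors through the top, so this identifies $\Ext^n$ with $\Hom_\Lambda(P^n/P^n\rrad,\Lambda_0)$, a space carried entirely in the single internal degree $d_n$; write $T_n=P^n/P^n\rrad$ for the graded top, a semisimple module concentrated in degree $d_n$. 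The Yoneda product is compatible with the bigrading by cohomological degree and internal degree, so it sends $\Ext^\alpha\times\Ext^\beta$ into the internal degree $d_\alpha+d_\beta$ part of $\Ext^{\alpha+\beta}$, and the hypothesis $d_{\alpha+\beta}=d_\alpha+d_\beta$ guarantees this is all of $\Ext^{\alpha+\beta}$. Thus the entire content is the surjectivity statement: granting it, the image is $\Ext^\alpha\cdot\Ext^\beta=\Ext^{\alpha+\beta}$, and applying the same statement with $\alpha$ and $\beta$ interchanged (the hypotheses being symmetric in $\alpha,\beta$) gives $\Ext^\beta\cdot\Ext^\alpha=\Ext^{\alpha+\beta}$, which is the displayed chain of equalities.

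To compute products I would use comparison maps. Given $\eta\in\Ext^\beta=\Hom_\Lambda(P^\beta,\Lambda_0)$, choose a graded chain map $\{\tilde\eta_i\colon P^{\beta+i}\to P^i\}_{i\geqslant0}$ lifting $\eta$, so that $d^i\tilde\eta_i=\tilde\eta_{i-1}d^{\beta+i}$ and (by the degree-$0$ differentials) each $\tilde\eta_i$ lowers internal degree by $d_\beta$; then for $\theta\in\Ext^\alpha$ the Yoneda product $\theta\cup\eta$ is represented by $\theta\circ\tilde\eta_\alpha\colon P^{\alpha+\beta}\to P^\alpha\to\Lambda_0$. The key observation is a degree count: because $\tilde\eta_\alpha$ lowers degree by $d_\beta$ and $P^{\alpha+\beta}$ is generated in degree $d_\alpha+d_\beta$, the map $\tilde\eta_\alpha$ carries the generators of $P^{\alpha+\beta}$ into the degree $d_\alpha$ component of $P^\alpha$, which is precisely the span of its top generators. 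Consequently $\tilde\eta_\alpha$ descends to a map $\overline{\tilde\eta_\alpha}\colon T_{\alpha+\beta}\to T_\alpha$ of tops, and the product is computed at the level of tops as $\bar\theta\circ\overline{\tilde\eta_\alpha}$, where $\bar\theta\in\Hom_\Lambda(T_\alpha,\Lambda_0)$ ranges over all of $\Ext^\alpha$.

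Since $\bar\theta$ is arbitrary, the image of the product map is the span of all functionals on $T_{\alpha+\beta}$ factoring through some $\overline{\tilde\eta_\alpha}$, and this span equals $\Hom_\Lambda(T_{\alpha+\beta},\Lambda_0)=\Ext^{\alpha+\beta}$ exactly when $\bigcap_{\eta\in\Ext^\beta}\Ker\overline{\tilde\eta_\alpha}=0$. Equivalently, I must show that every top generator $z$ of $P^{\alpha+\beta}$ is detected, i.e. that $\tilde\eta_\alpha(z)\notin P^\alpha\rrad$ for some $\eta$. I would prove this by running the lifting recursion downward from $i=\alpha$ and using that $z$ is a minimal generator of $\Omega^{\alpha+\beta}=\Omega^\beta(\Omega^\alpha)$, where $\Omega^\alpha$ is generated in degree $d_\alpha$ with top $T_\alpha$ dual to $\Ext^\alpha$; a convenient choice of $\eta$ (dual to a suitable top generator of $P^\beta$) then transports $z$ to a generator of $\Omega^\alpha$ modulo its radical. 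This is cleanly phrased as injectivity of the comparison map $T_{\alpha+\beta}\to Q^\beta/Q^\beta\rrad$ induced by the projection $\Omega^\alpha\twoheadrightarrow T_\alpha$, where $Q^\bullet$ is the minimal resolution of the semisimple module $T_\alpha$; here $Q^\beta$ is again generated in degree $d_\alpha+d_\beta$ by additivity, so the two resolutions line up in the single relevant internal degree.

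The main obstacle is precisely this detection step, and it is genuinely homological rather than a matter of grading bookkeeping: when $d_\beta>\beta$ the radical $\Omega^\alpha\rrad$ has a minimal resolution whose $\beta$th term can also carry generators in degree $d_\alpha+d_\beta$, so the naive degree estimate on $\Ext^\beta_\Lambda(\Omega^\alpha\rrad,\Lambda_0)$ does not by itself force the relevant restriction map to vanish. What makes the argument succeed is the combination of minimality ($\Im d^n\subseteq P^{n-1}\rrad$ for all $n$), which keeps the induced maps on tops under control, and the additivity hypothesis, which is exactly what synchronises the minimal resolution of $\Omega^\alpha$ with that of its top $T_\alpha$ at cohomological step $\beta$. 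Carrying out this comparison in the single internal degree $d_\alpha+d_\beta$ is the heart of the proof; once the detection step is secured, surjectivity follows, and the displayed equalities follow as in the first paragraph.
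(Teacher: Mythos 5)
First, a point of context: the paper itself contains no proof of this statement --- it is quoted from \cite[Proposition 3.6]{bib:GMMZ} ``for completeness'' --- so there is no in-paper argument to compare against, and your attempt has to stand on its own as a proof of the cited result. Judged that way, your first three paragraphs are a correct and cleanly organised reduction: minimality gives $\Ext^n_\Lambda(\Lambda_0,\Lambda_0)\cong\Hom_\Lambda(P^n,\Lambda_0)\cong\Hom_\Lambda(T_n,\Lambda_0)$; graded liftings $\tilde\eta_\alpha$ descend to maps of tops $T_{\alpha+\beta}\to T_\alpha$ (well defined modulo the radical precisely because of minimality); the two displayed equalities do follow from surjectivity applied to $(\alpha,\beta)$ and $(\beta,\alpha)$; and surjectivity of the Yoneda map is indeed equivalent to your ``detection'' statement that $\bigcap_{\eta}\Ker\overline{\tilde\eta_\alpha}=0$, i.e.\ that the comparison map $T_{\alpha+\beta}\to Q^\beta/Q^\beta\rrad$ is injective.

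The genuine gap is that this detection statement --- which, as you yourself say, is ``the entire content'' of the proposition --- is never proved. Paragraph three only describes what you \emph{would} do (``a convenient choice of $\eta$ \dots then transports $z$ to a generator of $\Omega^\alpha$ modulo its radical''): no such $\eta$ is exhibited and no argument shows one exists. Paragraph four then concedes that the one concrete tool available, the degree estimate on $\Ext^\beta_\Lambda(\Omega^\alpha\rrad,\Lambda_0)$, fails whenever $d_\beta>\beta$ --- which is exactly the regime relevant to this paper, since for a $(D,A)$-stacked algebra with $D>2$ one has $d_\beta=\delta(\beta)>\beta$. Your closing claim that ``the combination of minimality and the additivity hypothesis'' makes the argument succeed is the theorem restated, not a proof of it: in your write-up additivity enters only as bookkeeping (it places the tops of $P^{\alpha+\beta}$ and $Q^\beta$ in the same internal degree $d_\alpha+d_\beta$), and two semisimple modules concentrated in the same degree can perfectly well be joined by a non-injective map. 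So what you have is a correct reformulation of the problem together with an explicitly acknowledged IOU where the actual work of \cite[Proposition 3.6]{bib:GMMZ} must be done; as a proof it is incomplete at precisely the step where the hypothesis $d_{\alpha+\beta}=d_\alpha+d_\beta$ has to be used in an essential, and not merely degree-counting, way.
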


\begin{thm}\label{genin}
Let $\Lambda = K\cQ / I$ be a $(D,A)$-stacked algebra with $D \geqslant 2$ and $A \geqslant 1$. Then $E(\Lambda)$ is generated in degrees $0, 1, 2$ and $3$.
\end{thm}

\begin{proof}
The algebra $\Lambda = K \cQ / I$ is a $(D,A)$-stacked algebra, so $I$ is generated by homogeneous elements of length $D$.
Thus $\Lambda$ is a graded algebra with the length grading. Each projective $P^n$ is generated in degree $\delta(n)$ where
$\delta$ is as in Definition~\ref{d,a}. Let $n\geqslant 4$ and recall that $\delta(2) = D$. For $n$ even, we have
$\delta(n-2) = (n-2)D/2$ and $\delta(n) =
nD/2$ so $\delta(2) + \delta(n-2) = \delta(n)$; similarly, for $n$ odd, we have
$\delta(n-2) = (n-3)D/2 + A$ and $\delta(n) = (n-1)D/2 + A$, so again we have $\delta(2) + \delta(n-2) = \delta(n)$.
Thus, from Proposition~\ref{sumof}, $\Ext^{n}_{\Lambda} (\Lambda_0, \Lambda_0 )= \Ext^{2}_{\Lambda} (\Lambda_0, \Lambda_0)\times \Ext^{n-2}_{\Lambda} (\Lambda_0, \Lambda_0 )$
for all $n\geqslant 4$. Thus, $E(\Lambda)$ is generated in degrees (at most) $0, 1, 2$ and $3$.
\end{proof}

The following corollary is now immediate.

\begin{cor}
Let $\Lambda = K\cQ / I$ be a $(D,A)$-stacked algebra with $D \geqslant 2$ and $A \geqslant 1$. Then $E(\Lambda)$ is a $\delta$-Koszul algebra.
\end{cor}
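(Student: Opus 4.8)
To prove that $E(\Lambda) = \bigoplus_{n \geqslant 0}\Ext^n_\Lambda(\Lambda_0,\Lambda_0)$ is $\delta$-Koszul, I would first unwind the definition of Green and Marcos as applied to $E(\Lambda)$ itself, graded by cohomological degree. Two conditions must be verified: that $E(\Lambda)$ is $\delta'$-resolution determined for some $\delta' \colon \mathbb{N} \to \mathbb{N}$, meaning that each projective $Q^m$ in a minimal graded projective resolution of $E(\Lambda)_0$ over $E(\Lambda)$ is generated in the single degree $\delta'(m)$; and that the Ext algebra $E(E(\Lambda))$ is finitely generated as an algebra. As a preliminary step I would record that $E(\Lambda)$ is a graded algebra of the required kind: its degree-zero part $\Ext^0_\Lambda(\Lambda_0,\Lambda_0)$ is isomorphic to $\Lambda_0$, a finite product of copies of $K$ and hence semisimple, so the graded radical of $E(\Lambda)$ is $\bigoplus_{n \geqslant 1}\Ext^n_\Lambda(\Lambda_0,\Lambda_0)$.

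For the finite generation of $E(E(\Lambda))$, the plan is to obtain a presentation $E(\Lambda) \cong K\Delta/I_\Delta$ by quiver and relations. By Theorem~\ref{genin}, $E(\Lambda)$ is generated in degrees $0, 1, 2$ and $3$, so the vertices of $\Delta$ are indexed by $\cQ_0$ and the arrows by a minimal generating set lying in degrees $1, 2$ and $3$; this is carried out explicitly for monomial $\Lambda$ in Section~\ref{sec:extmonomial}. Once $E(\Lambda)$ is presented as $K\Delta/I_\Delta$ with $I_\Delta$ length homogeneous under the regrading of Section~\ref{regrad}, I would mimic the proof of Theorem~\ref{genin}: establish that $E(\Lambda)$ is $\delta'$-resolution determined with a $\delta'$ satisfying an additivity relation $\delta'(m) = \delta'(m-k) + \delta'(k)$ above a fixed bound, and then apply Proposition~\ref{sumof} to $E(\Lambda)$ in place of $\Lambda$, forcing every Yoneda product over $E(\Lambda)$ in high cohomological degree to decompose and hence giving finite generation of $E(E(\Lambda))$.

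For the $\delta'$-resolution determined condition, I would construct a minimal graded projective resolution of $E(\Lambda)_0$ over $E(\Lambda)$ and track the generation degree of each projective. The crucial structural input is the rigidity of $(D,A)$-stacked algebras, namely that each $\Ext^n_\Lambda(\Lambda_0,\Lambda_0)$ is concentrated in the single internal degree $\delta(n)$ of Definition~\ref{d,a}; it is this concentration that should propagate to single-degree generation of the syzygies of $E(\Lambda)_0$. The cleanest route is through the regrading of Section~\ref{regrad}, under which the generators of $E(\Lambda)$ in degrees $1, 2$ and $3$ all acquire degree $1$, so that in the monomial case, with $D \neq 2A$ when $A > 1$, the regraded algebra is Koszul by Theorem~\ref{thm:main_monomial_thm_2}. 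A Koszul algebra is automatically $\delta$-Koszul with $\delta'(m) = m$, its resolution being linear and its Ext algebra generated in degrees $0$ and $1$; pulling this linear resolution back through the regrading would then yield the single generation degrees required for $E(\Lambda)$.

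The step I expect to be the main obstacle is controlling the minimal graded projective resolution of $E(\Lambda)_0$ over $E(\Lambda)$, since this second-order resolution governs both $\delta$-Koszul conditions and is not determined by the finite generation of $E(\Lambda)$ supplied by Theorem~\ref{genin}. Tracking its single generation degrees is as delicate as the original computation, and in the general non-monomial setting it is unclear that $E(\Lambda)$ must be $\delta'$-resolution determined without the explicit quiver-and-relations description and the regrading available in the monomial sections. I would therefore expect a clean argument only in the monomial case, where Section~\ref{sec:extmonomial} supplies the presentation and Theorem~\ref{thm:main_monomial_thm_2} the Koszulity of the regraded Ext algebra; the general case would require an independent analysis of the second syzygies of $E(\Lambda)_0$.
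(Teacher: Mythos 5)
Your proposal attacks a much stronger statement than the one the paper proves, and the extra strength is exactly where it breaks down. In the Green--Marcos framework recalled in Section~\ref{DA}, ``$\delta$-Koszul'' is verified through two conditions on the algebra $\L$: that $\L$ is $\delta$-resolution determined, and that its Ext algebra $E(\L)$ is finitely generated. For a $(D,A)$-stacked algebra both conditions are already in hand: $\L$ is $\delta$-resolution determined with $\delta$ exactly as in Definition~\ref{d,a} (this is observed in Section~\ref{DA} immediately after the Green--Marcos definitions are recalled), and Theorem~\ref{genin} shows that $E(\L)$ is generated as an algebra in degrees $0,1,2,3$, hence finitely generated. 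The corollary is the immediate conjunction of these two facts --- which is why the paper offers no proof beyond ``now immediate.'' No minimal projective resolution of $E(\L)_0$ over $E(\L)$, and no information about $E(E(\L))$, enters at any point. You instead read the conclusion as asserting that the algebra $E(\L)$ itself satisfies the definition, i.e.\ that $E(\L)$ is $\delta'$-resolution determined and $E(E(\L))$ is finitely generated; that is a second-order statement which the paper neither claims nor needs.

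Moreover, that stronger statement is not provable by your route, as you yourself partly sense. The corollary holds for every $(D,A)$-stacked algebra with $D \geqslant 2$, $A \geqslant 1$, whereas your only mechanism for controlling resolutions over $E(\L)$ --- the hat-regrading of Section~\ref{regrad} together with Theorem~\ref{thm:main_monomial_thm_2} --- requires $\L$ monomial and $D \neq 2A$ when $A > 1$. Proposition~\ref{NOregrading} shows this restriction is essential: for $D = 2A$, $A > 1$ and $\gldim \L \geqslant 6$ there is no regrading whatsoever under which the Ext algebra is Koszul, so the Koszulity detour cannot cover the corollary's generality. Even where $\hat{E}(\L)$ is Koszul, linearity of its resolution is with respect to the hat-degree, in which $\hat{E}(\L)_1 = \Ext^1_{\L}(\L_0,\L_0) \p \Ext^2_{\L}(\L_0,\L_0) \p \Ext^3_{\L}(\L_0,\L_0)$ and $\hat{E}(\L)_n$ mixes cohomological degrees $2n$ and $2n+1$; ``pulling back'' a linear resolution through this regrading therefore does not yield generation of each syzygy in a single cohomological degree, which is what your $\delta'$-resolution determined condition would demand. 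In short, the step you flagged as the main obstacle --- controlling the resolution of $E(\L)_0$ over $E(\L)$ --- is genuinely out of reach in this generality, but it was never required: the intended proof is the one-line application of the Green--Marcos definition to $\L$.
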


\section{Properties of the Ext algebra}\label{E(A)}

In this section we investigate some of the general properties of the Ext algebra of a $(D,A)$-stacked algebra and give a full characterisation of $(D,A)$-stacked algebras in Theorem~\ref{charac}.

We use the notation and results of \cite{bib:GMMZ} which we briefly recall here.
Let $\L = \L_0 \p \L_1 \p \cdots$ be a graded algebra, and let $M = \p_i M_i$ be a graded $\L$-module. The $n$th shift of $M$, denoted $M[n]$, is the graded $\L$-module $X = \p_i X_i$, where $X_i = M_{i-n}$. Hence, if $M$ is generated in degree $n$, then $M[-n]$ is generated in degree $0$. Let $\Gr(\L)$ be the category of graded right $\L$-modules together with the set of degree $0$ homomorphisms  and let $F$ denote the forgetful functor, $F: \Gr(\L) \to \Mod$-$\L$.

In addition to the grading of the Ext algebra by homological degree, namely
$E(\Lambda) = \bigoplus_{n\geqslant 0}\Ext^n_\Lambda(\Lambda_0,\Lambda_0)$, we also have the shift-grading.
Specifically, let $M = \p_i M_i$ and $N = \p_i N_i$ be graded $\L$-modules and suppose we have a graded projective resolution $(Q^{n},d^n)$ of $M$ where each $Q^n$ is finitely generated. Following \cite{bib:GMMZ}, we define $$\Ext^n_{\L} (F(M), F(N))_i = \Ext^n_{\Gr(\L)} (M, N[i]),$$ which is the homology of the complex obtained by applying $\Hom_{\Gr(\L)} (-,N[i])$ to $(Q^{n}, d^n)$, and is called the shift-grading.
In our setting, we have $M = N = \Lambda_0$ and a minimal graded projective resolution $(P^n, d^n)$ of $\Lambda_0$ as a $\L$-module. We can then apply \cite[Theorem 2.1]{bib:GMMZ} to see that
$$\Ext^n_{\L} (F(\Lambda_0), F(\Lambda_0))_i \cong \Hom_{\Gr(\L)} ( \Omega^n (\Lambda_0), \Lambda_0[i]) \cong \Hom_{\Gr(\L)} ( \Omega^n (\Lambda_0)[-i], \Lambda_0),$$ where $\Omega^n(\Lambda_0)$ denotes the $n$th syzygy of $\Lambda_0$ with respect to the resolution $(P^n, d^n)$.

We will use this to investigate when products in the Ext algebra are zero. We assume that $\L$ is a graded algebra and $(P^n, d^n)$ is a minimal graded projective resolution of $\L_0$.

\begin{prop}\label{1times1}
Let $\Lambda$ be a $(D,A)$-stacked algebra with $D > 2$. Then $$\Ext^{1}_{\Lambda} (\Lambda_0 , \Lambda_0) \times\Ext^{1}_{\Lambda} (\Lambda_0 , \Lambda_0) = 0.$$
\end{prop}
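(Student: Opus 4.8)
The plan is to exploit the shift-grading on the Ext algebra recalled above, together with the observation that, for a $(D,A)$-stacked algebra, each $\Ext^n_{\Lambda}(\Lambda_0,\Lambda_0)$ is concentrated in a single shift-degree. Since the Yoneda product respects the shift-grading, a product of two elements of shift-degree $1$ must land in shift-degree $2$ of $\Ext^2$, and I will show this component is forced to vanish when $D > 2$.

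First I would establish that $\Ext^n_{\Lambda}(\Lambda_0,\Lambda_0)$ is concentrated in shift-degree $\delta(n)$. Using the isomorphism $\Ext^n_{\Lambda}(F(\Lambda_0),F(\Lambda_0))_i \cong \Hom_{\Gr(\Lambda)}(\Omega^n(\Lambda_0),\Lambda_0[i])$, I note that because the resolution $(P^n,d^n)$ is minimal, the map $P^n \to \Omega^n(\Lambda_0)$ is a projective cover, so $\Omega^n(\Lambda_0)$ has the same top as $P^n$ and is therefore generated in degree $\delta(n)$. As $\Lambda_0[i]$ is semisimple and concentrated in degree $i$, any degree-$0$ graded homomorphism $\Omega^n(\Lambda_0) \to \Lambda_0[i]$ factors through the top of $\Omega^n(\Lambda_0)$, which lives in degree $\delta(n)$, and hence is zero unless $i = \delta(n)$. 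Thus $\Ext^n_{\Lambda}(\Lambda_0,\Lambda_0)_i = 0$ for $i \neq \delta(n)$. In particular $\Ext^1_{\Lambda}(\Lambda_0,\Lambda_0)$ is concentrated in shift-degree $\delta(1) = 1$, while $\Ext^2_{\Lambda}(\Lambda_0,\Lambda_0)$ is concentrated in shift-degree $\delta(2) = D$.

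Next I would invoke the compatibility of the Yoneda product with the shift-grading, namely that the product maps $\Ext^1_i \times \Ext^1_j$ into $\Ext^2_{i+j}$. Taking $i = j = 1$, the image of $\Ext^1_{\Lambda}(\Lambda_0,\Lambda_0) \times \Ext^1_{\Lambda}(\Lambda_0,\Lambda_0)$ lies in shift-degree $2$ of $\Ext^2$. Since $\Ext^2_{\Lambda}(\Lambda_0,\Lambda_0)$ is concentrated in shift-degree $D$ and $D > 2$, we have $\Ext^2_{\Lambda}(\Lambda_0,\Lambda_0)_2 = 0$, whence the product vanishes.

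The argument is essentially formal once the bigraded description of $E(\Lambda)$ is in hand; the one point genuinely requiring care is the claim that $\Omega^n(\Lambda_0)$ is generated in the single degree $\delta(n)$, which rests on the minimality of the projective resolution so that the generators of $\Omega^n(\Lambda_0)$ correspond to those of $P^n$. I expect this concentration statement to be the main step, though it is standard for minimal graded resolutions and will likely be reused elsewhere in this section.
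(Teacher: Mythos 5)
Your proof is correct and follows essentially the same route as the paper: both arguments show that the Yoneda product of two degree-one classes lands in the shift-degree-$2$ component of $\Ext^2_\Lambda(\Lambda_0,\Lambda_0)$, which vanishes because $P^2$ is generated in degree $D > 2$. The only difference is one of emphasis: the paper proves the additivity of shift-degrees by explicitly splicing shifted extensions, whereas you cite that compatibility as a formal fact and instead spell out the concentration statement $\Ext^n_\Lambda(\Lambda_0,\Lambda_0)_i = 0$ for $i \neq \delta(n)$ via minimality and the top of $\Omega^n(\Lambda_0)$, a step the paper uses implicitly.
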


\begin{proof}
The projective module $P^1$ is generated in degree $1$ and
$\Ext^1_{\Lambda}(\Lambda_0 , \Lambda_0)  \cong \Hom_\L(P^1, \Lambda_0)$, so every element of
$\Ext^1_{\Lambda}(\Lambda_0, \Lambda_0)$ can be viewed as a short exact sequence of graded modules of the form
$0 \to \L_0[-1] \to E \to \L_0 \to 0$.
Let
\begin{equation}\label{P1}
0\rightarrow \Lambda_0 [-1] \rightarrow E \rightarrow \Lambda_0 \rightarrow 0
\end{equation}
and
\begin{equation}
\label{P2} 0\rightarrow \Lambda_0 [-1] \rightarrow \hat{E} \rightarrow \Lambda_0 \rightarrow 0
\end{equation}
be two short exact sequences in $\Ext^1_{\Lambda}(\Lambda_0, \Lambda_0)$.
We can shift the sequence \eqref{P2} by $-1$ to get
\begin{equation}\label{P3}
0\rightarrow \Lambda_0 [-2] \rightarrow \hat{E}[-1] \rightarrow \Lambda_0 [-1] \rightarrow 0.
\end{equation}
We then splice the sequences \eqref{P1} and \eqref{P3} together to obtain
$$0\rightarrow \Lambda_0 [-2] \rightarrow \hat{E}[-1]\rightarrow E \rightarrow \Lambda_0 \rightarrow 0.$$
Thus the image of $\Ext^1_{\Lambda}(\Lambda_0 , \Lambda_0)\times \Ext^1_{\Lambda}(\Lambda_0 , \Lambda_0)$ is contained
in $\Ext^2_{\Lambda}(\Lambda_0 , \Lambda_0)_2$. However, we know that $P^2$ is generated in degree $D \neq 2$, so
$\Ext^2_{\L} (\L_0, \L_0)_2 = 0$.
Therefore, $\Ext^1_{\Lambda}(\Lambda_0 , \Lambda_0)\times \Ext^1_{\Lambda}(\Lambda_0 , \Lambda_0) = 0$, when $D > 2$.
\end{proof}

\begin{prop}\label{ones}
Let $\Lambda$ be a $(D,A)$-stacked algebra with $D > 2$.
\begin{enumerate}[(i)]
\item If $D \neq A+1$ then $\Ext^{n}_{\Lambda} (\Lambda_0 , \Lambda_0) \times \Ext^{1}_{\Lambda} (\Lambda_0 , \Lambda_0) =
0 = \Ext^{1}_{\Lambda} (\Lambda_0 , \Lambda_0) \times\Ext^{n}_{\Lambda} (\Lambda_0 , \Lambda_0),$ for all $n$ odd, $n \geqslant 1$. \item If $A > 1$ then $\Ext^{n}_{\Lambda} (\Lambda_0 , \Lambda_0) \times \Ext^{1}_{\Lambda} (\Lambda_0 , \Lambda_0) =
0 = \Ext^{1}_{\Lambda} (\Lambda_0 , \Lambda_0) \times\Ext^{n}_{\Lambda} (\Lambda_0 , \Lambda_0),$  for all $n$ even, $n \geqslant 2$.
\end{enumerate}
\end{prop}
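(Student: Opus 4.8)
The plan is to mimic the degree-counting argument of Proposition~\ref{1times1}, now comparing a product $\Ext^n \times \Ext^1$ (and the reverse) against the degree in which the syzygy $P^{n+1}$ is actually generated. The key tool is the shift-grading: a product in $\Ext^n_\Lambda(\Lambda_0,\Lambda_0) \times \Ext^1_\Lambda(\Lambda_0,\Lambda_0)$ lands in $\Ext^{n+1}_\Lambda(\Lambda_0,\Lambda_0)_i$ for the value of $i$ dictated by splicing together the relevant short exact sequences, and this group vanishes unless $i$ equals the generation degree $\delta(n+1)$ of $P^{n+1}$. So the entire proof reduces to comparing two numbers: the shift-degree $i$ produced by the Yoneda splice, and the prescribed value $\delta(n+1)$.

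First I would record the generation degrees. The class in $\Ext^n$ corresponds (via the shift-grading and \cite[Theorem 2.1]{bib:GMMZ}) to a sequence with total shift $\delta(n)$, and the class in $\Ext^1$ contributes a shift of $\delta(1)=1$. Splicing as in Proposition~\ref{1times1} therefore produces an element of $\Ext^{n+1}_\Lambda(\Lambda_0,\Lambda_0)_{\delta(n)+1}$. The product is forced to be zero precisely when $\delta(n)+1 \neq \delta(n+1)$, since in that case $\Ext^{n+1}_\Lambda(\Lambda_0,\Lambda_0)_{\delta(n)+1}=0$ as $P^{n+1}$ is generated only in degree $\delta(n+1)$. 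Thus the whole argument is a case check on the parity of $n$.

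For part~(i), take $n$ odd with $n\geqslant 1$, so that $n+1$ is even. Then $\delta(n) = \tfrac{n-1}{2}D + A$ (with the convention $\delta(1)=1$ handled separately, though the formula $\tfrac{n-1}{2}D + A$ gives $A$ when $n=1$, and the $n=1$ case is exactly Proposition~\ref{1times1} reinterpreted) and $\delta(n+1) = \tfrac{n+1}{2}D$. The condition $\delta(n)+1 \neq \delta(n+1)$ becomes $\tfrac{n-1}{2}D + A + 1 \neq \tfrac{n+1}{2}D$, i.e.\ $A + 1 \neq D$. Hence the hypothesis $D \neq A+1$ is exactly what makes the target shift-graded piece vanish, giving both products zero. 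For part~(ii), take $n$ even with $n\geqslant 2$, so $n+1$ is odd: here $\delta(n) = \tfrac{n}{2}D$ and $\delta(n+1) = \tfrac{n}{2}D + A$, so $\delta(n)+1 \neq \delta(n+1)$ reduces to $1 \neq A$, i.e.\ $A > 1$. The reverse products $\Ext^1 \times \Ext^n$ follow identically, since the Yoneda splice lands in the same shift-graded component $\Ext^{n+1}_\Lambda(\Lambda_0,\Lambda_0)_{\delta(n)+1}$ regardless of the order in which the two sequences are spliced.

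The main thing to be careful about is not the splicing itself (which is routine once Proposition~\ref{1times1} is in hand) but correctly identifying which boundary cases of $n$ are covered by which formula for $\delta$, and making sure the $n=1$ endpoint in part~(i) is consistent with the separately-defined value $\delta(1)=1$. One should also confirm that $D>2$ is genuinely needed only to guarantee that the relevant intermediate degrees are distinct from the possibly-coincidental Koszul behaviour; the real content is entirely in the arithmetic inequalities $D\neq A+1$ and $A>1$, which are the hypotheses stated. I expect no serious obstacle beyond this bookkeeping.
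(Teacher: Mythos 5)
Your proposal is correct and follows essentially the same argument as the paper: splice the graded extensions via the shift-grading, observe that the product of $\Ext^n$ and $\Ext^1$ (in either order) lands in $\Ext^{n+1}_{\L}(\L_0,\L_0)_{\delta(n)+1}$, and note that this component vanishes since $P^{n+1}$ is generated in degree $\delta(n+1)\neq\delta(n)+1$, which is exactly the arithmetic conditions $D\neq A+1$ (for $n$ odd) and $A>1$ (for $n$ even). The paper likewise disposes of the $n=1$ endpoint by citing Proposition~\ref{1times1}, so your bookkeeping matches its proof.
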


\begin{proof}
The case $n=1$ follows from Proposition \ref{1times1}. Thus we may assume $n \geqslant 2$.
The projective module $P^n$ is generated in degree $\delta (n)$. So, since $\Ext^{n}_{\L} ( \L_0 , \L_0 )  \cong \Hom_\L (P^n, \L_0)$, each extension can be viewed as an exact sequence of graded modules of the form $$ 0 \to \L_0[-\delta(n)] \to E_{n} \to \cdots \to E_1 \to \L_0 \to 0.$$
Using the shift-grading, we can shift this sequence by $-1$ to obtain $$0 \to \L_0[-\delta(n)-1] \to E_{n}[-1]  \to \cdots \to E_1[-1] \to \L_0[-1] \to 0.$$
We can then splice this with an extension $0 \to \L_0[-1] \to E' \to \L_0 \to 0$ from
$\Ext^1_{\L}(\L_0 , \L_0)$ to obtain
$$0 \to \L_0[-\delta(n)-1] \to E_{n}[-1] \to \cdots \to E_1[-1] \to E' \to \L_0 \to 0.$$
Thus the image of $\Ext^n_{\L}(\L_0 , \L_0) \times \Ext^1_{\L}(\L_0 , \L_0 )$ lies in $\Ext^{n+1}_{\L}(\L_0, \L_0)_{\delta(n)+1}$.
However, the projective $P^{n+1}$ is generated in degree $\delta(n+1)$, so
$\Ext^{n+1}_{\L}(\L_0, \L_0) = \Ext^{n+1}_{\L}(\L_0, \L_0)_{\delta(n+1)}$.

If $n = 2r+1$ is odd, then $\delta(n+1) = \delta (2r+2) = (r+1)D$ and $\delta (n) +1 = \delta(2r+1)+1 = rD + A + 1$,
and since $D \neq A+1$, we have $\Ext^{n+1}_{\L}(\L_0, \L_0)_{\delta(n) + 1} = 0$.
Hence $\Ext^n_{\L}(\L_0 , \L_0) \times \Ext^1_{\L}(\L_0 , \L_0) = 0$ in this case.

On the other hand, if $n=2r$ is even with $r \geqslant 1$, then $\delta(n+1) = \delta (2r+1) = rD +A $
and $ \delta(n)+1 = \delta(2r) + 1 = rD + 1$.
Since $A > 1$, we have $\Ext^{n+1}_{\L}(\L_0, \L_0)_{\delta(n) + 1} = 0$, and
hence $\Ext^n_{\L}(\L_0 , \L_0) \times \Ext^1_{\L}(\L_0 , \L_0) = 0$.

The case for  $\Ext^{1}_{\L}(\L_0 , \L_0) \times \Ext^{n}_{\L}(\L_0 , \L_0) = 0$ is similar.
\end{proof}

\begin{prop}\label{odd}
Let $\Lambda$ be a $(D,A)$-stacked algebra with $D > 2, D \neq 2A$. Then
$$\Ext^{2m+1}_{\Lambda} (\Lambda_0 , \Lambda_0) \times\Ext^{2n+1}_{\Lambda} (\Lambda_0 , \Lambda_0) = 0$$
for all $m, n \geqslant 1.$
\end{prop}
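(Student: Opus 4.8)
The plan is to mirror the splicing argument used in Propositions~\ref{1times1} and~\ref{ones}, tracking the shift-grading of the Yoneda product and comparing it against the single degree in which the relevant projective is generated. The whole content of the proposition is the interaction of the two parity cases of $\delta$ with the hypothesis $D \neq 2A$.

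First I would fix $m, n \geqslant 1$ and set $a = 2m+1$, $b = 2n+1$. Since $P^a$ is generated in degree $\delta(a)$ and $\Ext^a_\L(\L_0,\L_0) \cong \Hom_\L(P^a, \L_0)$, each class in $\Ext^{2m+1}_\L(\L_0,\L_0)$ can be represented by an exact sequence of graded modules
$$0 \to \L_0[-\delta(a)] \to E_a \to \cdots \to E_1 \to \L_0 \to 0,$$
and similarly for a class in $\Ext^{2n+1}_\L(\L_0,\L_0)$. Next I would shift the sequence representing the $\Ext^{2m+1}$-class by $-\delta(b)$ and splice it onto the sequence representing the $\Ext^{2n+1}$-class, exactly as in the proofs above. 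This produces an exact sequence of length $a+b$ beginning at $\L_0[-\delta(a)-\delta(b)]$, so the image of the Yoneda product $\Ext^{2m+1}_\L(\L_0,\L_0) \times \Ext^{2n+1}_\L(\L_0,\L_0)$ lies in $\Ext^{a+b}_\L(\L_0,\L_0)_{\delta(a)+\delta(b)}$; equivalently, under the shift-grading the two shift-degrees $\delta(a)$ and $\delta(b)$ simply add.

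The remaining step is a numerical comparison. Here $a+b = 2(m+n+1)$ is even, so $P^{a+b}$ is generated in the single degree $\delta(2(m+n+1)) = (m+n+1)D$, whence $\Ext^{a+b}_\L(\L_0,\L_0) = \Ext^{a+b}_\L(\L_0,\L_0)_{(m+n+1)D}$. On the other hand, since $a$ and $b$ are odd with $m,n\geqslant 1$, we have $\delta(a)+\delta(b) = (mD+A)+(nD+A) = (m+n)D + 2A$, and the difference
$$(m+n+1)D - \big((m+n)D+2A\big) = D - 2A$$
is nonzero precisely because $D \neq 2A$. Therefore the graded component $\Ext^{a+b}_\L(\L_0,\L_0)_{\delta(a)+\delta(b)}$ vanishes, and the product is zero.

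I do not expect a genuine obstacle here: once the shift-grading framework of the preceding section is in place the argument is pure bookkeeping, and the only real input is the degree identity $\delta(2m+1)+\delta(2n+1) = (m+n)D + 2A$ together with $D \neq 2A$. The one point to be careful about is that the Yoneda product lands in shift-degree $\delta(a)+\delta(b)$ rather than in $\delta(a+b)$; it is exactly the gap $D-2A$ between these two values that forces the target graded piece to vanish.
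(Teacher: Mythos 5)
Your proposal is correct and follows essentially the same argument as the paper: represent each class by a graded exact sequence, shift one factor, splice, observe that the product lands in shift-degree $\delta(2m+1)+\delta(2n+1) = (m+n)D+2A$, and note that $P^{2(m+n+1)}$ is generated in the single degree $(m+n+1)D$, so this graded piece vanishes precisely because $D \neq 2A$. The only cosmetic difference is which of the two sequences you shift before splicing (the paper shifts the $\Ext^{2n+1}$ representative by $-\delta(2m+1)$), and since the statement is symmetric in $m$ and $n$ this is immaterial.
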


\begin{proof}
Let $m \geqslant 1, n \geqslant 1$. The projective modules $P^{2m+1}$ and $P^{2n+1}$ are generated in degrees
$\delta(2m+1)$ and $\delta(2n+1)$, respectively. So an extension in $\Ext^{2m+1}_{\Lambda}(\Lambda_0 , \Lambda_0)$ can be viewed
as an exact sequence of graded modules
\begin{equation}\label{P9}
0\rightarrow \Lambda_0[-\delta(2m+1)] \rightarrow E_{2m+1} \to \cdots \to E_1 \rightarrow \Lambda_0 \rightarrow 0
\end{equation}
and an extension in $\Ext^{2n+1}_{\Lambda}(\Lambda_0, \Lambda_0)$ can be viewed as an exact sequence of graded modules
\begin{equation}\label{P10}
0\rightarrow \Lambda_0[-\delta(2n+1)]\rightarrow  E'_{2n+1} \to \cdots \to E'_1 \rightarrow \Lambda_0 \rightarrow 0.
\end{equation}
Shifting the sequence \eqref{P10} by $-\delta(2m+1)$ and then
splicing with \eqref{P9}, gives
\begin{multline*}
0 \to \Lambda_0 [-\delta(2m+1)- \delta(2n+1)]\to  E'_{2n+1}[-\delta(2m+1)]\to \cdots \to  E'_1[-\delta(2m+1)]\\
\to E_{2m+1} \to \cdots \to E_1 \to \L_0 \to 0
\end{multline*}
which is in $\Ext^{2m+2n+2}_{\Lambda}(\Lambda_0, \Lambda_0)_{\delta(2m+1) + \delta(2n+1)}$.
However, $P^{2(m+n+1)}$ is generated in degree $\delta (2(m+n+1)) = (m+n+1)D$, whereas
$\delta(2m+1) + \delta(2n+1) = (m+n)D + 2A$. Since $D \neq 2A$, we have $\Ext^{2(m+n+1)}_{\L} (\L_0, \L_0 )_{\delta(2m+1) + \delta(2n+1)} = 0$, so $\Ext^{2m+1}_{\Lambda}(\Lambda_0 , \Lambda_0)\times \Ext^{2n+1}_{\Lambda}(\Lambda_0 , \Lambda_0)= 0$.
\end{proof}

We summarise Propositions~\ref{1times1}, \ref{ones} and \ref{odd} in the following result.

\begin{thm}\label{summary}
Let $\L$ be a $(D,A)$-stacked algebra with $D > 2$. Then
\begin{enumerate}[(i)]
\item $\Ext^{1}_{\L}(\L_0, \L_0) \times \Ext^{1}_{\L}(\L_0, \L_0)= 0$;
\item if $D \neq A+1$, then $\Ext^{n}_{\L}(\L_0, \L_0) \times \Ext^{1}_{\L}(\L_0, \L_0)= 0 =
\Ext^{1}_{\L}(\L_0, \L_0) \times \Ext^{n}_{\L}(\L_0, \L_0)$, for all $n$ odd, $n \geqslant 1$;
\item if $A > 1$, then $\Ext^{n}_{\L}(\L_0, \L_0) \times \Ext^{1}_{\L}(\L_0, \L_0)= 0 = \Ext^{1}_{\L}(\L_0, \L_0) \times \Ext^{n}_{\L}(\L_0, \L_0)$, for all $n$ even, $n \geqslant 2$;
\item if $D \neq 2A$, then $\Ext^{2m+1}_{\L}(\L_0, \L_0) \times \Ext^{2n+1}_{\L}(\L_0, \L_0)= 0$, for all $n,m \geqslant 1$.
\end{enumerate}
\end{thm}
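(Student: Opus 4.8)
The plan is to observe that Theorem~\ref{summary} is nothing more than a consolidation of the three preceding propositions, so that each of its four parts follows immediately from a result already proved. Concretely, I would match each clause to its source: part~(i) is exactly Proposition~\ref{1times1}; part~(ii) is Proposition~\ref{ones}(i); part~(iii) is Proposition~\ref{ones}(ii); and part~(iv) is Proposition~\ref{odd}. The only thing to check is that the standing hypothesis $D > 2$ together with the extra conditions recorded in each part ($D \neq A+1$ in (ii), $A > 1$ in (iii), $D \neq 2A$ in (iv)) coincides exactly with the hypotheses of the cited propositions, which they do. No new argument is therefore needed.

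Although the proof is purely a collation, it is worth recording the single mechanism underlying all three propositions, as this is what makes the summary coherent. In each case one represents classes in $\Ext^a_{\L}(\L_0,\L_0)$ and $\Ext^b_{\L}(\L_0,\L_0)$ by exact sequences of graded modules terminating in the shifts $\L_0[-\delta(a)]$ and $\L_0[-\delta(b)]$, shifts the second sequence by $-\delta(a)$ using the shift-grading, and splices. The resulting $(a+b)$-fold extension then lies in internal degree $\delta(a)+\delta(b)$, that is, in $\Ext^{a+b}_{\L}(\L_0,\L_0)_{\delta(a)+\delta(b)}$. Since $\L$ is $(D,A)$-stacked, the projective $P^{a+b}$ is generated in the single degree $\delta(a+b)$, so $\Ext^{a+b}_{\L}(\L_0,\L_0)$ is concentrated in internal degree $\delta(a+b)$; hence the product vanishes precisely when $\delta(a)+\delta(b) \neq \delta(a+b)$.

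The four parts then reduce to the elementary numerical comparisons already carried out: for (i), $\delta(1)+\delta(1) = 2 \neq D = \delta(2)$; for (ii) with $n=2r+1$ odd, $\delta(n)+\delta(1) = rD+A+1 \neq (r+1)D = \delta(n+1)$ exactly when $D \neq A+1$; for (iii) with $n=2r$ even, $\delta(n)+\delta(1) = rD+1 \neq rD+A = \delta(n+1)$ exactly when $A > 1$; and for (iv), $\delta(2m+1)+\delta(2n+1) = (m+n)D+2A \neq (m+n+1)D = \delta(2(m+n+1))$ exactly when $D \neq 2A$. These are precisely the inequalities assumed in the statement, so every product in question is forced into a vanishing internal degree. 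There is essentially no obstacle here; the only point requiring care is the bookkeeping of which parities of $n$ and which inequalities among $D$ and $A$ are invoked in each part, and this has already been handled in Propositions~\ref{1times1}, \ref{ones} and~\ref{odd}.
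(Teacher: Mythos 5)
Your proposal is correct and matches the paper exactly: the paper offers no separate proof of Theorem~\ref{summary}, stating only that it summarises Propositions~\ref{1times1}, \ref{ones} and \ref{odd}, which is precisely your collation, and your recap of the shift-and-splice mechanism with the degree comparisons $2 \neq D$, $rD+A+1 \neq (r+1)D$, $rD+1 \neq rD+A$ and $(m+n)D+2A \neq (m+n+1)D$ faithfully reproduces the arguments of those propositions.
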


We now use Theorems~\ref{genin} and \ref{summary} to give the following characterisation of $(D,A)$-stacked algebras. This uses the characterisation of $D$-Koszul algebras from \cite[Theorem~ 4.1]{bib:GMMZ}.

\begin{thm}\label{charac}
Let $\L = K\cQ / I$ where $I$ is generated by homogeneous elements of length $D \geqslant 2$. Then $\L = \L_0 \p \L_1 \p \cdots$ is length graded. Suppose, in the minimal projective resolution $(P^n , d^n)$ of $\L_0$ that $P^3$ is generated in a single degree, $D + A$, for $A \geqslant 1$. Then $\L$ is a $(D,A)$-stacked algebra if and only if $E(\L)$ is generated in degrees $0, 1, 2$ and $3$ and the following conditions hold:
\begin{enumerate}[(i)]
\item if $D > 2$ then $\Ext^1_{\L} (\L_0 , \L_0) \times \Ext^1_{\L} (\L_0 , \L_0) = 0$;
\item if $D > 2, D \neq A+1$, then $\Ext^{n}_{\L} (\L_0 , \L_0) \times \Ext^{1}_{\L} (\L_0 , \L_0) = 0 =  \Ext^{1}_{\L} (\L_0 , \L_0) \times \Ext^{n}_{\L} (\L_0 , \L_0)$, for all $n$ odd, $n \geqslant 1$;
\item if $D > 2, A > 1$, then $\Ext^{n}_{\L} (\L_0 , \L_0) \times \Ext^{1}_{\L} (\L_0 , \L_0) = 0 = \Ext^{1}_{\L} (\L_0 , \L_0) \times \Ext^{n}_{\L} (\L_0 , \L_0)$, for all $n$ even, $n \geqslant 2$;
\item if $D >2, D \neq 2A$, then $\Ext^{2m+1}_{\L} (\L_0 , \L_0) \times \Ext^{2n+1}_{\L} (\L_0 , \L_0) = 0$, for all $m, n \geqslant 1.$
\end{enumerate}
\end{thm}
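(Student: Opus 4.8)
The forward implication is immediate from what is already established: if $\L$ is $(D,A)$-stacked then Theorem~\ref{genin} gives that $E(\L)$ is generated in degrees $0,1,2$ and $3$, while Theorem~\ref{summary} gives precisely conditions (i)--(iv). So all the work is in the converse, and I would phrase it through the shift-grading: recall that $P^n$ being generated in the single degree $\delta(n)$ is equivalent to $\Ext^n_\L(\L_0,\L_0)$ being concentrated there, i.e. $\Ext^n_\L(\L_0,\L_0) = \Ext^n_\L(\L_0,\L_0)_{\delta(n)}$. The low-degree cases are in hand: since $I$ is length homogeneous of length $D$, the modules $P^0, P^1, P^2$ are generated in degrees $0, 1, D$, which are $\delta(0), \delta(1), \delta(2)$, and $P^3$ is generated in degree $D+A = \delta(3)$ by hypothesis.

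The plan for the converse is an induction on $n$ proving $\Ext^n_\L(\L_0,\L_0) = \Ext^n_\L(\L_0,\L_0)_{\delta(n)}$. Assume this for all $m < n$, with $n \geqslant 4$. Since $E(\L)$ is generated in degrees $0,1,2,3$, peeling off a last generating factor gives
$$\Ext^n_\L(\L_0,\L_0) = \sum_{k=1}^{3} \Ext^{n-k}_\L(\L_0,\L_0)\cdot \Ext^{k}_\L(\L_0,\L_0).$$
By the inductive hypothesis and the multiplicativity of the shift-grading (a product of elements in shift-degrees $i$ and $j$ lies in shift-degree $i+j$), each summand lies in $\Ext^n_\L(\L_0,\L_0)_{\delta(n-k)+\delta(k)}$. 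It therefore suffices to show that whenever $\delta(n-k)+\delta(k) \neq \delta(n)$ the corresponding product vanishes; the surviving summands then all sit in shift-degree $\delta(n)$, forcing $\Ext^n_\L(\L_0,\L_0) = \Ext^n_\L(\L_0,\L_0)_{\delta(n)}$.

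The heart of the argument is the elementary comparison of $\delta(n-k)+\delta(k)$ with $\delta(n)$ for $k=1,2,3$. For $k=2$ one checks directly that $\delta(n-2)+\delta(2)=\delta(n)$ for every $n$, so those products are always harmless. For $k=1$ and $k=3$ the two quantities agree only in borderline situations, and in exactly the complementary situations one of (i)--(iv) supplies the vanishing: for $k=1$ with $n$ even, equality holds iff $D=A+1$, and otherwise condition (ii) (with $n-1$ odd) applies; for $k=1$ with $n$ odd, equality holds iff $A=1$, and otherwise condition (iii) applies; for $k=3$ with $n$ odd, equality always holds; for $k=3$ with $n$ even, equality holds iff $D=2A$, and otherwise condition (iv) applies when $n\geqslant 6$ (both factors odd and $\geqslant 3$), while the boundary case $n=4$ reduces to $\Ext^1\cdot\Ext^3$ and is governed by condition (ii). The decisive point I would check with care is the complementary observation: whenever the hypothesis of the relevant condition fails (for instance $D=A+1$, or $A=1$, or $D=2A$) the product automatically lands in degree $\delta(n)$, so its possible non-vanishing does no harm; and these degenerate relations cannot clash for $D>2$ (e.g. $D=A+1$ and $D=2A$ together force $D=2$).

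I expect the main obstacle to be exactly this bookkeeping --- ensuring the case analysis is exhaustive and that the hypotheses attached to (i)--(iv) line up precisely with the borderline degrees --- together with the degenerate case $D=2$, where all of (i)--(iv) are vacuous. There I would use that the generation degrees in a minimal resolution are strictly increasing, so that a genuine $(2,A)$-stacked structure forces $A=1$; this is the Koszul case, classically characterised by $E(\L)$ being generated in degrees $0$ and $1$. For $A=1$ with $D>2$ the algebra is $D$-Koszul and one can appeal directly to \cite[Theorem~4.1]{bib:GMMZ}, while for $A>1$ the induction above is the new content, completing the proof that $P^n$ is generated in degree $\delta(n)$ for all $n$ and hence that $\L$ is $(D,A)$-stacked.
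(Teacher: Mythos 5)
Your proposal is correct and is essentially the paper's own proof: the forward direction quotes Theorems~\ref{genin} and \ref{summary}, and the converse in the main case $D>2$, $A>1$ is the same induction on the shift-grading, with the same borderline bookkeeping (a product into $\Ext^n_{\L}(\L_0,\L_0)$ lands in shift-degree $\delta(n)$ precisely in the degenerate situations $D=A+1$, $A=1$, $D=2A$, and is killed by conditions (ii)--(iv) otherwise); the only cosmetic difference is that you peel off a single last factor of degree $k\leqslant 3$, whereas the paper sums over all splittings $\Ext^{m}_{\L}(\L_0,\L_0)\times\Ext^{n-m}_{\L}(\L_0,\L_0)$. The one step you gloss over is in the degenerate cases: from generation in degrees $0,1,2,3$ one cannot ``appeal directly'' to the Koszul or $D$-Koszul characterisations --- one must first apply Proposition~\ref{sumof} (using $\delta(2)=\delta(1)+\delta(1)$ when $D=2$, and $\delta(3)=\delta(1)+\delta(2)$ when $A=1$) to obtain $\Ext^2_{\L}(\L_0,\L_0)=\Ext^1_{\L}(\L_0,\L_0)\times\Ext^1_{\L}(\L_0,\L_0)$, respectively $\Ext^3_{\L}(\L_0,\L_0)=\Ext^1_{\L}(\L_0,\L_0)\times\Ext^2_{\L}(\L_0,\L_0)$, and hence generation in degrees $0,1$ (resp.\ $0,1,2$), which is exactly what the paper's Cases 1 and 2 do before citing Koszulity, resp.\ \cite[Theorem 4.1]{bib:GMMZ}.
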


\begin{proof}
Suppose $\L = K\cQ / I$ is a $(D,A)$-stacked algebra. Then from Theorem~\ref{genin} we know that $E(\L)$ is generated in degrees $0, 1, 2$ and $3$. From Theorem~\ref{summary} we know that conditions (i), (ii), (iii) and (iv) hold.

To show the other direction, we consider 3 cases.

\textbf{Case 1: $D = 2, A = 1.$}

Assume $I$ is generated by homogeneous elements of length $2$, $P^3$ is generated in degree $3$ and $E(\L)$ is generated in degrees $0, 1, 2$ and $3$. We know that $P^0$ is generated in degree $0$ and $P^1$ is generated in degree $1$, and, since $I$ is quadratic, we also have that $P^2$ is generated in degree $2$. By Proposition~\ref{sumof} with $\a = 1, \b = 1$ we have $\Ext^1_{\L} (\L_0, \L_0) \times \Ext^1_{\L} (\L_0, \L_0) = \Ext^2_{\L} (\L_0, \L_0)$, and, with $\a=1, \b=2$ we have $\Ext^1_{\L} (\L_0, \L_0) \times \Ext^2_{\L} (\L_0, \L_0) = \Ext^3_{\L} (\L_0, \L_0)$. Therefore $E(\L)$ is generated in degrees $0$ and $1$. Hence $\L$ is Koszul and therefore a $(2,1)$-stacked algebra.

\textbf{Case 2: $D > 2, A = 1.$}

Assume $I$ is generated by homogeneous elements of length $D$, $P^3$ is generated in degree $D+1$ and $E(\L)$ is generated in degrees $0, 1, 2$ and $3$. We know that $P^0$ is generated in degree $0$, $P^1$ is generated in degree $1$ and $P^2$ is generated in degree $D$. By Proposition~\ref{sumof} with $\a=1, \b=2$ we have $\Ext^1_{\L} (\L_0, \L_0) \times \Ext^2_{\L} (\L_0, \L_0) = \Ext^3_{\L} (\L_0, \L_0)$. Therefore $E(\L)$ is generated in degrees $0, 1$ and $2$ and by \cite[Theorem 4.1]{bib:GMMZ} $\L$ is $D$-Koszul. Hence $\L$ is a $(D,1)$-stacked algebra.

\textbf{Case 3: $D > 2, A > 1.$}

Suppose that $P^3$ is generated in degree $D+A$ and $E(\L)$ is generated in degrees $0, 1, 2$ and $3$, with conditions (i), (ii), (iii) and (iv) holding. We know that $\Ext^{0}_{\L}(\L_0, \L_0) = \Ext^{0}_{\L}(\L_0, \L_0)_{0}$ and $\Ext^{1}_{\L}(\L_0, \L_0) = \Ext^{1}_{\L}(\L_0, \L_0)_{1}$ since $P^0$ and $P^1$ are generated in degrees $0$ and $1$ respectively.
By hypothesis, we have $\Ext^{2}_{\L}(\L_0, \L_0) = \Ext^{2}_{\L}(\L_0, \L_0)_{D}$ and $\Ext^{3}_{\L}(\L_0, \L_0) = \Ext^{3}_{\L}(\L_0, \L_0)_{D+A}$.

We continue by induction to show that $\Ext^{n}_{\L}(\L_0 , \L_0) = \Ext^{n}_{\L}(\L_0 , \L_0)_{\delta(n)}$ for $n \geqslant 4$, where $\delta$ is as in Definition~\ref{d,a}.
We assume, for $m  < n$, that $\Ext^{m}_{\L}(\L_0 , \L_0) = \Ext^{m}_{\L}(\L_0 , \L_0)_{\delta(m)}$ and $P^m$ is generated in degree $\delta(m)$. We have
\begin{align*}
\Ext^n_{\L} (\L_0, \L_0) & = \Ext^{1}_{\L} (\L_0 , \L_0) \times \Ext^{n-1}_{\L} (\L_0 , \L_0)+ \Ext^{2}_{\L} (\L_0 , \L_0) \times \Ext^{n-2}_{\L} (\L_0 , \L_0)\\
& + \cdots + \Ext^{m}_{\L} (\L_0 , \L_0) \times \Ext^{n-m}_{\L} (\L_0 , \L_0)+ \cdots + \\
& \Ext^{n-2}_{\L} (\L_0 , \L_0)  \times \Ext^{2}_{\L} (\L_0 , \L_0) + \Ext^{n-1}_{\L} (\L_0 , \L_0) \times \Ext^{1}_{\L} (\L_0, \L_0).
\end{align*}

We consider the cases $n$ even and $n$ odd separately.

First we suppose that $n$ is even, $n \geqslant 4$ and consider $\Ext^{m}_{\L}(\L_0 , \L_0) \times \Ext^{n-m}_{\L} (\L_0 , \L_0)$.
If $m$ is even then $n-m$ is even, $P^m$ is generated in degree $\frac{m}{2}D$ and $P^{n-m}$ is generated in degree $\frac{(n-m)}{2}D$.
An element of $\Ext^{m}_{\L}(\L_0, \L_0)$ can be viewed as an exact sequence
\begin{equation}\label{25}
0 \to \L_0[-{\scriptstyle\frac{m}{2}}D ] \to E_{m} \to \cdots \to E_1 \to \L_0 \to 0
\end{equation}
and an element of $\Ext^{n-m}_{\L}(\L_0, \L_0)$ can be viewed as an exact sequence
\begin{equation}\label{26}
0 \to \L_0[-{\scriptstyle\frac{(n-m)}{2}}D] \to E'_{n-m} \to \cdots \to E'_1 \to \L_0 \to 0.
\end{equation}
Shifting \eqref{25} by $-{\scriptstyle\frac{(n-m)}{2}}D$ and then splicing the resulting sequence with \eqref{26} gives
\begin{multline*}
0 \to \L_0[-{\scriptstyle\frac{n}{2}}D ] \to E_m[-{\scriptstyle\frac{(n-m)}{2}}D] \to \cdots \to E_1[-{\scriptstyle\frac{(n-m)}{2}}D] \to \\
E'_{n-m} \to \cdots \to E'_1 \to \L_0 \to 0.$$
\end{multline*}
Thus the image of $ \Ext^{m}_{\L}(\L_0, \L_0) \times \Ext^{n-m}_{\L}(\L_0, \L_0)$ is contained in $\Ext^{n}_{\L}(\L_0, \L_0)_{\frac{n}{2}D}$.

Now suppose $m$ is odd with $m \geqslant 3$ and $n-m \geqslant 3$; then $n-m$ is also odd, $P^m$ is generated in degree $\frac{(m-1)}{2}D + A$ and $P^{n-m}$
is generated in degree $\frac{(n-m-1)}{2}D + A$. An element of $\Ext^{m}_{\L}(\L_0, \L_0)$ can be viewed as an exact sequence
\begin{equation}\label{20}
0 \to \L_0[-{\scriptstyle\frac{(m-1)}{2}}D - A] \to E_{m} \to \cdots \to E_1 \to \L_0 \to 0
\end{equation}
and an element of $\Ext^{n-m}_{\L} (\L_0 , \L_0)$ as an exact sequence
\begin{equation}\label{21}
0 \to \L_0[-{\scriptstyle\frac{(n-m-1)}{2}}D - A] \to E'_{n-m} \to \cdots \to E'_1 \to \L_0 \to 0.
\end{equation}
Shifting \eqref{20} by $-{\scriptstyle\frac{(n-m-1)}{2}}D - A$ and splicing the resulting sequence with \eqref{21} gives
\begin{multline*}
0 \to \L_0[-{\scriptstyle\frac{(n-2)}{2}}D - 2A] \to E_{m}[-{\scriptstyle\frac{(n-m-1)}{2}}D - A] \to \cdots \to E_1[-{\scriptstyle\frac{(n-m-1)}{2}}D - A] \to \\
E'_{n-m} \to \cdots \to E'_1 \to \L_0 \to 0.
\end{multline*}
Thus the image of  $ \Ext^{m}_{\L} (\L_0 , \L_0) \times \Ext^{n-m}_{\L} (\L_0 , \L_0)$ is contained in $ \Ext^{n}_{\L}(\L_0, \L_0)_{\frac{(n-2)}{2}D + 2A}.$

If $m = 1$ or $m = n-1$, then a similar argument shows that the images of $\Ext^{1}_{\L}(\L_0, \L_0) \times \Ext^{n-1}_{\L}(\L_0, \L_0)$ and $\Ext^{n-1}_{\L}(\L_0, \L_0) \times \Ext^{1}_{\L}(\L_0, \L_0)$ are both contained in $\Ext^{n}_{\L}(\L_0, \L_0)_{\frac{(n-2)}{2}D + A + 1}$.

We now use conditions (ii) and (iv) and show that some products are necessarily zero.
\begin{enumerate}
\item[$\bullet$] Suppose $D = 2A$. Then $D \neq A+ 1$, so by (ii), we have $\Ext^{1}_{\L}(\L_0, \L_0) \times \Ext^{n-1}_{\L}(\L_0, \L_0)$ $= 0 = \Ext^{n-1}_{\L}(\L_0, \L_0) \times \Ext^{1}_{\L}(\L_0, \L_0)$. The image of $\Ext^{m}_{\L}(\L_0, \L_0) \times \Ext^{n-m}_{\L}(\L_0, \L_0)$ is contained in $\Ext^{n}_{\L}(\L_0, \L_0)_{\frac{n}{2}D}$ for $m$ even and in $\Ext^{n}_{\L}(\L_0, \L_0)_{\frac{(n-2)}{2}D + 2A}$ for $m$ odd, $m \geqslant 3$. Since $D = 2A$, we have that the image of $\Ext^{m}_{\L}(\L_0, \L_0) \times \Ext^{n-m}_{\L}(\L_0, \L_0)$ is contained in $\Ext^{n}_{\L}(\L_0, \L_0)_{\frac{n}{2}D}$ for all $m \geqslant 1$. Hence $\Ext^{n}_{\L}(\L_0, \L_0)$ $= \Ext^{n}_{\L}(\L_0, \L_0)_{\frac{n}{2}D}$.
 \item[$\bullet$] Suppose $D \neq 2A, D = A + 1$. By (iv), $\Ext^{m}_{\L}(\L_0, \L_0) \times \Ext^{n-m}_{\L}(\L_0, \L_0) = 0$ for $m$ odd, $m \geqslant 3$. The images of $\Ext^{1}_{\L}(\L_0, \L_0) \times \Ext^{n-1}_{\L}(\L_0, \L_0)$ and $\Ext^{1}_{\L} (\L_0, \L_0) \times \Ext^{n-1}_{\L}(\L_0, \L_0)$ are contained  in $\Ext^{n}_{\L}(\L_0, \L_0)_{\frac{(n-2)}{2}D + A+ 1} = \Ext^{1}_{\L} (\L_0 , \L_0)_{\frac{n}{2}D}$, since $D = A + 1$. For $m$ even, the image of $\Ext^{m}_{\L}(\L_0, \L_0) \times \Ext^{n-m}_{\L}(\L_0, \L_0)$ is contained in $\Ext^{n}_{\L}(\L_0, \L_0)_{\frac{n}{2}D}$. Hence $\Ext^{n}_{\L} (\L_0 , \L_0) = \Ext^{n}_{\L} (\L_0 , \L_0)_{\frac{n}{2}D}$.
 \item[$\bullet$] Suppose $D \neq 2A, D \neq A + 1$. By (ii) and (iv), we have $\Ext^{m}_{\L}(\L_0, \L_0) \times \Ext^{n-m}_{\L} (\L_0 , \L_0)$ $= 0$, for $m$ odd, $m \geqslant 1$. The image of $\Ext^{m}_{\L}(\L_0, \L_0) \times \Ext^{n-m}_{\L}(\L_0, \L_0)$ is contained in $\Ext^{n}_{\L}(\L_0, \L_0)_{\frac{n}{2}D}$, for $m$ even. So $\Ext^{n}_{\L}(\L_0, \L_0) = \Ext^{n}_{\L}(\L_0, \L_0)_{\frac{n}{2}D}$.
\end{enumerate}
Hence for $n$ even, we have that $\Ext^{n}_{\L}(\L_0, \L_0) = \Ext^{n}_{\L}(\L_0, \L_0)_{\frac{n}{2}D}$. Thus $P^n$ is generated in degree $\frac{n}{2}D = \delta(n)$.

\medskip

Now suppose that $n$ is odd with $n \geqslant 5$. Then $n-1$ is even so, by (iii), we have that
$\Ext^{1}_{\L}(\L_0, \L_0) \times \Ext^{n-1}_{\L}(\L_0, \L_0) = 0 = \Ext^{n-1}_{\L}(\L_0, \L_0) \times
\Ext^{1}_{\L}(\L_0, \L_0)$. We now consider $\Ext^{m}_{\L}(\L_0, \L_0) \times \Ext^{n-m}_{\L}(\L_0, \L_0)$ for $m, n-m \geqslant 2$.
If $m$ is odd, then $n-m$ is even, $P^m$ is generated in degree $\frac{(m-1)}{2}D + A$ and $P^{n-m}$ is generated in
degree $\frac{(n-m)}{2}D$.
An element of $\Ext^{m}_{\L} (\L_0 , \L_0)$ has the form
\begin{equation}\label{16}
0 \to \L_0[-{\scriptstyle\frac{(m-1)}{2}}D - A] \to E_m \to \cdots \to E_1 \to \L_0 \to 0
\end{equation}
and an element of $\Ext^{n-m}_{\L} (\L_0 , \L_0)$ has the form
\begin{equation}\label{17}
0 \to \L_0[-{\scriptstyle\frac{(n-m)}{2}}D] \to E'_{n-m} \to \cdots \to E'_1 \to \L_0 \to 0.
\end{equation}
Shifting \eqref{16} by $-\frac{(n-m)}{2}D - A$ and splicing with \eqref{17} we obtain
\begin{multline*}
0 \to \L_0[-{\scriptstyle\frac{(n-1)}{2}}D - A] \to E_m[-{\scriptstyle\frac{(n-m)}{2}}D - A] \to \cdots E_1[-{\scriptstyle\frac{(n-m)}{2}}D - A] \to \\
E'_{n-m} \to \cdots \to E'_1 \to \L_0 \to 0.
\end{multline*}
Thus the image of $\Ext^{m}_{\L}(\L_0 ,\L_0) \times \Ext^{n-m}_{\L}(\L_0, \L_0)$ is contained in
$\Ext^{n}_{\L}(\L_0, \L_0)_{\frac{(n-1)}{2}D + A}$.

\sloppy If $m$ is even then $n-m$ is odd and a similar argument gives that
$\Ext^{m}_{\L}(\L_0, \L_0) \times \Ext^{n-m}_{\L}(\L_0, \L_0)$ is also contained in
$\Ext^{n}_{\L}(\L_0, \L_0)_{\frac{(n-1)}{2}D + A}$. Thus $\Ext^{n}_{\L}(\L_0, \L_0) = \Ext^{n}_{\L}(\L_0, \L_0)_{\frac{(n-1)}{2}D+A}$ and hence $P^n$ is generated in degree $\frac{(n-1)}{2}D + A = \delta(n)$.

Thus, for all $n \geqslant 0$, $P^n$ is generated in degree $\delta(n)$, where $\delta$ is as in Definition~\ref{d,a},
and hence $\L$ is a $(D,A)$-stacked algebra.
\end{proof}

\section{Regrading of the Ext Algebra}\label{regrad}

Under Koszul duality, the Ext algebra of a Koszul algebra is again a
Koszul algebra. For $D$-Koszul algebras, a regrading of the Ext algebra was introduced in \cite[Section 7]{bib:GMMZ}, called the hat-grading, in which the
regraded Ext algebra of a $D$-Koszul algebra is a Koszul algebra. Specifically, given a $D$-Koszul algebra $\L$, Green, Marcos, Mart\'inez-Villa and Zhang define the hat-grading of the Ext algebra $\hat{E}(\L)  = \p_{n \geqslant 0} \hat{E}(\L)_n$ by
$$\begin{array}{rcl}
 \hat{E}(\L)_0 & = & \Ext^0_{\Lambda} (\Lambda_0 , \Lambda_0)\\
 \hat{E}(\L)_1 & = & \Ext^1_{\Lambda} (\Lambda_0 , \Lambda_0)\p \Ext^2_{\Lambda} (\Lambda_0 , \Lambda_0)  \\
 \hat{E}(\L)_n & = & \Ext^{2n-1}_{\Lambda} (\Lambda_0 , \Lambda_0) \p \Ext^{2n}_{\Lambda} (\Lambda_0 , \Lambda_0) \quad \mbox{ for } n \geqslant 2,
\end{array}$$
and show in \cite[Theorem 7.1]{bib:GMMZ}, that $\hat{E}(\L)$ is a Koszul algebra with this regrading.
Recall that a $(D,A)$-stacked algebra which is neither Koszul nor $D$-Koszul necessarily has $D > 2$ and $A > 1$.
In this section, we define a grading on a $(D,A)$-stacked algebra with $D > 2, A > 1$, $D \neq 2A, D \neq A+1$ which we also call the hat-degree grading.
This will be used in Section~\ref{sec:extmonomial}, to show (with these conditions on $D, A$) that the regraded Ext algebra of a $(D,A)$-stacked monomial algebra is a Koszul algebra.

\begin{defin}\label{regrading}
Let $\L = K\cQ / I$ be a $(D,A)$-stacked algebra, with $D > 2 , A > 1$, $D \neq 2A$ and $D \neq A+1$. We define the hat-degree grading on the Ext algebra of $\Lambda$ by
$$\begin{array}{rcl}
\hat{E}(\L)_0 & = & \Ext^0_{\Lambda} (\Lambda_0 , \Lambda_0)\\
\hat{E}(\L)_1 & = & \Ext^1_{\Lambda} (\Lambda_0 , \Lambda_0)\p \Ext^2_{\Lambda} (\Lambda_0 , \Lambda_0) \p \Ext^3_{\Lambda} (\Lambda_0 , \Lambda_0) \\
\hat{E}(\L)_n & = & \Ext^{2n}_{\Lambda} (\Lambda_0 , \Lambda_0) \p \Ext^{2n+1}_{\Lambda} (\Lambda_0 , \Lambda_0) \quad \mbox{ for } n \geqslant 2.
\end{array}$$
Let $\hat{E}(\L) = \p_{n \geqslant 0} \hat{E}(\L)_n.$
\end{defin}

First we show that the hat-degree does indeed give a well defined grading.

\begin{thm}
Let $\Lambda$ be a $(D,A)$-stacked algebra with $D >2, A > 1$, $D \neq 2A, D \neq A+1$.  Then the Ext algebra $\hat{E}(\Lambda)$ is a graded algebra with the hat-degree grading of Definition~\ref{regrading}.
\end{thm}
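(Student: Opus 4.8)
The plan is to show directly that the Yoneda product respects the hat-degree decomposition, that is, $\hat{E}(\L)_m \cdot \hat{E}(\L)_n \subseteq \hat{E}(\L)_{m+n}$ for all $m,n \geqslant 0$. As a graded vector space, $\hat{E}(\L) = \bigoplus_{n \geqslant 0}\hat{E}(\L)_n$ is simply a regrouping of the homological grading $E(\L) = \bigoplus_{k \geqslant 0}\Ext^k_\L(\L_0,\L_0)$, with each $\Ext^k_\L(\L_0,\L_0)$ lying in a unique hat-degree and the identity lying in $\hat{E}(\L)_0 = \Ext^0_\L(\L_0,\L_0)$; so only multiplicative compatibility needs proof. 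Since each $\hat{E}(\L)_m$ is a finite direct sum of the homological pieces $\Ext^k_\L(\L_0,\L_0)$, it is enough to check the containment on summands, and I would fix homogeneous $a \in \Ext^i_\L(\L_0,\L_0)$ and $b \in \Ext^j_\L(\L_0,\L_0)$ and track where the product $ab \in \Ext^{i+j}_\L(\L_0,\L_0)$ lands.

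The key observation is that the rule sending a homological degree to its hat-degree is $k \mapsto \lfloor k/2\rfloor$ for $k \geqslant 2$, together with the exceptional values $0 \mapsto 0$ and $1 \mapsto 1$, and that this rule is \emph{not} additive: it overshoots by $1$ exactly when both arguments are odd and at least $3$, and it behaves anomalously on the value $1$. This is where the hypotheses enter. Because $D > 2$, $A > 1$, $D \neq 2A$ and $D \neq A+1$, all four parts of Theorem~\ref{summary} apply; in particular (ii) and (iii) together yield $\Ext^n_\L(\L_0,\L_0) \times \Ext^1_\L(\L_0,\L_0) = 0 = \Ext^1_\L(\L_0,\L_0) \times \Ext^n_\L(\L_0,\L_0)$ for every $n \geqslant 1$, and (iv) yields $\Ext^{2p+1}_\L(\L_0,\L_0) \times \Ext^{2q+1}_\L(\L_0,\L_0) = 0$ for all $p,q \geqslant 1$.

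With these at hand I would run the case analysis. A factor in $\Ext^0_\L(\L_0,\L_0) \cong \L_0$ acts through idempotents and preserves every component. If one factor lies in $\Ext^1_\L(\L_0,\L_0)$ and the other in any positive degree, the product is zero and lies in $\hat{E}(\L)_{m+n}$ trivially. Otherwise both $i, j \geqslant 2$; writing each as $2p$ or $2p+1$, one checks that $\lfloor (i+j)/2\rfloor$ equals the sum of the two hat-degrees whenever $i, j$ are not both odd, so $ab$ sits in the correct component. The sole remaining case, namely $i, j$ both odd and at least $3$, is exactly the one where the hat-degree overshoots by $1$, and there $ab = 0$ by Theorem~\ref{summary}(iv). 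Thus in every case $ab$ lies in $\hat{E}(\L)_{m+n}$, giving the grading. The one delicate point, and the real content of the statement, is that the hat-degree rule is not additive, so the regrouping is multiplicative only because the non-additive pairs, namely both factors odd and at least $3$ together with those involving $\Ext^1_\L(\L_0,\L_0)$, are precisely those whose Yoneda product is forced to vanish; verifying that these exhaust the non-additive cases, which relies on $D \neq 2A$ and $D \neq A+1$ with $A > 1$, is the crux.
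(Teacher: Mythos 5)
Your proof is correct, and it is leaner than the paper's. Both arguments turn on the same key input --- the vanishing results of Theorem~\ref{summary}, all four parts of which apply under the stated hypotheses --- but you prove only the containment $\hat{E}(\L)_m \cdot \hat{E}(\L)_n \subseteq \hat{E}(\L)_{m+n}$, which is all a grading requires, whereas the paper proves the equality $\hat{E}(\L)_m \times \hat{E}(\L)_n = \hat{E}(\L)_{m+n}$. The equality needs an extra ingredient your argument never invokes, namely Proposition~\ref{sumof} (surjectivity of the Yoneda product when the generating degrees of the projectives add); in exchange it records the stronger fact that $\hat{E}(\L)$ is generated in hat-degrees $0$ and $1$, which is what underlies presenting $\hat{E}(\L)$ as a quotient of a path algebra with arrows in hat-degree $1$ in Section~\ref{sec:extmonomial}. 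Your organisation of the cases by homological degrees $(i,j)$, via the non-additivity of $k \mapsto \lfloor k/2 \rfloor$, is a transparent repackaging of the paper's case analysis by hat-degrees $(m,n)$. One small overstatement: the pairs $(1,j)$ with $j$ odd, $j \geqslant 3$, are in fact additive (both sides equal $1 + (j-1)/2$), so the non-additive pairs are not \emph{precisely} those on your list; but since those products vanish anyway by Theorem~\ref{summary}(ii), the argument is unaffected.
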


\begin{proof}
We need to show $\hat{E}(\L)_m \times \hat{E}(\L)_n = \hat{E}(\L)_{m+n}$ for all $m,n \geqslant 0$.
This is clearly true if $m=0$ or $n=0$.
In the case $m = n = 1$ and using Theorem~\ref{summary}, we have
\begin{multline*}
\hat{E}(\L)_1 \times \hat{E}(\L)_1 =
\Ext^2_{\L} ( \L_0 , \L_0 ) \times \Ext^2_{\L} ( \L_0 , \L_0 )\p \Ext^2_{\L} ( \L_0 , \L_0 ) \times \Ext^3_{\L} ( \L_0 , \L_0 ) \p \\ \Ext^3_{\L} ( \L_0 , \L_0 ) \times \Ext^2_{\L} ( \L_0 , \L_0 ).
\end{multline*}
Now, by Proposition~\ref{sumof}, $\Ext^2_{\L} ( \L_0 , \L_0 ) \times \Ext^2_{\L} ( \L_0 , \L_0 ) = \Ext^4_{\L} ( \L_0 , \L_0 )$ and
$\Ext^2_{\L} ( \L_0 , \L_0 ) \times \Ext^3_{\L} ( \L_0 , \L_0 ) = \Ext^5_{\L} ( \L_0 , \L_0 ) = \Ext^3_{\L} ( \L_0 , \L_0 )
\times \Ext^2_{\L} ( \L_0 , \L_0 )$.
Thus $\hat{E} (\L)_1 \times \hat{E} (\L)_1  = \hat{E} (\L)_2$.
Now let $m=1, n \geqslant 2$. Using Proposition~\ref{sumof}, we have
\begin{multline*}
\hat{E} (\L)_1 \times \hat{E} (\L)_n
= \Ext^2_{\L} ( \L_0 , \L_0 )\times  \Ext^{2n}_{\L} ( \L_0 , \L_0 ) \p \Ext^{2}_{\L} ( \L_0 , \L_0 ) \times  \Ext^{2n+1}_{\L} ( \L_0 , \L_0 ) + \\
\Ext^{3}_{\L} ( \L_0 , \L_0 )\times  \Ext^{2n}_{\L} ( \L_0 , \L_0 ).
\end{multline*}
Theorem~\ref{summary} then gives $\hat{E} (\L)_1 \times \hat{E}(\L)_n = \hat{E} (\L)_{n+1}.$
Similar arguments show that $\hat{E}_n (\L) \times \hat{E}_1 (\L) = \hat{E}_{n+1}(\L)$ for $n \geqslant 2$, and that
$\hat{E} (\L)_m \times \hat{E} (\L)_n  = \hat{E} (\L)_{m+n}$ for $m, n \geqslant 2$.
\end{proof}

We now discuss the case where $D=2A$ and show that there is no regrading of the Ext algebra if $\gldim \L \geqslant 6$.

\begin{prop}\label{NOregrading}
Let $\L$ be a $(D, A)$-stacked algebra, with $D=2A, A > 1$ and $\gldim \L \geqslant 6$. Then there is no regrading such that the Ext algebra of $\L$ is Koszul.
\end{prop}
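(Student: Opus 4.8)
The plan is to argue by contradiction: assuming a regrading makes $E(\L)$ Koszul, I will produce a single nonzero homogeneous element of $E(\L)$ that is forced to carry two different degrees. The crucial new feature of the case $D = 2A$ is a numerical coincidence among the generation degrees. Since $\delta(2) = D = 2A$, $\delta(3) = D+A = 3A$, $\delta(4) = 2D = 4A$ and $\delta(6) = 3D = 6A$, we have $\delta(6) = 2\delta(3) = \delta(2)+\delta(4) = 3\delta(2)$. Thus the equal-degrees hypothesis of Proposition~\ref{sumof} is met with $\alpha = \beta = 3$, as well as with $(\alpha,\beta) = (2,2)$ and $(2,4)$, and iterating the resulting surjections yields $\Ext^3_\L(\L_0,\L_0) \times \Ext^3_\L(\L_0,\L_0) = \Ext^6_\L(\L_0,\L_0) = \Ext^2_\L(\L_0,\L_0) \times \Ext^2_\L(\L_0,\L_0) \times \Ext^2_\L(\L_0,\L_0)$. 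Since $\gldim \L \geqslant 6$, the projective $P^6$ is nonzero, so $\Ext^6_\L(\L_0,\L_0) \neq 0$.

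Next I would record that $D = 2A > 2$ and $D = 2A \neq A+1$ (as $A > 1$), so Theorem~\ref{summary}(i) and (iii) give $\Ext^1 \times \Ext^1 = 0$ and $\Ext^1 \times \Ext^2 = 0 = \Ext^2 \times \Ext^1$. Writing $J = \bigoplus_{n \geqslant 1}\Ext^n_\L(\L_0,\L_0)$ for the augmentation ideal of $E(\L)$, these vanishing products show (since the homological-degree-$2$ and degree-$3$ components of $J^2$ are $\Ext^1\times\Ext^1$ and $\Ext^1\times\Ext^2 + \Ext^2\times\Ext^1$ respectively) that $\Ext^2 \cap J^2 = 0$ and $\Ext^3 \cap J^2 = 0$; equivalently, $\Ext^2$ and $\Ext^3$ consist entirely of indecomposable generators of $E(\L)$. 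Now suppose $E(\L) = \bigoplus_{k}\hat E_k$ is a regrading, with $\hat E_0 = \Ext^0$ and each $\hat E_k$ a sum of the pieces $\Ext^n$, for which $E(\L)$ is Koszul. Being Koszul, $E(\L)$ is generated in degree $1$, so $J/J^2 = \hat E_1$ and $J^2 = \bigoplus_{k\geqslant 2}\hat E_k$; combined with the previous sentence this forces $\Ext^2 \subseteq \hat E_1$ and $\Ext^3 \subseteq \hat E_1$.

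The contradiction now falls out of the two factorisations of $\Ext^6$: on one hand $\Ext^6 = \Ext^3 \times \Ext^3 \subseteq \hat E_1 \cdot \hat E_1 \subseteq \hat E_2$, while on the other $\Ext^6 = \Ext^2 \times \Ext^2 \times \Ext^2 \subseteq \hat E_1 \cdot \hat E_1 \cdot \hat E_1 \subseteq \hat E_3$. Hence $\Ext^6 \subseteq \hat E_2 \cap \hat E_3 = 0$, contradicting $\Ext^6 \neq 0$, so no such regrading exists. I expect the main obstacle to lie not in the final degree count but in the step $\Ext^3 \times \Ext^3 = \Ext^6 \neq 0$: this is exactly the odd-by-odd product that Theorem~\ref{summary}(iv) annihilates when $D \neq 2A$, so the entire proposition hinges on observing that dropping the hypothesis $D \neq 2A$ turns this previously-vanishing product into a surjection onto $\Ext^6$ via Proposition~\ref{sumof}. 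A secondary point to make careful is the meaning of \emph{regrading}: I use, as in Definition~\ref{regrading}, that a regrading is a grading whose graded pieces are sums of the $\Ext^n$, so that each $\Ext^n$ lies in a single $\hat E_k$ and the intersection $\hat E_2 \cap \hat E_3$ above is genuinely zero.
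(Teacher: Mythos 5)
Your proof is correct and takes essentially the same route as the paper: both exploit the coincidence $\delta(6)=2\delta(3)=3\delta(2)$ when $D=2A$ to factor $\Ext^6_\L(\L_0,\L_0)\neq 0$ through Proposition~\ref{sumof} both as $\Ext^3_\L(\L_0,\L_0)\times\Ext^3_\L(\L_0,\L_0)$ and as a triple product of $\Ext^2_\L(\L_0,\L_0)$, force $\Ext^2_\L(\L_0,\L_0)$ and $\Ext^3_\L(\L_0,\L_0)$ into $\hat{E}_1$, and conclude that a nonzero element would have to lie in $\hat{E}_2\cap\hat{E}_3=0$. Your explicit argument via $J^2$ that $\Ext^2_\L(\L_0,\L_0)$ and $\Ext^3_\L(\L_0,\L_0)$ consist of indecomposable generators merely fills in a step the paper states more tersely.
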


\begin{proof}
For $E(\L)$ to be Koszul we would need a hat-degree such that $\hat{E}(\L)$ is generated in degrees $0$ and $1$. From Theorem \ref{genin}, we know that $E(\L)$ is generated in degrees $0, 1, 2, 3$. Now, $\Ext^1_{\L} (\L_0, \L_0) \times \Ext^2_{\L} (\L_0, \L_0) =0$ from Theorem~\ref{summary}(iii), so $E(\L)$ cannot be generated in degrees $0, 1$ and $2$. So we may assume that $\hat{E}_0 (\L) = \Ext^0_{\L} (\L_0, \L_0)$ and $\hat{E}_1 (\L)  = \Ext^1_{\L} (\L_0, \L_0) \p \Ext^2_{\L} (\L_0, \L_0) \p \Ext^3_{\L} (\L_0, \L_0)$.
From Proposition~\ref{sumof} together with $D = 2A$, we have $\Ext^6_{\L} (\L_0, \L_0)  = $
$$\Ext^3_{\L} (\L_0, \L_0) \times \Ext^3_{\L} (\L_0, \L_0) =  \Ext^2_{\L} (\L_0, \L_0) \times \Ext^2_{\L} (\L_0, \L_0)  \times \Ext^2_{\L} (\L_0, \L_0).$$
Since $\gldim \L \geqslant 6$, we have that $\Ext^6_{\L} (\L_0, \L_0) \neq 0$. So we may choose $0 \neq z \in \Ext^6_{\L} (\L_0, \L_0)$ with $z = x_1 x_2 x_3 = \sum y_i y_i'$, for  $x_j \in \Ext^2_{\L} (\L_0, \L_0)$ and $y_i, y_i' \in \Ext^3_{\L} (\L_0, \L_0)$. But $x_j, y_i, y_i' \in \hat{E}_1 (\L)$ so $\sum y_i y_i' \in (\hat{E}_1 (\L))^2$ whereas $x_1 x_2 x_3 \in (\hat{E}_1 (\L))^3$. Thus any definition of a hat-degree would require $z \in \hat{E}_2 (\L)\cap \hat{E}_3 (\L)$ which contradicts the definition of a grading.
\end{proof}

The hypothesis $\gldim\L \geqslant 6$ in the previous result is necessary, but to see this we require some Gr\"{o}bner basis theory to show that the Ext algebra is Koszul. We return to this in Example~\ref{example gldim 6}.

\section{A review of Gr\"obner basis theory}\label{sec:GB}

We give a brief introduction to Gr\"{o}bner bases following \cite{bib:FFG}, \cite{bib:G} and \cite{bib:GH}.

Let $\Gamma$ be a finite quiver, and let $\mathcal{B}$ be the basis of the path algebra $K\Gamma$ which consists of all paths in $K\Gamma$. We remark that $\mathcal{B}$ is a multiplicative basis of $K\Gamma$, that is, if $p, q \in \mathcal{B}$ then either $pq \in \mathcal{B}$ or $pq = 0$. An admissible order on $\mathcal{B}$ is a well-order $>$ on $\mathcal{B}$ that satisfies the following properties:
\begin{enumerate}
\item if $p, q, r \in \mathcal{B}$ and $p > q$ then $pr > qr$ if both are not zero and $rp > rq$ if both are not zero;
\item if $p, q, r \in \mathcal{B}$ and $p = qr$ then $p \geq q$ and $p \geq r$.
\end{enumerate}
From \cite{bib:G}, the left length lexicographic order is an admissible order and is defined as follows.
Arbitrarily order the vertices and arrows such that every vertex is less than every arrow. For paths
of length greater than $1$, if $p = \a_1 \a_2 \cdots \a_n$ and $q = \b_1 \b_2 \cdots \b_m$ where the
$\a_i$ and $\b_i$ are arrows and $p,q \in \mathcal{B}$, then $p > q$ if $ n > m$ or, if $n = m$, then
there is some $1 \leqslant i \leqslant n$ with $\a_j = \b_j $ for $j < i$ and $\a_i > \b_i$.

Let $K\Gamma$ be a path algebra and let $\mathcal{B}$ be the basis of all paths, with admissible order~$>$.
Let $x$ be an element of $K\Gamma$, so $x$ is a linear combination of paths $p_1, \dots, p_n$ in $\mathcal{B}$
(all with non-zero coefficients). Then $\Tip(x)$ is the largest $p_i$ occurring in $x$ with respect
to the ordering $>$. We let $\CTip(x)$ denote the coefficient of $\Tip(x)$. The paths $p_1, \dots, p_n$ are called the support of $x$, denoted $\Supp(x)$.
If $I$ is an ideal in $K\Gamma$ then $\Tip(I)$ is the set of paths that occur as tips of non-zero elements of $I$. We let Nontip$(I)$ be the set of finite paths in $K\Gamma$ that are not in $\Tip(I)$. A Gr\"{o}bner basis for $I$ is a non-empty subset $G$ of $I$ such that the tip of each non-zero element of $I$ is divisible by the tip of some element in~$G$.

Now let $a$ be a non-zero element of $K\Gamma$. A simple (algebra) reduction for $a$ is determined by a $4$-tuple
$(\l, u, f, v)$ where
$\l \in K\backslash\{0\}, f \in K\Gamma \backslash \{0\}$ and $u,v \in \mathcal{B}$, satisfying:
\begin{enumerate}
\item $u \Tip(f) v \in \Supp(a)$;
\item $u \Tip(f) v \notin \Supp(a - \l u f v)$.
\end{enumerate}
We say that $a$ reduces over $f$ to $a - \l u f v$ and write $a \Rightarrow_f a - \l u f v$. More generally, $a$ reduces to $a'$ over a set $X = \{f_1, \ldots, f_n \}$, denoted $a \Rightarrow_X a'$, if there is a finite sequence of reductions so that $a$ reduces to $a_1$ over $f_1$, $a_i$ reduces to $a_{i+1}$ over $f_{i+1}$ for $i = 1, \dots, n-2$, and $a_{n-1}$ reduces to $a'$ over $f_n$.

For $a, b \in \mathcal{B}$, we say that $a$ divides $b$ if there exist $u, v \in \mathcal{B}$ such that $b = uav$. Let $h_1, h_2 \in K\Gamma$ and suppose there are elements $p, q \in \mathcal{B}$ such that:
\begin{enumerate}
\item $\Tip(h_1) p = q \Tip(h_2)$;
 \item $\Tip(h_1) $ does not divide $q$ and $\Tip(h_2)$ does not divide $p$.
\end{enumerate}
Then the overlap difference of $h_1$ and $h_2$ by $p, q$ is defined to be
$$o(h_1, h_2, p, q) = (1 / \CTip(h_1)) h_1 p - (1 / \CTip(h_2)) q h_2.$$
It is clear that the overlap difference of two elements in $\mathcal{B}$ is always zero.

\bigskip

We are now ready to state the main results we will need in Sections~\ref{sec:extmonomial} and \ref{sec:notmonomial}. We keep the above notation.

\begin{thm}\cite[Theorem 13]{bib:FFG}\label{reducedG}
Let $K\Gamma$ be a path algebra and let $\mathcal{H}  = \{h_j : j \in \mathcal{J}\}$ be a set of non-zero uniform elements
in $K\Gamma$ which generates the ideal $I$. Assume that the following conditions hold, for all $i, j \in \mathcal{J}$:
\begin{enumerate}[(i)]
\item $\CTip(h_j) = 1$;
\item $h_i$ does not reduce over $h_j$ if $i \neq j$;
\item every overlap difference for two (not necessarily distinct) members of $\mathcal{H}$ always reduces to zero over $\mathcal{H}$.
\end{enumerate}
Then $\mathcal{H}$ is a reduced Gr\"{o}bner basis of $I$.
\end{thm}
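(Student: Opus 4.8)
The plan is to prove the two halves of the conclusion separately: that $\mathcal{H}$ is a Gr\"obner basis of $I$, and that this Gr\"obner basis is reduced. Two preliminary observations underpin everything. First, since $>$ is a well-order and each simple reduction $a \Rightarrow_f a - \l u f v$ deletes the path $u\Tip(f)v$ from $\Supp(a)$ while introducing only paths $uwv$ with $w \in \Supp(f)$, $w \neq \Tip(f)$, hence $uwv < u\Tip(f)v$ by admissibility, every chain of reductions strictly decreases the multiset of support paths; thus reduction over $\mathcal{H}$ terminates. Second, admissibility gives $\Tip(u h_j v) = u \Tip(h_j) v$ whenever $u h_j v \neq 0$, so tips multiply compatibly with the order.

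For the Gr\"obner basis property I would argue via a minimal representation. Let $0 \neq x \in I$. Since $\mathcal{H}$ generates $I$ and each $h_j$ is uniform, write $x = \sum_{k} \l_k u_k h_{j_k} v_k$ with $\l_k \in K \setminus\{0\}$, $u_k, v_k \in \mathcal{B}$ and $u_k h_{j_k} v_k \neq 0$. Set $M = \max_k u_k \Tip(h_{j_k}) v_k$ and, using the well-order, choose the representation for which $M$ is least. The goal is to show $M = \Tip(x)$; granting this, $\Tip(x) = u_k \Tip(h_{j_k}) v_k$ for some $k$, so $\Tip(h_{j_k})$ divides $\Tip(x)$ and $\mathcal{H}$ is a Gr\"obner basis. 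Since every term has tip $\leqslant M$ we always have $\Tip(x) \leqslant M$; if the inequality were strict then the coefficient of $M$ in $x$ vanishes, so the index set $S = \{k : u_k\Tip(h_{j_k})v_k = M\}$ satisfies $\sum_{k \in S}\l_k = 0$ (using $\CTip(h_{j_k}) = 1$ from (i)) and in particular $|S| \geqslant 2$. A telescoping identity then rewrites $\sum_{k\in S}\l_k u_k h_{j_k} v_k$ as a $K$-combination of differences $u_a h_{j_a} v_a - u_b h_{j_b} v_b$ with $a, b \in S$, the leftover $(\sum_{k\in S}\l_k)$-term being zero.

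It therefore suffices to show each such difference is a $K$-combination of single-generator terms $u' h_{j'} v'$ whose tips are strictly below $M$; substituting these back produces a representation of $x$ with smaller maximal tip, contradicting minimality. For a pair $a, b \in S$ the two occurrences of $\Tip(h_{j_a})$ and $\Tip(h_{j_b})$ as subpaths of $M$ fall into three cases. If they are disjoint, then replacing each $\Tip(h_j)$ by $h_j - (h_j - \Tip(h_j))$ shows that both $u_a h_{j_a} v_a$ and $u_b h_{j_b} v_b$ equal a common element $p\, h_{j_a}\, q\, h_{j_b}\, r$ modulo single-generator terms of tip $< M$, so their difference has the required form. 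If one occurrence contains the other, then $\Tip(h_{j_b})$ divides $\Tip(h_{j_a})$ (or conversely); for distinct generators this means $h_{j_a}$ reduces over $h_{j_b}$, contradicting (ii), while for $j_a = j_b$ the occurrences coincide and the difference is zero. In the remaining proper-overlap case one reads off paths $p, q \in \mathcal{B}$ with $\Tip(h_{j_a}) p = q \Tip(h_{j_b})$, $\Tip(h_{j_a}) \nmid q$, $\Tip(h_{j_b}) \nmid p$, together with $u', v' \in \mathcal{B}$ so that $u_a = u'$, $v_a = p v'$, $u_b = u' q$, $v_b = v'$; then $u_a h_{j_a} v_a - u_b h_{j_b} v_b = u'\, o(h_{j_a}, h_{j_b}, p, q)\, v'$. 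By hypothesis (iii) the overlap difference reduces to zero over $\mathcal{H}$, and unwinding the reduction sequence (recording the term removed at each step) expresses it as $\sum_l \mu_l s_l h_{t_l} w_l$ with every $\Tip(s_l h_{t_l} w_l) \leqslant \Tip(o(h_{j_a},h_{j_b},p,q)) < \Tip(h_{j_a})p$; multiplying by $u', v'$ gives a combination with all tips $< M$. This closes the argument that $M = \Tip(x)$.

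Finally, reducedness is quick. Condition (i) makes each $h_j$ monic. No non-tip path of $\Supp(h_j)$ is divisible by $\Tip(h_j)$, since admissibility forces any path divisible by $\Tip(h_j)$ to be $\geqslant \Tip(h_j)$ while $\Tip(h_j)$ is the maximum of $\Supp(h_j)$; condition (ii) rules out divisibility of any path of $\Supp(h_j)$ by $\Tip(h_i)$ for $i \neq j$. Together with the Gr\"obner basis property just established, this is precisely the assertion that $\mathcal{H}$ is reduced. The main obstacle throughout is the proper-overlap case: one must set up the position bookkeeping of the two tips inside $M$ precisely enough to identify the difference with a single overlap difference $o(h_{j_a}, h_{j_b}, p, q)$, and then track tips carefully through the reduction-to-zero of (iii) to guarantee the strict drop below $M$. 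The treatment must also permit $h_{j_a} = h_{j_b}$, which is exactly why (iii) is imposed for not-necessarily-distinct pairs.
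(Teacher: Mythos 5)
This theorem is not proved in the paper at all: it is imported verbatim as \cite[Theorem 13]{bib:FFG} from Farkas--Feustel--Green, so there is no in-paper argument to compare yours against. Judged on its own merits, your proof is correct and is essentially the standard Buchberger/diamond-lemma argument that underlies the cited source: termination of reduction via the well-order, a minimal-maximal-tip representation $x=\sum_k \lambda_k u_k h_{j_k} v_k$ of an element of $I$, cancellation of the leading coefficients forcing $|S|\geqslant 2$, telescoping into pairwise differences, and the three-way case split (disjoint occurrences, containment, proper overlap) in which containment is killed by hypothesis (ii) and proper overlap is rewritten as $u'\,o(h_{j_a},h_{j_b},p,q)\,v'$ and dispatched by hypothesis (iii), with the tip-tracking through the reduction-to-zero giving the strict drop below $M$. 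Two details you gloss over are worth flagging, though both are routine: the identity $\Tip(u h_j v)=u\Tip(h_j)v$ and the nonvanishing of the substituted terms rest on the \emph{uniformity} of the $h_j$ (and of the overlap differences), not just on admissibility of the order, since products of paths in $K\Gamma$ can be zero; and in the proper-overlap case one should note that the length inequalities $\ell(q)<\ell(\Tip(h_{j_a}))$ and $\ell(p)<\ell(\Tip(h_{j_b}))$ are what guarantee the non-divisibility conditions required by the definition of an overlap difference. Your observation that reducedness is then almost literally hypotheses (i) and (ii) restated is also correct.
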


\begin{thm}\cite[Theorem 3]{bib:GH}\label{Kosz}
Let $K\Gamma$ be a path algebra, let $I$ be a quadratic ideal of $K\Gamma$ and set $A = K\Gamma / I$. Then:
\begin{enumerate}[(i)]
\item The reduced Gr\"{o}bner basis of $I$ consists of homogeneous elements.
\item If the reduced Gr\"{o}bner basis of $I$ consists of quadratic elements then $A$ is a Koszul algebra.
\end{enumerate}
\end{thm}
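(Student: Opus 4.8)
The plan is to prove the two parts independently: part (i) from the compatibility of the left length lexicographic order with the length grading, and part (ii) by passing to the associated monomial algebra and transferring Koszulness back. \textbf{For part (i),} I would first record that a quadratic ideal is, by definition, generated by length-homogeneous elements (linear combinations of paths of length $2$), so $I$ is a graded ideal for the length grading, $I = \bigoplus_d (I \cap K\Gamma_d)$ with $K\Gamma_d$ the span of the paths of length $d$. Because the left length lexicographic order first compares lengths, the tip of a homogeneous element has the same length as that element, and hence every homogeneous component of an element of $I$ again lies in $I$. I would then observe that both operations producing the reduced Gr\"obner basis preserve homogeneity: the overlap difference $o(h_1,h_2,p,q)$ of two homogeneous uniform elements is homogeneous, as is each simple reduction $a \Rightarrow_f a - \l u f v$ when $a$ and $f$ are homogeneous of the same degree. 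Running Buchberger's procedure from the homogeneous generators and then inter-reducing therefore yields a homogeneous set, and since the reduced Gr\"obner basis is unique it must consist of homogeneous elements.

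\textbf{For part (ii),} suppose the reduced Gr\"obner basis $G$ is quadratic. Then each $\Tip(g)$, $g\in G$, is a path of length $2$, so $\langle \Tip(I)\rangle = \langle \Tip(g) : g \in G\rangle$ is generated in degree $2$ and the associated monomial algebra $M = K\Gamma/\langle\Tip(I)\rangle$ is a quadratic monomial algebra. The first step is the classical fact that a quadratic monomial algebra is Koszul: one writes down the explicit minimal resolution of $M_0$ whose $n$th projective is generated by the length-$n$ paths built from iterated overlaps of the quadratic tips, and checks directly that it is linear. Since Nontip$(I)$ is simultaneously a $K$-basis of $A = K\Gamma/I$ and of $M$, the algebras $A$ and $M$ have the same Hilbert series, and their degree-$0$ parts both equal the semisimple vertex span.

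The substantive step, and the one I expect to be the main obstacle, is transferring Koszulness from $M$ to $A$. I would do this through Anick's chain resolution associated to the Gr\"obner basis $G$: the $n$-chains are the iterated overlaps of the length-$2$ tips, so each $n$-chain has length exactly $n$, and the resulting resolution of $A_0$ has its $n$th term generated in degree $n$. The difficulty is twofold: verifying exactness of the Anick resolution built from $G$, and ensuring that passing to the \emph{minimal} resolution cannot lower any generating degree, so that linearity is retained and $A$ is Koszul. An alternative, which avoids Anick chains, is a filtration argument: equip $A$ with the filtration whose associated graded algebra is $M$, so $\gr A = M$; the obstacle then becomes the standard but nontrivial semicontinuity statement that, when the Hilbert series coincide, Koszulness of $\gr A$ forces Koszulness of $A$.
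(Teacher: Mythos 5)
This statement is never proved in the paper: it is imported verbatim, with attribution, as Theorem 3 of Green--Huang \cite{bib:GH}, and the paper only applies it (in Proposition~\ref{prop:monredGB} and Example~\ref{nonfish}). So there is no internal proof to compare you against; the comparison must be with the known argument in the literature, and your proposal is essentially a faithful reconstruction of it. Your part (i) is correct as sketched: a quadratic ideal is length-homogeneous, the admissible order is length-compatible, and overlap differences and simple reductions of homogeneous elements stay homogeneous, so uniqueness of the reduced Gr\"obner basis forces homogeneity. A cleaner route that avoids any appeal to termination of the completion procedure is to observe that each element of the reduced Gr\"obner basis has the form $t - N(t)$, where $t$ is a minimal generator of the tip ideal and $N(t)$ is the normal form of $t$ modulo $I$, i.e.\ the unique element of the span of Nontip$(I)$ congruent to $t$; since $I$ is graded and the order respects length, the homogeneous components of $t - N(t)$ lie in $I$, and a nonzero element of $I$ supported on Nontip$(I)$ is impossible, so $N(t)$ is homogeneous of the same degree as $t$.

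For part (ii) your strategy --- pass to the associated monomial algebra $M = K\Gamma/\langle \Tip(I)\rangle$, note that quadratic monomial algebras are Koszul, and transfer Koszulness back to $A$ --- is exactly the standard proof, but the two ``obstacles'' you flag are not open gaps; they are citable theorems, and leaving them as difficulties is the only real shortfall of the proposal. Exactness of the resolution built from the chains of a Gr\"obner basis is Anick's theorem (in the path-algebra setting, due to Anick and Green), valid for any Gr\"obner basis, not just quadratic ones. The worry that passing to the minimal resolution could spoil linearity dissolves by a standard fact: over a graded algebra whose degree-zero part is semisimple, the minimal graded projective resolution of $A_0$ is a direct summand of any graded projective resolution, and a graded projective summand of a projective generated purely in degree $n$ is again generated in degree $n$; so the existence of one linear resolution (the Anick one, whose $n$-chains have length $n$ because the tips are quadratic) already gives Koszulity. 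Your alternative filtration route is equally standard: $\gr A \cong M$ for the filtration induced by the monomial order, and the ``PBW implies Koszul'' lifting result (Priddy; Polishchuk--Positselski) finishes the argument --- here one does not even need to invoke the coincidence of Hilbert series as a separate hypothesis. In short: the approach is the right one and matches the cited source's; what remains is to replace your hedged obstacles with the appropriate citations.
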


\section{Koszulity of the Ext algebra of a $(D,A)$-stacked monomial algebra}\label{sec:extmonomial}

In this section we show that there is a regrading so that the Ext algebra of a $(D,A)$-stacked monomial algebra is a Koszul algebra, providing we have $D \neq 2A$ when $A > 1$.

Let $\Lambda$ be a $(D,A)$-stacked monomial algebra. In the case where $D=2$ then $\Lambda$ is a Koszul algebra, in which case it is known that the Ext algebra is again Koszul, so no regrading of the Ext algebra is required. Moreover, the structure of $E(\Lambda)$ by quiver and relations was given in \cite{bib:GMVII}. If $D>2$ and $A=1$ then $\Lambda$ is a $D$-Koszul algebra, and it was shown in  \cite[Theorem 7.1]{bib:GMMZ} that there is a regrading of the Ext algebra under which the regraded Ext algebra is a Koszul algebra.

Thus we may assume that $\Lambda$ is a $(D,A)$-stacked monomial algebra with $D>2, A>1$ and $D \neq 2A$; we use the regrading of Section~\ref{regrad}. We note that if $\gldim \Lambda \geqslant 4$, then \cite[Proposition 3.3(3)]{bib:GS2} shows that $A | D$.
Thus $D \neq A+1$ (for otherwise $D = dA$ so that $dA = A+1$, which cannot occur since $A > 1$).
So the conditions of Definition~\ref{regrading} are satisfied and we can consider the hat-degree grading
on $E(\L)$. On the other hand, if $\gldim \Lambda < 4$ then
$E(\L) = \bigoplus_{i=0}^3\Ext^i_0(\L_0, \L_0)$ and
the definitions of $\hat{E}(\L)_0$ and $\hat{E}(\L)_1$ in Definition~\ref{regrading} trivially give
a grading in this case.

The Ext algebra of a monomial algebra was described by Green and Zacharia in \cite[Theorem B]{bib:GZ},
using the concept of overlaps from \cite{GHZ}. We start by recalling their work before
turning our attention to the case of $(D,A)$-stacked monomial algebras. (The reader may also see \cite{bib:GS3}.)

Let $\Lambda = K{\mathcal Q}/I$ be a monomial algebra. Fix a set ${\R}^2$ which is a minimal generating
set of paths for $I$. Recall that a path $p$ is a {\it prefix} of a path $q$ if there is some
path $p'$ such that $q = pp'$.

\begin{defin}
\begin{enumerate}
\item A path $q$ overlaps a path $p$ with overlap $pu$ if there are paths
$u$ and $v$ such that $pu = vq$ and $1 \leqslant \ell(u) < \ell(q)$. We
may illustrate the definition with the following diagram.
\[\xymatrix@W=0pt@M=0.3pt{
\ar@{^{|}-^{|}}@<-1.25ex>[rrr]_p\ar@{{<}-{>}}[r]^{v} &
\ar@{_{|}-_{|}}@<1.25ex>[rrr]^q & & \ar@{{<}-{>}}[r]_{u} &
& }\]
Note that we allow $\ell(v) = 0$ here.
\item A path $q$ properly overlaps a path $p$ with overlap $pu$ if $q$ overlaps $p$ and
$\ell(v) \geqslant 1$.
\item A path $p$ has no overlaps with a path $q$ if $p$ does not properly overlap $q$
and $q$ does not properly overlap $p$.
\end{enumerate}
\end{defin}

We use these overlaps to define sets $\R^n$ recursively. Let
$$\begin{array}{lll}
\R^0 & = & \mbox {the set of vertices of $K{\mathcal Q}$,}\\
\R^1 & = & \mbox{the set of arrows of $K{\mathcal Q}$,}\\
\R^2 & = & \mbox{the minimal generating set of paths for $I$.}
\end{array}$$
For $n \geqslant 3$, we say $R^2 \in \R^2$ maximally overlaps $R^{n-1} \in \R^{n-1}$
with overlap $R^n = R^{n-1}u$ if
\begin{enumerate}
\item $R^{n-1} = R^{n-2}p$ for some path $p$;
\item $R^2$ overlaps $p$ with overlap $pu$;
\item no proper prefix of $pu$ is an overlap of an element of $\R^2$ with $p$.
\end{enumerate}
We may also say that $R^n$ is a maximal overlap of $R^2 \in \R^2$ with $R^{n-1} \in \R^{n-1}$.

We let $\R^n$ denote the set of all overlaps $R^n$ formed in this way.

The construction of the paths in $\R^n$ may be illustrated with the following diagram of $R^n$.
\[\xymatrix@W=0pt@M=0.3pt{
\ar@{^{|}-^{|}}@<-1.25ex>[rrr]_{R^{n-2}}\ar@{_{|}-_{|}}@<1.5ex>[rrrrr]^{R^{n-1}}
& & & \ar@{{<}-{>}}[rr]_{p} & \ar@{_{|}-_{|}}@<3.5ex>[rr]^{R^2} &
\ar@{{<}-{>}}[r]_{u} & & }\]

Recall from \cite{GHZ} that if $R_1^np = R_2^nq$, for $R_1^n,
R_2^n \in \R^n$ and paths $p, q$, then $R_1^n = R_2^n$ and $p =
q$.

These sets $\R^n$ were used in \cite{GHZ} to construct a minimal graded projective
resolution $(P^n, d^n)$ of $\L_0$ as a right $\L$-module as follows. Define $P^n = \bigoplus_{R^n \in
\R^n}t(R^n)\L$ for all $n\geqslant 0$. Let $d^0: P^0 \to \L_0$ be the canonical surjection and,
for $n \geqslant 1$, define $\L$-homomorphisms $d^n: P^n \to P^{n-1}$ by
$t(R^n) \mapsto t(R^{n - 1})p$ where $R^n = R^{n-1}p$ and $t(R^{n - 1})p$ is in the component of
$P^{n-1}$ corresponding to $t(R^{n-1})$.

We identify the set $\R^n$ with a basis $f^n$ for $\Ext_\L^n(\L_0, \L_0)$ in the following way.
List the elements of $\R^n$ as $R^n_1, \dots , R^n_r$ for some $r$. Let $n \geqslant 0$ and define
$f^n_i$ to be the $\L$-homomorphism $P^n \to \L_0$ given by
$$f^n_i : t(R^n_j) \mapsto
\left \{ \begin{array}{ll}
t(R^n_i) + \rrad & \mbox{ if }  i = j \\
0 & \mbox{ otherwise}.
\end{array} \right. $$
We use right modules throughout this paper, together with the standard convention that we compose module homomorphisms from right to left. Thus the composition $f \circ g$ means we apply $g$ first then $f$.
Recall that we write paths in a quiver from left to right.
So, if $f^n_i$ corresponds to the path $R^n_i \in \R^n$ and if $R^n_i = eR^n_i e'$, where $e =s(R^n_i)$ and $e' = t(R^n_i)$ are in $\R^0$, then
$f^n_i = f^0_{e'}f^n_if^0_e$ where $f^0_e$ (respectively, $f^0_{e'}$) denotes the element of $f^0$ that corresponds to $e$ (respectively, $e'$).
With this notation, we have from \cite{bib:GZ} that $f^m_j f^n_i \neq 0$ in $E(\L)$ if and only if $R^n_iR^m_j  = R^{n+m}_k \in \R^{n+m}$ for some $k$ and $f^m_j f^n_i = f^{n+m}_k$.

\bigskip

Now let $\Lambda = K{\mathcal Q}/I$ be a $(D,A)$-stacked monomial algebra with $D>2, A> 1$ and $D \neq 2A$.
The set $f^0$ is a basis of $\hat{E}(\L)_0$ and the set $f^1\cup f^2\cup f^3$ is a basis of $\hat{E}(\L)_1$.
From Theorem~\ref{genin}, we have that $E(\Lambda)$ is generated in degrees 0, 1, 2 and 3, and, from Theorem~\ref{summary}, we know $\Ext^1_{\L} (\L_0 , \L_0) \times \Ext^1_{\L} (\L_0 , \L_0) = 0$ and $\Ext^1_{\L} (\L_0 , \L_0) \times \Ext^2_{\L} (\L_0 , \L_0) = 0 = \Ext^2_{\L} (\L_0 , \L_0) \times \Ext^1_{\L} (\L_0 , \L_0) = 0$. Thus, the set $f^0\cup f^1\cup f^2\cup f^3$ is a minimal generating set for $E(\L)$ as a $K$-algebra.

Let $\Delta$ be the quiver with vertex set $\Delta_0 = f^0$ and arrow set $\Delta_1 = f^1\cup f^2\cup f^3$.
Following \cite[Definition 2.1]{bib:GZ}, let $I_\Delta$ be the ideal of $K\Delta$ generated by elements
of the form $f^{n_1}_{i_1} \cdots f^{n_r}_{i_r}$ where the path $R^{n_r}_{i_r} \cdots R^{n_1}_{i_1}$
is not in $R^{n_1 + \cdots + n_r}$ and by elements of the form
$f^{n_1}_{i_1} \cdots f^{n_r}_{i_r} - f^{m_1}_{i_1} \cdots f^{m_s}_{i_s}$ where, as paths in
$K{\mathcal Q}$, we have $R^{n_r}_{i_r} \cdots R^{n_1}_{i_1} = R^{m_s}_{i_s} \cdots R^{m_1}_{i_1}$
and $n_1 + \cdots + n_r = m_1 + \cdots + m_s$. The Ext algebra of our $(D,A)$-stacked monomial algebra $\Lambda$ can now be described by quiver and relations.

\begin{thm}\cite[Theorem B]{bib:GZ}
Let $\Lambda = K{\mathcal Q}/I$ be a $(D,A)$-stacked monomial algebra with $D>2, A> 1$ and $D \neq 2A$. Keeping the above notation, we have $\hat{E}(\L) \cong K\Delta/I_\Delta$.
\end{thm}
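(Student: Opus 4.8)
The plan is to exhibit the stated isomorphism as the specialisation of the Green--Zacharia description \cite[Theorem B]{bib:GZ} to the present setting, the decisive extra input being that $E(\L)$ is generated in homological degrees $0,1,2,3$. First I would define a $K$-algebra homomorphism $\phi\colon K\Delta \to \hat{E}(\L)$ by sending each vertex $f^0_e$ to the corresponding idempotent of $E(\L)$ and each arrow $f^n_i$ with $n\in\{1,2,3\}$ to its class in $\Ext^n_\L(\L_0,\L_0)$; as $K\Delta$ is the free algebra on the quiver $\Delta$, this specifies $\phi$ uniquely. Since $f^0\cup f^1\cup f^2\cup f^3$ is a minimal generating set of $E(\L)$ as a $K$-algebra (obtained from Theorem~\ref{genin} and Theorem~\ref{summary} immediately before the statement), the map $\phi$ is surjective, and the task reduces to proving $\ker\phi = I_\Delta$ together with compatibility of gradings.

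The inclusion $I_\Delta\subseteq\ker\phi$ is the routine half. Applying the multiplication rule recalled from \cite{bib:GZ}, namely $f^m_j f^n_i = f^{n+m}_k$ whenever $R^n_i R^m_j = R^{n+m}_k\in\R^{n+m}$ and $f^m_j f^n_i = 0$ otherwise, one evaluates $\phi$ on each generator of $I_\Delta$: a generator $f^{n_1}_{i_1}\cdots f^{n_r}_{i_r}$ with $R^{n_r}_{i_r}\cdots R^{n_1}_{i_1}\notin\R^{n_1+\cdots+n_r}$ maps to the iterated Yoneda product, which the rule forces to be $0$, while a difference $f^{n_1}_{i_1}\cdots f^{n_r}_{i_r}-f^{m_1}_{i_1}\cdots f^{m_s}_{i_s}$ with equal concatenations maps to $f^N_k-f^N_k=0$. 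Hence $I_\Delta\subseteq\ker\phi$.

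For the reverse inclusion I would compare dimensions by producing a basis of $K\Delta/I_\Delta$ mapping bijectively to the standard basis $\bigcup_{N\geqslant 0}f^N$ of $E(\L)$. The key structural fact is that every $R^N\in\R^N$ with $N\geqslant 4$ factors as a concatenation of pieces from $\R^1\cup\R^2\cup\R^3$: iterating the identity $\Ext^N_\L(\L_0,\L_0)=\Ext^2_\L(\L_0,\L_0)\times\Ext^{N-2}_\L(\L_0,\L_0)$ supplied by Proposition~\ref{sumof} in the proof of Theorem~\ref{genin} expresses each $f^N_k$ as a product of copies of $f^2$ with a terminal factor $f^2$ or $f^3$ according to the parity of $N$, so the corresponding $\Delta$-path maps onto $f^N_k$. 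Thus residues of $\Delta$-paths span $K\Delta/I_\Delta$ and $\phi$ hits every $f^N_k$. Conversely, the overlap uniqueness property (if $R^n_1 p = R^n_2 q$ then $R^n_1=R^n_2$ and $p=q$) shows that two $\Delta$-paths have equal nonzero image exactly when their concatenations coincide, which is precisely the identification imposed by $I_\Delta$; so no further collapsing occurs and $\ker\phi\subseteq I_\Delta$. Finally, each of $f^1,f^2,f^3$ has hat-degree $1$ and a $\Delta$-path of length $r$ maps to a class of hat-degree $r$, so $\phi$ descends to a graded isomorphism $\hat{E}(\L)\cong K\Delta/I_\Delta$.

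The hard part will be the inclusion $\ker\phi\subseteq I_\Delta$, i.e.\ showing that $I_\Delta$ already contains every relation: this needs both the existence of the degree-$2/3$ factorisations of each $R^N$, so that path-residues exhaust $\bigcup_N f^N$, and the overlap uniqueness statement, so that no unexpected identifications arise. It is here that the hypotheses $D>2$, $A>1$ and $D\neq 2A$ intervene, via Theorem~\ref{summary}, to suppress spurious nonzero products and to guarantee that the hat-grading, and hence the grouping of factors into degrees $2$ and $3$, is consistent.
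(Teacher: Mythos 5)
Your proposal is correct, but it takes a genuinely different (and much more laborious) route than the paper, which offers essentially no proof at all: the statement is quoted directly as \cite[Theorem B]{bib:GZ}, a general theorem giving a presentation by quiver and relations of the Ext algebra of \emph{any} monomial algebra, and the paper's only contribution is the discussion preceding the statement, where Theorem~\ref{genin} and Theorem~\ref{summary} are used to identify $f^0\cup f^1\cup f^2\cup f^3$ as a minimal generating set of $E(\L)$, so that the Green--Zacharia quiver specialises to $\Delta$ and the relation ideal to $I_\Delta$. What you do instead is reconstruct the proof of the cited theorem in this special case: the surjection $\phi\colon K\Delta\to\hat{E}(\L)$, the easy inclusion $I_\Delta\subseteq\ker\phi$ from the multiplication rule, and the reverse inclusion via the factorisation of every $f^N_k$ with $N\geqslant 4$ as a product of elements of $f^2$ with at most one factor from $f^3$ (using Proposition~\ref{sumof}, the fact that in the monomial case nonzero Yoneda products of basis elements are again basis elements, and the prefix-uniqueness property $R^n_1p=R^n_2q\Rightarrow R^n_1=R^n_2,\ p=q$), followed by a spanning/dimension comparison. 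This is sound, and it is in outline the Green--Zacharia argument itself; its merit is that it makes visible exactly where the hypotheses $D>2$, $A>1$, $D\neq 2A$ intervene, namely in killing all products involving $f^1$ and all products of two elements of $f^3$, so that every path of $\Delta$ surviving modulo $I_\Delta$ has hat-degree equal to its length. What the citation buys the paper is brevity and generality: \cite[Theorem B]{bib:GZ} needs no finite-generation hypothesis, the conditions on $(D,A)$ entering only to pin down the arrow set of $\Delta$. One small point to tidy in your write-up: the relations of the second kind in $I_\Delta$ are a priori differences of paths of different lengths $r\neq s$, so to conclude that $\phi$ descends to a \emph{graded} isomorphism you should observe that a monomial in the arrows with nonzero image contains no factor from $f^1$ and at most one from $f^3$, which forces $r=s$ whenever both monomials survive modulo $I_\Delta$; with that remark your grading claim is complete.
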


We now describe a minimal generating set $\mathcal{H}_\Delta$ for $I_\Delta$. The aim is to show in Proposition~\ref{prop:monredGB}, that it is also a quadratic reduced Gr\"{o}bner basis of $I_\Delta$, and hence use Theorem~\ref{Kosz} to show that $K\Delta/I_\Delta$ is a Koszul algebra. Note that since $\Lambda$ is a $(D,A)$-stacked monomial algebra, each element of $\R^n$ has length $\delta(n)$ where $\delta$ is as in Definition~\ref{d,a}. We start with the following proposition.

\begin{prop}\label{prop:R5}
Let $\Lambda = K{\mathcal Q}/I$ be a $(D,A)$-stacked monomial algebra with $D>2, A> 1$ and $D \neq 2A$.
\begin{enumerate}[(i)]
\item If $R^2_iR^3_j \in \R^5$ for some $i, j$, then $R^2_iR^3_j = R^3_kR^2_l$ for some $R^2_l \in \R^2, R^3_k \in \R^3$.
\item If $R^3_iR^2_j \in \R^5$ for some $i, j$, then $R^3_iR^2_j = R^2_kR^3_l$ for some $R^2_k \in \R^2, R^3_l \in \R^3$.
\end{enumerate}
\end{prop}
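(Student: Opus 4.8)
The plan is to deduce both parts from Proposition~\ref{sumof} together with the monomial description of $E(\L)$ recalled above, rather than by manipulating overlaps directly. The essential observation is that in the monomial case the Yoneda product of two basis elements is either zero or a single basis element, so the surjectivity furnished by Proposition~\ref{sumof} is exactly a statement about how individual paths in $\R^5$ factor.

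First I would check that the degrees match, so that Proposition~\ref{sumof} applies with $\a = 2$ and $\b = 3$. Since $\L$ is $(D,A)$-stacked, $P^2, P^3, P^5$ are generated in the single degrees $\delta(2) = D$, $\delta(3) = D+A$ and $\delta(5) = 2D+A$, and indeed $\delta(5) = \delta(2) + \delta(3)$. Proposition~\ref{sumof} then gives
$$\Ext^5_\L(\L_0,\L_0) = \Ext^2_\L(\L_0,\L_0) \times \Ext^3_\L(\L_0,\L_0) = \Ext^3_\L(\L_0,\L_0) \times \Ext^2_\L(\L_0,\L_0),$$
so $\Ext^5$ is spanned by products of a degree-$2$ class with a degree-$3$ class, and also by products in the opposite order.

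Next I would feed in the monomial structure. The set $f^5$ indexed by $\R^5$ is a $K$-basis of $\Ext^5_\L(\L_0,\L_0)$, and $f^m_j f^n_i = f^{n+m}_k$ when $R^n_i R^m_j = R^{n+m}_k \in \R^{n+m}$ and $f^m_j f^n_i = 0$ otherwise; thus every product of two basis elements is zero or a single basis element. Since a subset of the basis $f^5$ which spans $\Ext^5$ must be the whole of $f^5$, the two equalities above say exactly that every $R^5_m \in \R^5$ can be written both as $R^2_i R^3_j$ (with $R^2_i \in \R^2$, $R^3_j \in \R^3$) and as $R^3_k R^2_l$ (with $R^3_k \in \R^3$, $R^2_l \in \R^2$).

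Both parts are then immediate. For (i), if $R^2_i R^3_j \in \R^5$ it is some $R^5_m$, which by the above may also be written $R^3_k R^2_l$; equating these paths in $K\cQ$ gives $R^2_i R^3_j = R^3_k R^2_l$. Part (ii) is identical, rewriting $R^3_i R^2_j \in \R^5$ in the form $R^2_k R^3_l$. The hard part is not the mathematics but the bookkeeping: one must keep straight the convention $f^m_j f^n_i \leftrightarrow R^n_i R^m_j$ relating the order of composition of the $f$'s to the order of the corresponding paths, since it is precisely this reversal that makes the two orders of the Yoneda product correspond to the two path-factorisations required in (i) and (ii).
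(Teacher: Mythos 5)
Your proof is correct, and it takes a genuinely different route from the paper's. The paper argues purely combinatorially with overlaps: by construction every element of $\R^5$ has a prefix $R^3_k \in \R^3$, so writing $R^5 = R^3_k p$ and counting lengths ($\ell(R^5)=2D+A$, $\ell(R^3_k)=D+A$, hence $\ell(p)=D$) identifies $p$ with the terminal $\R^2$-factor of the given factorisation $R^2_iR^3_j = R^2_i v R^2_l$; part (ii) is analogous. You instead combine Proposition~\ref{sumof} (which applies because $\delta(5)=\delta(2)+\delta(3)$, and is non-circular since it is an external input from \cite{bib:GMMZ}) with the Green--Zacharia product formula and the observation that a spanning subset of the basis $f^5$ must be all of $f^5$. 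Both arguments are sound. What the paper's approach buys is explicit combinatorial control: it exhibits the second factorisation concretely (the new $\R^2$-factor is the terminal $\R^2$-factor of $R^3_j$), in the same diagrammatic language that is reused in the proofs of Propositions~\ref{prop:monredgen} and \ref{prop:monredGB}. What your approach buys is brevity and a formally stronger conclusion: it shows that \emph{every} element of $\R^5$ admits both factorisations, so the hypotheses of (i) and (ii) are automatic, and it makes transparent that the result is a consequence of degree additivity plus the monomial product rule alone, with the side conditions $D>2$, $A>1$, $D\neq 2A$ playing no role. Your closing remark about the order-reversal convention $f^m_jf^n_i \leftrightarrow R^n_iR^m_j$ is precisely the point that needs care, since it is what matches the two orders of the Yoneda product to the two path factorisations; you have handled it correctly.
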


\begin{proof}
We prove the first statement and leave the second to the reader.
Without loss of generality, we may assume that $R^2_1R^3_1 \in \R^5$.
Then $R^2_1R^3_1 = R^5_k \in \R^5$ for some $k$.
The element $R^3_1$ is a maximal overlap of
$R^2_2 \in \R^2$ with $R^2_3 \in \R^2$ so that we may illustrate
$R^3_1$ as follows:
$$\xymatrix@W=0pt@M=0.3pt{
\ar@{^{|}-^{|}}@<-1.25ex>[rrr]_{R^2_3}\ar@{{<}-{>}}[r]^{v} &
\ar@{_{|}-_{|}}@<1.25ex>[rrr]^{R^2_2} & & \ar@{{<}-{>}}[r]_{u} & &
}$$
From the construction of $R^5_k \in \R^5$ via overlaps, there is some $R^3_i \in \R^3$ which is a prefix of $R^5_k$, so we may write $R^5_k = R^3_ip$ for some path $p$. Now, $R^5_k$ has length $2D+A$ and $R^3_i$ has length $D+A$, so $p$ has length $D$. However, $R^5_k = R^2_1R^3_1 = R^2_1vR^2_2$ as paths in $K\cQ$ and $R^2_2$ has length $D$, so we have that $p = R^2_2$ and thus $R^5_k = R^3_iR^2_2$ as required.

Note that we may illustrate the element $R^5_k$ as follows:
$$\xymatrix@W=0pt@M=0.3pt{
\ar@{^{|}-^{|}}@<-1.25ex>[rrr]_{R^2_1}\ar@{{<}-{>}}@<1.25ex>[rrrr]^{R^3_i} & & &\ar@{^{|}-^{|}}@<-1.25ex>[rrr]_{R^2_3}\ar@{{<}-{>}}[r]^{v} &
\ar@{_{|}-_{|}}@<1.25ex>[rrr]^{R^2_2} & & \ar@{{<}-{>}}[r]_{u} & &
}$$
\end{proof}

For ease of notation, we now use $a, b, c$ to denote the elements of $f^1, f^2, f^3$ respectively.
It follows from Proposition~\ref{prop:R5}, for $b \in f^2$ and $c\in f^3$, that if $bc \neq 0$ in $\Ext^5_\L(\L_0, \L_0)$ then there is some $b' \in f^2$ and $c'\in f^3$ such that $bc=c'b'$ in $\Ext^5_\L(\L_0, \L_0)$. Similarly, if $cb \neq 0$ in $\Ext^5_\L(\L_0, \L_0)$ then there is some $b' \in f^2$ and $c'\in f^3$ such that $cb=b'c'$ in $\Ext^5_\L(\L_0, \L_0)$.

\begin{defin}
Let $\Lambda = K{\mathcal Q}/I$ be a $(D,A)$-stacked monomial algebra with $D>2, A> 1$ and $D \neq 2A$.
Let $K\Delta$ be as above.
Let $\mathcal{H}_\Delta$ be the subset of $K\Delta$ containing all elements of the form
$$aa',\ ab,\ ba,\ ac,\ ca,\ cc'$$
together with
$$\begin{cases}
bb' & \mbox{if $bb' = 0$ in $\Ext^4_\L(\L_0, \L_0)$}\\
bc & \mbox{if $bc = 0$ in $\Ext^5_\L(\L_0, \L_0)$}\\
cb & \mbox{if $cb = 0$ in $\Ext^5_\L(\L_0, \L_0)$}\\
bc - c'b' & \mbox{if $bc \neq 0$ and where $bc=c'b'$ in $\Ext^5_\L(\L_0, \L_0)$}
\end{cases}$$
for all $a, a' \in f^1,\ b, b' \in f^2$ and $c, c' \in f^3$.
\end{defin}

\begin{prop}\label{prop:monredgen}
Let $\Lambda = K{\mathcal Q}/I$ be a $(D,A)$-stacked monomial algebra with $D>2, A> 1$ and $D \neq 2A$. Keeping the above notation, the set $\mathcal{H}_\Delta$ is a minimal generating set for $I_\Delta$.
\end{prop}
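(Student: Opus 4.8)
The plan is to prove the two inclusions $\langle\mathcal{H}_\Delta\rangle\subseteq I_\Delta$ and $I_\Delta\subseteq\langle\mathcal{H}_\Delta\rangle$ and then to verify minimality. Throughout I write $a,b,c$ for the arrows of $\Delta$ coming from $f^1,f^2,f^3$, and recall from Theorem~\ref{genin} and the discussion preceding the definition of $\mathcal{H}_\Delta$ that $f^0\cup f^1\cup f^2\cup f^3$ is a minimal generating set for $E(\L)$, so that $\Delta$ has exactly these arrows. The inclusion $\langle\mathcal{H}_\Delta\rangle\subseteq I_\Delta$ is routine: by Theorem~\ref{summary}(i)--(iv), and using $D>2$, $A>1$, $D\neq A+1$ (which holds here since $A\mid D$) and $D\neq 2A$, the products $aa'$, $ab$, $ba$, $ac$, $ca$ and $cc'$ all vanish in $E(\L)$, so these are zero-monomials of $I_\Delta$, as are the included $bb'$, $bc$, $cb$; and when $bc\neq0$, Proposition~\ref{prop:R5} produces $c',b'$ with $bc=c'b'$ and equal underlying paths, so $bc-c'b'$ is a difference-type generator of $I_\Delta$.

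The heart of the argument is generation, $I_\Delta\subseteq\langle\mathcal{H}_\Delta\rangle$, which I would handle by a sorting argument using $bc\mapsto c'b'$ as a rewrite rule; each application alters a word only by a two-sided multiple of $bc-c'b'\in\mathcal{H}_\Delta$ and preserves its value in $E(\L)$. Any word of length $\geqslant2$ containing an arrow $a$ has an adjacent pair in $\{aa',ab,ba,ac,ca\}\subseteq\mathcal{H}_\Delta$ and so lies in $\langle\mathcal{H}_\Delta\rangle$; in particular every difference-type generator is a difference of two nonzero words in $b,c$ only. Given a word in $b,c$, I sort its $c$'s to the left by repeatedly applying $bc\mapsto c'b'$; this terminates because each such move strictly decreases the number of ($b$-type before $c$-type) inversions. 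If during the sorting an adjacent pair ever becomes a vanishing product $bb'=0$, $bc=0$, $cb=0$ or $cc'=0$ (note $cc'$ always vanishes, so two $c$'s can never survive adjacent), then the current word, hence the original, lies in $\langle\mathcal{H}_\Delta\rangle$. For a zero-monomial generator this must occur, for otherwise the word would sort to a $c$-then-$b$ normal form with no vanishing adjacent product, which is nonzero in $E(\L)$ (see below), contradicting that the generator is zero. For a difference-type generator $w-w'$ I sort each of $w,w'$; since sorting preserves values and $w=w'$ in $E(\L)$, the two normal forms have equal value, hence coincide as words by the uniqueness of maximal-overlap decompositions recalled from \cite{GHZ}, so $w-w'\in\langle\mathcal{H}_\Delta\rangle$.

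\textbf{Main obstacle.} The step needing most care is the claim that a sorted word in $b,c$ -- necessarily $b^k$ or $cb^k$ with all adjacent products nonzero -- is itself nonzero in $E(\L)$, together with the uniqueness used to match normal forms. I would derive this from the maximal-overlap construction of the sets $\R^n$: Proposition~\ref{sumof} gives $\Ext^2_\L(\L_0,\L_0)\times\Ext^{2(k-1)}_\L(\L_0,\L_0)=\Ext^{2k}_\L(\L_0,\L_0)$ and its odd analogue, so every element of $\R^{2k}$ (respectively $\R^{2k+1}$) decomposes as a stack of $R^2$'s (respectively one $R^3$ followed by such a stack); conversely, the maximality of overlaps, together with Proposition~\ref{prop:R5} for the single $c$-junction, should show that stacking relations with pairwise nonzero overlaps always lands in $\R^n$. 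The uniqueness statement $R^n_1p=R^n_2q\Rightarrow R^n_1=R^n_2,\ p=q$ from \cite{GHZ} then gives that distinct normal forms have distinct values, supplying both the nonvanishing and the identification used above.

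For minimality, $I_\Delta$ is homogeneous for the path-length grading of $K\Delta$ and contained in $\mathfrak{a}^2$, where $\mathfrak{a}$ is the arrow ideal of $K\Delta$, so $\mathfrak{a}I_\Delta+I_\Delta\mathfrak{a}\subseteq\mathfrak{a}^3$ has zero component in path-length $2$. Since every element of $\mathcal{H}_\Delta$ is homogeneous of path-length $2$, it suffices to check linear independence in $K\Delta_2$: the monomial relations are distinct length-$2$ paths, and each binomial $bc-c'b'$ has as its $b$-then-$c$ term a monomial occurring in no other element of $\mathcal{H}_\Delta$ (the remaining terms being $c$-then-$b$ words or monomials of the other shapes). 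Hence the images of $\mathcal{H}_\Delta$ in $I_\Delta/(\mathfrak{a}I_\Delta+I_\Delta\mathfrak{a})$ form a basis, and $\mathcal{H}_\Delta$ is a minimal generating set for $I_\Delta$.
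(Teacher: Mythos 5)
Your overall architecture tracks the paper's proof quite closely: the inclusion $\langle\mathcal{H}_\Delta\rangle\subseteq I_\Delta$ via Theorem~\ref{summary}, a rewriting argument based on Proposition~\ref{prop:R5} that brings words into a normal form in the $b$'s and $c$'s, the treatment of difference-type generators via the uniqueness statement from \cite{GHZ}, and a minimality argument that is in fact more explicit than the paper's. But there is a genuine gap exactly where the real work lies. Your handling of the zero-type generators rests on the claim you yourself flag as the main obstacle: that a sorted word $b^k$ or $cb^k$, all of whose adjacent products are nonzero, is itself nonzero in $E(\L)$ --- equivalently, that stacking relations with pairwise-compatible junctions always produces an element of $\R^n$. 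You do not prove this; you say it ``should'' follow from ``the maximality of overlaps'' together with Proposition~\ref{prop:R5}. Proposition~\ref{prop:R5}, however, only gives the degree-$5$ commutation $bc=c'b'$; it says nothing about propagating nonvanishing across a junction between longer blocks, and Proposition~\ref{sumof} only gives the decomposition direction. This local-to-global statement is precisely what the paper's proof spends most of its length establishing: for each configuration of $\varepsilon_1,\varepsilon_2$ it takes the two maximal overlap sequences, shows that a covering relation for the junction must coincide with the path $uv$ formed from the uncovered tail of one block and the uncovered head of the other, and concludes that the two sequences merge into a single maximal overlap sequence; moreover, in the configuration where the $\R^3$ factor sits at the left-hand end of the path (the paper's case $\varepsilon_1=0,\varepsilon_2=1$), this analysis needs the hypothesis $D\neq 2A$ and an inductive contradiction argument. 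Nothing of this kind appears in your proposal, and indeed $D\neq 2A$ is never invoked in your generation argument at all --- a symptom of the missing analysis.

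In fairness, your variant is not merely a paraphrase: because you sort all $c$'s to the left of the word (so the $\R^3$ factor always ends up at the right-hand end of the corresponding path), the only junction analyses you would ever need are of the type the paper handles in its cases $\varepsilon_1=\varepsilon_2=0$ and $\varepsilon_1=1,\varepsilon_2=0$, and a careful merge-of-overlap-sequences argument there appears to go through without invoking $D\neq 2A$ (that hypothesis would then enter only through $cc'=0$, i.e.\ Theorem~\ref{summary}(iv), which your first inclusion already uses). So your route could plausibly be completed and might even streamline the paper's case analysis. But as written, the single statement that carries the whole proof --- nonvanishing of the normal forms, established by an explicit analysis of uncovered tails, covering relations and maximality of the merged overlap sequence --- is asserted rather than proven, so the proposal is incomplete at its core.
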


\begin{proof}
We have $D>2, A> 1$ and $D \neq 2A$ and, from the discussion at the start of this section, we may also assume that $D \neq A+1$. So, from Theorem~\ref{summary}, $aa', ab, ba, ac, ca, cc'$ are in $I_\Delta$ for all $a, a' \in f^1, b, b' \in f^2$ and $c, c' \in f^3$. Thus the ideal $\langle\mathcal{H}_\Delta\rangle$ is contained in $I_\Delta$.

The next step is to give two expressions for an arbitrary element $\eta$ of $f^n$ where
$n \geqslant 2$.
We may write $\eta$ as a finite product of monomials in the
generators $f^1\cup f^2 \cup f^3$. Moreover, $\eta$ is a non-zero element in $\Ext^n_\L(\L_0, \L_0)$ so $\eta \not\in \langle\mathcal{H}_\Delta\rangle$.
For ease of notation, let $b$ denote an arbitrary element of $f^2$ so that $b^r$ denotes a product
of $r$ elements in $f^2$.
Since $aa', ab, ba, ac, ca, cc' \in \mathcal{H}_\Delta$, we have that
$\eta = b^{t_1}c^{\varepsilon_1}b^{t_2}c^{\varepsilon_2} \cdots b^{t_r}c^{\varepsilon_r}$
for some $t_j \geqslant 0$ and $\varepsilon_j \in \{0, 1\}$.
Suppose $\varepsilon_1 = 1$ and $t_2 \geqslant 1$. Then $cb \neq 0$ in $\Ext^5_\L(\L_0, \L_0)$
so, by Proposition~\ref{prop:R5}, we may write $cb = b'c'$ for some $b'\in f^2, c'\in f^3$.
Thus $\eta = b^{t_1}(b'c' - h)b^{t_2-1}c^{\varepsilon_2} \cdots b^{t_r}c^{\varepsilon_r}$
where $h = b'c' - cb \in \mathcal{H}_\Delta$.
Thus $\eta = b^{t_1+1}c'b^{t_2-1}c^{\varepsilon_2} \cdots b^{t_r}c^{\varepsilon_r} + {\mathfrak h}$ for some ${\mathfrak h} \in \langle\mathcal{H}_\Delta\rangle$.
Continued use of Proposition~\ref{prop:R5} and that $cc' \in \mathcal{H}_\Delta$, shows that we may write
$\eta = b^tc^{\varepsilon} + {\mathfrak h}$ where $t = t_1+ \cdots + t_r$, $\varepsilon \in \{0, 1\}$ and ${\mathfrak h} \in \langle\mathcal{H}_\Delta\rangle$.
Similarly, we may write $\eta$ in the form $c^{\varepsilon}b^t + {\mathfrak h}$ where $\varepsilon \in \{0, 1\}$ and ${\mathfrak h} \in \langle\mathcal{H}_\Delta\rangle$.

To show the reverse inclusion that $I_\Delta \subseteq \langle\mathcal{H}_\Delta\rangle$, we suppose
first that $f^{n_1}_{i_1} \cdots f^{n_r}_{i_r}$ is in $I_\Delta$.
Without loss of generality, we may assume that $f^{n_1}_{i_1}f^{n_2}_{i_2} \in I_\Delta$ with $f^{n_1}_{i_1}, f^{n_2}_{i_2}$ both non-zero in $E(\Lambda)$.
Thus $R^{n_1}_{i_1} \in \R^{n_1}, R^{n_2}_{i_2}\in \R^{n_2}$ but the path $R^{n_2}_{i_2}R^{n_1}_{i_1}$
is not in $R^{n_1 + n_2}$.
We consider the case where $n_1, n_2 \geqslant 2$; the other cases are straightforward and are left to the reader.
From the argument above, we may write $f^{n_1}_{i_1} = b^{t_1}c^{\varepsilon_1} + {\mathfrak h_1}$ and
$f^{n_2}_{i_2} = c^{\varepsilon_2}b^{t_2} + {\mathfrak h_2}$,
where $\varepsilon_1, \varepsilon_2 \in \{0, 1\}$ and ${\mathfrak h}_1, {\mathfrak h}_2 \in \langle\mathcal{H}_\Delta\rangle$.
Then $f^{n_1}_{i_1}f^{n_2}_{i_2} = b^{t_1}c^{\varepsilon_1}c^{\varepsilon_2}b^{t_2} + {\mathfrak h}$ for some
${\mathfrak h} \in \langle\mathcal{H}_\Delta\rangle$.
Note that the path corresponding to $b^{t_1}c^{\varepsilon_1}$ is in $\R^{n_1}$
so that $n_1 = 2t_1 + 3\varepsilon_1$, the path corresponding to $c^{\varepsilon_2}b^{t_2}$
is in $\R^{n_2}$ so that $n_2 = 2t_2 + 3\varepsilon_2$, but the path corresponding to
$b^{t_1}c^{\varepsilon_1}c^{\varepsilon_2}b^{t_2}$ is not in $\R^{n_1+n_2}$.
We now show that $b^{t_1}c^{\varepsilon_1}c^{\varepsilon_2}b^{t_2} \in \langle\mathcal{H}_\Delta\rangle$.

If $\varepsilon_1 = \varepsilon_2 = 1$ then $c^2 \in \mathcal{H}_\Delta$ so $b^{t_1}c^{\varepsilon_1}c^{\varepsilon_2}b^{t_2} \in \langle\mathcal{H}_\Delta\rangle$.

If $\varepsilon_1 = \varepsilon_2 = 0$ then $b^{t_1}= b_1\cdots b_{t_1}$ corresponds to the path $r_{t_1}\cdots r_1 \in \R^{2t_1}$ and $b^{t_2}=b'_1\cdots b'_{t_2}$ corresponds to the path $r'_{t_2}\cdots r'_1 \in \R^{2t_2}$ where $b_i$ (respectively, $b'_i$) is represented by the path $r_i$ (respectively, $r'_i$) in $\R^2$ but $r'_{t_2}\cdots r'_1r_{t_1}\cdots r_1 \not\in \R^{2(t_1+t_2)}$.
The path $r'_{t_2}\cdots r'_1$ is represented by the overlap sequence
$$\xymatrix@W=0pt@M=0.3pt{
\ar@{^{|}-^{|}}@<-1.25ex>[rrr]_{r'_{t_2}} &
\ar@{_{|}-_{|}}@<1.25ex>[rrr]^{s'_1} & & \ar@{^{|}-^{|}}@<-1.25ex>[rrr]_{r'_{t_2-1}} & & & & \cdots & \ar@{_{|}-_{|}}@<1.25ex>[rrr]^{s'_{t_2-1}} & & \ar@{^{|}-^{|}}@<-1.25ex>[rrr]_{r'_1} &\ar@{{<}-{>}}[rr]^{u} & &
}$$
and the path $r_{t_1}\cdots r_1$ by the overlap sequence
$$\xymatrix@W=0pt@M=0.3pt{
\ar@{^{|}-^{|}}@<-1.25ex>[rrr]_{r_{t_1}}\ar@{{<}-{>}}[r]^{v} &
\ar@{_{|}-_{|}}@<1.25ex>[rrr]^{s_1} & & \ar@{^{|}-^{|}}@<-1.25ex>[rrr]_{r_{t_1-1}} & & & & \cdots & \ar@{_{|}-_{|}}@<1.25ex>[rrr]^{s_{t_1-1}} & & \ar@{^{|}-^{|}}@<-1.25ex>[rrr]_{r_1} & & &
}$$
where $\ell(u) = D-A, \ell(v) =A$, and $s_i, s'_i \in \R^2$.
We claim that $r'_1r_{t_1} \not\in \R^4$. For otherwise, there is an overlap sequence representing $r'_1r_{t_1} \in \R^4$ of the form
$$\xymatrix@W=0pt@M=0.3pt{
\ar@{^{|}-^{|}}@<-1.25ex>[rrr]_{r'_1} &
\ar@{_{|}-_{|}}@<1.25ex>[rrr]^{s} & & \ar@{^{|}-^{|}}@<-1.25ex>[rrr]_{r_{t_1}} & & &
}$$
for some $s \in R^2$. But then $$\xymatrix@W=0pt@M=0.3pt{
\ar@{^{|}-^{|}}@<-1.25ex>[rrr]_{r'_1} &
\ar@{_{|}-_{|}}@<1.25ex>[rrr]^{s} & & &
}$$
represents an element in $\R^3$ and is thus of length $D+A$. Hence
$s = uv$ as paths in $K{\mathcal Q}$, and so $r'_{t_2}\cdots r'_1r_{t_1}\cdots r_1 \in \R^{2(t_1+t_2)}$,
a contradiction. Thus, $r'_1r_{t_1} \not\in \R^4$ and we have that $b_{t_1}b'_1 = 0$
in $\Ext^4_\L(\L_0,\L_0)$. So $b_{t_1}b'_1 \in \mathcal{H}_\Delta$ and
$b^{t_1}b^{t_2}= b_1\cdots b_{t_1}b'_1\cdots b'_{t_2} \in \langle\mathcal{H}_\Delta\rangle$.

If $\varepsilon_1 = 1, \varepsilon_2 = 0$, then, from the above discussion, we may write
$b^{t_1}c$ in the form $cb^{t_1} + {\mathfrak h}$ where ${\mathfrak h} \in \langle\mathcal{H}_\Delta\rangle$, and we need to show that $cb^{t_1}b^{t_2} \in \langle\mathcal{H}_\Delta\rangle$.
Keeping the above notation, $cb^{t_1} = cb_1\cdots b_{t_1}$ corresponds to the path
$r_{t_1}\cdots r_1R^3 \in \R^{2t_1+3}$, $b^{t_2} = b'_1\cdots b'_{t_2}$ corresponds to the path $r'_{t_2}\cdots r'_1 \in \R^{2t_2}$ but
$r'_{t_2}\cdots r'_1r_{t_1}\cdots r_1R^3 \not\in \R^{2(t_1+t_2)+3}$, where $c$ is represented by $R^3\in \R^3$.
The path $r'_{t_2}\cdots r'_1$ is represented by the overlap sequence
$$\xymatrix@W=0pt@M=0.3pt{
\ar@{^{|}-^{|}}@<-1.25ex>[rrr]_{r'_{t_2}} &
\ar@{_{|}-_{|}}@<1.25ex>[rrr]^{s'_1} & & \ar@{^{|}-^{|}}@<-1.25ex>[rrr]_{r'_{t_2-1}} & & & & \cdots & \ar@{_{|}-_{|}}@<1.25ex>[rrr]^{s'_{t_2-1}} & & \ar@{^{|}-^{|}}@<-1.25ex>[rrr]_{r'_1} &\ar@{{<}-{>}}[rr]^{u} & &
}$$
and the path $r_{t_1}\cdots r_1R^3$ by the overlap sequence
$$\xymatrix@W=0pt@M=0.3pt{
\ar@{^{|}-^{|}}@<-1.25ex>[rrr]_{r_{t_1}}\ar@{{<}-{>}}[r]^{v} &
\ar@{_{|}-_{|}}@<1.25ex>[rrr]^{s_1} & & \ar@{^{|}-^{|}}@<-1.25ex>[rrr]_{r_{t_1-1}} & & & & \cdots & \ar@{_{|}-_{|}}@<1.25ex>[rrr]^{s_{t_1-1}} & & \ar@{^{|}-^{|}}@<-1.25ex>[rrr]_{r_1} & \ar@{_{|}-_{|}}@<1.25ex>[rrr]^{s_{t_1}} & &
\ar@{^{|}-^{|}}@<-1.25ex>[rrr]_{r}\ar@{{<}-{>}}@<-3.5ex>[rrrr]_{R^3} & \ar@{_{|}-_{|}}@<1.25ex>[rrr]^{s} & & &
}$$
where $\ell(u) = D-A, \ell(v) =A$, and $r, s, s_i, s'_i \in \R^2$.
Using the same arguments as in the case $\varepsilon_1 = \varepsilon_2 = 0$,
we see that $r'_1r_{t_1} \not\in \R^4$. Thus $b_{t_1}b'_1 \in \mathcal{H}_\Delta$ and
$cb^{t_1}b^{t_2}= cb_1\cdots b_{t_1}b'_1\cdots b'_{t_2} \in \langle\mathcal{H}_\Delta\rangle$.

If $\varepsilon_1 = 0, \varepsilon_2 = 1$ then, from the above discussion, we may write
$cb^{t_2}$ in the form $b^{t_2}c + {\mathfrak h}$ where ${\mathfrak h} \in \langle\mathcal{H}_\Delta\rangle$, and we need to show that $b^{t_1}b^{t_2}c \in \langle\mathcal{H}_\Delta\rangle$.
Keeping the above notation, $b^{t_1} = b_1\cdots b_{t_1}$ corresponds to the path
$r_{t_1}\cdots r_1 \in \R^{2t_1}$, $b^{t_2}c = b'_1\cdots b'_{t_2}c$ corresponds to the path $R^3r'_{t_2}\cdots r'_1 \in \R^{2t_2+3}$ but
$R^3r'_{t_2}\cdots r'_1r_{t_1}\cdots r_1 \not\in \R^{2(t_1+t_2)+3}$, where $c$ is represented by $R^3\in \R^3$.
The path $R^3r'_{t_2}\cdots r'_1$ is represented by the overlap sequence
$$\xymatrix@W=0pt@M=0.3pt{
\ar@{_{|}-_{|}}@<1.25ex>[rrr]^{s}\ar@{{<}-{>}}@<-3.5ex>[rrrr]_{R^3} &
\ar@{^{|}-^{|}}@<-1.25ex>[rrr]_{r} & &
\ar@{_{|}-_{|}}@<1.25ex>[rrr]^{s'_1} & \ar@{^{|}-^{|}}@<-1.25ex>[rrr]_{r'_{t_2}} & & & & \cdots & \ar@{_{|}-_{|}}@<1.25ex>[rrr]^{s'_{t_2}} & \ar@{^{|}-^{|}}@<-1.25ex>[rrr]_{r'_1} & & \ar@{{<}-{>}}[r]^{u} &
}$$
and the path $r_{t_1}\cdots r_1$ by the overlap sequence
$$\xymatrix@W=0pt@M=0.3pt{
\ar@{^{|}-^{|}}@<-1.25ex>[rrr]_{r_{t_1}}\ar@{{<}-{>}}[r]^{v} &
\ar@{_{|}-_{|}}@<1.25ex>[rrr]^{s_1} & & \ar@{^{|}-^{|}}@<-1.25ex>[rrr]_{r_{t_1-1}} & & & & \cdots & \ar@{_{|}-_{|}}@<1.25ex>[rrr]^{s_{t_1-1}} & & \ar@{^{|}-^{|}}@<-1.25ex>[rrr]_{r_1} & & &
}$$
where $\ell(u) = \ell(v) =A$, and $r, s, s_i, s'_i \in \R^2$.

We claim that $r'_1r_{t_1} \not\in \R^4$. For otherwise, there is an overlap sequence representing $r'_1r_{t_1} \in \R^4$ of the form
$$\xymatrix@W=0pt@M=0.3pt{
\ar@{^{|}-^{|}}@<-1.25ex>[rrr]_{r'_1} &
\ar@{_{|}-_{|}}@<1.25ex>[rrr]^{s'} & & \ar@{^{|}-^{|}}@<-1.25ex>[rrr]_{r_{t_1}} & \ar@{{<}-{>}}[rr]^{w} & &
}$$
for some $s' \in R^2$ and with $r_{t_1} = vw$ as paths.
But then $r_{t_1}$ overlaps $s'$ with overlap $s'w$ of length $2D-A$.
By assumption, $D \neq 2A$, so, as every element of $R^3$ is of length $D+A$, this
is not a maximal overlap. Thus there is $\tilde{s} \in \R^3$ and a maximal overlap of $r_{t_1}$ with $\tilde{s}$ as follows:
$$\xymatrix@W=0pt@M=0.3pt{
\ar@{_{|}-_{|}}@<1.25ex>[rrr]^{\tilde{s}}\ar@{{<}-{>}}[r]_{u} &
\ar@{^{|}-^{|}}@<-1.25ex>[rrr]_{r_{t_1}} & & &
}$$
Hence, the path $R^3r'_{t_2}\cdots r'_1r_{t_1}$ is in $\R^{2(t_2+1)+3}$ and is represented by the maximal overlap sequence
$$\xymatrix@W=0pt@M=0.3pt{
\ar@{_{|}-_{|}}@<1.25ex>[rrr]^{s}\ar@{{<}-{>}}@<-3.5ex>[rrrr]_{R^3} &
\ar@{^{|}-^{|}}@<-1.25ex>[rrr]_{r} & &
\ar@{_{|}-_{|}}@<1.25ex>[rrr]^{s'_1} & \ar@{^{|}-^{|}}@<-1.25ex>[rrr]_{r'_{t_2}} & & & & \cdots & \ar@{_{|}-_{|}}@<1.25ex>[rrr]^{s'_{t_2}} & \ar@{^{|}-^{|}}@<-1.25ex>[rrr]_{r'_1} & & \ar@{_{|}-_{|}}@<1.25ex>[rrr]^{\tilde{s}} & \ar@{^{|}-^{|}}@<-1.25ex>[rrr]_{r_{t_1}} & & &
}$$
Now, we may continue inductively and show that $R^3r'_{t_2}\cdots r'_1r_{t_1}\cdots r_1$ is in $\R^{2(t_2+t_1)+3}$, which is a contradiction.
Hence $r'_1r_{t_1} \not\in \R^4$. Thus $b_{t_1}b'_1 \in \mathcal{H}_\Delta$ and
$b^{t_1}b^{t_2}c = b_1\cdots b_{t_1}b'_1\cdots b'_{t_2}c \in \langle\mathcal{H}_\Delta\rangle$.

This shows that, in all cases, $f^{n_1}_{i_1}f^{n_2}_{i_2} \in \langle\mathcal{H}_\Delta\rangle$.

Now suppose that $f^{n_1}_{i_1} \cdots f^{n_r}_{i_r} - f^{m_1}_{i_1} \cdots f^{m_s}_{i_s}$ is in $I_\Delta$.
Then, as paths in
$K{\mathcal Q}$, we have $R^{n_r}_{i_r} \cdots R^{n_1}_{i_1} = R^{m_s}_{i_s} \cdots R^{m_1}_{i_1}$
and $n_1 + \cdots + n_r = m_1 + \cdots + m_s$. Set $n = n_1 + \cdots + n_r$.
Clearly we may assume that neither $f^{n_1}_{i_1} \cdots f^{n_r}_{i_r}$ nor $f^{m_1}_{i_1} \cdots f^{m_s}_{i_s}$ is in $I_\Delta$.
So, we may write $f^{n_1}_{i_1} \cdots f^{n_r}_{i_r} = b^{t_1}c^{\varepsilon_1} + {\mathfrak h_1}$ and
$f^{m_1}_{i_1} \cdots f^{m_s}_{i_s} = b^{t_2}c^{\varepsilon_2} + {\mathfrak h_2}$,
where $\varepsilon_1, \varepsilon_2 \in \{0, 1\}$ and ${\mathfrak h}_1, {\mathfrak h}_2 \in \langle\mathcal{H}_\Delta\rangle$.
However, as paths in $K{\mathcal Q}$, the element of $\R^n$ corresponding to $b^{t_1}c^{\varepsilon_1}$
is equal to $R^{n_r}_{i_r} \cdots R^{n_1}_{i_1}$, and the element of $\R^n$ corresponding to $b^{t_2}c^{\varepsilon_2}$ is equal to $R^{m_s}_{i_s} \cdots R^{m_1}_{i_1}$.
Since these paths are equal, it follows that $b^{t_1}c^{\varepsilon_1} = b^{t_2}c^{\varepsilon_2}$.
Thus $f^{n_1}_{i_1} \cdots f^{n_r}_{i_r} - f^{m_1}_{i_1} \cdots f^{m_s}_{i_s} =
{\mathfrak h_1} - {\mathfrak h_2} \in \langle\mathcal{H}_\Delta\rangle$.

Thus all the generators of $I_\Delta$ lie in $\langle\mathcal{H}_\Delta\rangle$, and we have shown
that $I_\Delta = \langle\mathcal{H}_\Delta\rangle$.
It is now clear that $\mathcal{H}_\Delta$ is a minimal generating set for $I_\Delta$.
\end{proof}

To show that $\mathcal{H}_\Delta$ is a reduced Gr\"{o}bner basis for $I_\Delta$, we need an admissible order. Let $\mathcal{B}$ be the basis of $K\Delta$ which consists of all paths.
Label the elements of each set $f^0, f^1, f^2$ and $f^3$ as follows:
$$\begin{array}{l}
f^0  =  \{e_1, e_2, \dots, e_m\}\\
f^1  =  \{a_1, a_2, \dots, a_r\}\\
f^2  =  \{b_1, b_2, \dots, b_s\}\\
f^3  =  \{c_1, c_2, \dots, c_t\}.
\end{array}$$
Order the vertices and arrows of
$\Delta$ by
$$a_1 > a_2 > \cdots > a_r > b_1 > b_2 > \cdots > b_s > c_1 > c_2 > \cdots > c_t > e_1 > e_2 > \cdots > e_m.$$
Let $>$ be the left length lexicographic order on $\mathcal{B}$ as given in Section~\ref{sec:GB}, so that $>$ is an admissible order.

\begin{prop}\label{prop:monredGB}
Let $\Lambda = K{\mathcal Q}/I$ be a $(D,A)$-stacked monomial algebra with $D>2, A> 1$ and $D \neq 2A$. Keeping the above notation, the set $\mathcal{H}_\Delta$ is a quadratic reduced Gr\"{o}bner basis of $I_\Delta$.
\end{prop}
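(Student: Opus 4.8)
The plan is to verify the three hypotheses of Theorem~\ref{reducedG} for the set $\mathcal{H}_\Delta$, after which both conclusions follow at once: $\mathcal{H}_\Delta$ is a reduced Gr\"obner basis of $I_\Delta$, and since every element of $\mathcal{H}_\Delta$ is by construction a path (or a difference of two paths) of length $2$ in $K\Delta$, it is automatically \emph{quadratic}. By Proposition~\ref{prop:monredgen} I already know that $\mathcal{H}_\Delta$ generates $I_\Delta$, and each generator is a non-zero uniform element (the $f^n_i$ are uniform, and in each binomial $bc-c'b'$ the two terms are equal in $E(\L)$, hence share source and target). So it remains to check conditions (i), (ii) and (iii) of Theorem~\ref{reducedG}.

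Conditions (i) and (ii) are finite, direct checks. For (i), every monomial generator has coefficient $1$, and for a binomial $bc-c'b'$ the chosen order has $b>c$, so comparing first letters gives $bc>c'b'$; thus $\Tip(bc-c'b')=bc$ and $\CTip=1$. For (ii), since all elements of $\mathcal{H}_\Delta$ are quadratic, one generator reduces over another only if the tip of the latter lies in the support of the former; I would check that the tips are pairwise distinct and that no tip equals a non-tip summand $c'b'$ of a binomial. The key points are that a given path $bc$ is either a zero-monomial or a binomial tip but never both, and that a non-tip term $c'b'$ is non-zero in $E(\L)$, hence cannot coincide with a zero-monomial $cb$ (and is of the wrong shape to be any other tip).

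The substance of the proof is condition (iii), the reduction of overlap differences, organised according to the shared arrow of the two tips. Overlaps between two monomial generators telescope to $0$: if $\Tip(h_1)=xy$ and $\Tip(h_2)=yz$ are monomials, the overlap difference is $(xy)z-x(yz)=0$. There are no overlaps between two binomials, since their tips have shape $bc$ and an overlap would force an arrow of $f^3$ to equal an arrow of $f^2$. Hence every remaining overlap pairs a single commutation binomial $bc-c'b'$ with a monomial generator, the binomial occurring either as first factor (sharing its terminal arrow $c$ with a partner $ca$, $cc'$, or a zero $cb''$) or as second factor (sharing its initial arrow $b$ with a partner $ab$, a zero $b''b$, or a zero $cb$). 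In each case I would compute the overlap difference explicitly; when the partner is $ab$, $ca$, $cc'$ or a zero $cb$, the leftover term is $ac'b'$, $-c'b'a$, $-c'b'c''$ or $cc'b'$ respectively, and each reduces to $0$ via an unconditional relation ($ac$, $ba$ or $cc'$), after at most one further commutation in the $cc'$ case.

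The main obstacle is the two remaining families, where the binomial meets a zero $b$-monomial: pairing $bc-c'b'$ with a relation $cb''=0$ leaves $-c'b'b''$, and pairing $b''b=0$ with $bc-c'b'$ leaves $b''c'b'$. Such a leftover is irreducible unless the induced degree-$4$ product $b'b''$ (respectively, after one commutation $b''c'=c'''b'''$, a further product $b'''b'$) is itself zero, so the crux is to prove that these secondary products necessarily vanish. I would establish this by passing to paths in $K{\mathcal Q}$ under the bijection $f^n\leftrightarrow\R^n$: since $cb''=0$, we have $bcb''=b(cb'')=0$ in $E(\L)$, so the length-$(3D+A)$ path $R^2_bR^3_cR^2_{b''}=R^3_{c'}R^2_{b'}R^2_{b''}$ does not lie in $\R^7$; if $b'b''$ were non-zero, i.e. $R^2_{b'}R^2_{b''}\in\R^4$, then the maximal-overlap construction of the $\R^n$, together with the uniqueness of their factorisation and Proposition~\ref{prop:R5} — and crucially the hypothesis $D\neq 2A$, used exactly as in the final case of the proof of Proposition~\ref{prop:monredgen} — would allow this path to be reassembled inside $\R^7$, a contradiction. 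Hence the secondary product vanishes, the leftover is a zero-monomial in $\mathcal{H}_\Delta$, and the overlap difference reduces to $0$. Once every overlap difference is shown to reduce to $0$, Theorem~\ref{reducedG} yields that $\mathcal{H}_\Delta$ is a reduced Gr\"obner basis of $I_\Delta$; as it consists of quadratic elements, it is a quadratic reduced Gr\"obner basis.
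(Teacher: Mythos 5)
Your proposal is correct and follows the paper's framework exactly up to the crux: you apply Theorem~\ref{reducedG} on top of Proposition~\ref{prop:monredgen}, dispose of conditions (i) and (ii) by inspection of the order, reduce condition (iii) to overlaps of one commutation binomial $bc-c'b'$ against one monomial, and your six-case classification together with the resolution of the four easy cases (partners $ab$, $ca$, $cc'$ and a zero $cb$, using the unconditional relations $ac$, $ba$, $cc'$ plus at most one commutation) coincides with the paper's. Where you genuinely diverge is in the two hard cases (binomial against a zero monomial $c\tilde b$ or $\tilde b b$). The paper settles these directly one degree up: if the secondary product $b'\tilde b$ were non-zero, so that $\tilde rr'\in\R^4$, its maximal overlap sequence extends by a single further element of $\R^2$ to exhibit $\tilde rR\in\R^5$, contradicting $c\tilde b=0$ at homological degree $5$. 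You instead use associativity ($c'b'\tilde b=bc\tilde b=b(c\tilde b)=0$) and then need the stronger ``reassembly'' claim that non-vanishing of the junction products ($\tilde rr'\in\R^4$ and $r'R'\in\R^5$) forces $\tilde rr'R'\in\R^7$, producing a contradiction at degree $7$. That claim is in fact true, and provable in the spirit you indicate: the two maximal overlap sequences concatenate, and for a $(D,A)$-stacked monomial algebra the maximality condition at every stage beyond the $\R^3$ stage is automatic, since all elements of $\R^2$ have length $D$ while the relevant tails-plus-extensions have length exactly $D$, so no proper prefix can end in an element of $\R^2$; note that the $\R^5$ stage of this concatenation is precisely the paper's degree-$5$ contradiction, so your route is a longer run of the same rigidity argument, with the benefit that it treats both hard families uniformly. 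Two caveats: first, you assert the reassembly rather than prove it, and it is the only non-trivial point of the whole proposition, so as written your argument is a correct and completable sketch rather than a proof; second, your claim that $D\neq 2A$ enters ``exactly as in the final case of Proposition~\ref{prop:monredgen}'' is misplaced --- in this configuration the junction overlaps are already pinned down by the non-vanishing of $b'\tilde b$ and $c'b'=bc$, and no separate appeal to $D\neq 2A$ is needed there (that hypothesis does its work elsewhere, in Proposition~\ref{prop:R5}, in Proposition~\ref{prop:monredgen}, and in making the hat-grading and $\mathcal{H}_\Delta$ well defined).
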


\begin{proof}
For ease of notation within the proof, write $\mathcal{H}$ for $\mathcal{H}_\Delta$. From Proposition~\ref{prop:monredgen}, the set $\mathcal{H}$ generates the ideal $I_\Delta$, and it clearly consists of uniform quadratic elements of $K\Delta$. We apply Theorem~\ref{reducedG} to show that $\mathcal{H}$ is a reduced Gr\"{o}bner basis of $I_\Delta$. By inspection, $\CTip(h) = 1$ for all $h \in {\mathcal H}$ so (i) holds.

To show (ii), let $h, h'$ be distinct elements of $\mathcal{H}$. Since both $h$ and $h'$ have length 2, if $h$ reduces over $h'$ then $\Tip(h') \in \Supp(h)$. But, by inspection of $\mathcal{H}$ we see this is never the case. Thus $h$ does not reduce over $h'$, and (ii) holds.

For (iii), we consider the overlap difference for two (not necessarily distinct) members of
$\mathcal{H}$. By inspection of $\mathcal{H}$, it is clear that in order to have an
overlap difference of elements $h, h' \in \mathcal{H}$, then at least one of $h, h'$ is a monomial.
Moreover, we note that the overlap difference of two monomial elements always reduces to zero. Thus we only need to consider the overlap difference of one monomial element and one non-monomial element.

We have the following six cases.
First, suppose $h = bc-c'b'$. If $h' = ca$, then $o(h, h', a, b) = -c'b'a \Rightarrow_{\mathcal H} 0$ since $b'a \in {\mathcal H}$. For $h' = c\tilde{b}$ we have $o(h, h', \tilde{b}, b) = -c'b'\tilde{b}$. We show that $b'\tilde{b} \in {\mathcal H}$.

Let $r, r', \tilde{r}$ in $\R^2$ represent $b, b', \tilde{b}$ respectively, and let $R, R'$ in $\R^3$ represent $c, c'$ respectively.
Then, $bc$ corresponds to the path $Rr \in \R^5$, $c'b'$ corresponds to the path $r'R' \in \R^5$ with
$Rr = r'R'$ as paths in $K{\mathcal Q}$, and $\tilde{r}R \not\in \R^5$. We can represent $Rr$ and $r'R'$ by the overlap sequence
$$\xymatrix@W=0pt@M=0.3pt{
\ar@{^{|}-^{|}}@<-1.25ex>[rrr]_{r'}\ar@{{<}-{>}}[r]_{u}\ar@{{<}-{>}}@<3.5ex>[rrrr]^{R} & \ar@{_{|}-_{|}}@<1.25ex>[rrr]^{s_1} & &
\ar@{^{|}-^{|}}@<-1.25ex>[rrr]_{r_1}\ar@{{<}-{>}}@<-3.5ex>[rrrr]_{R'} & \ar@{_{|}-_{|}}@<1.25ex>[rrr]^{r} & & &
}$$
with $r_1, s_1 \in \R^2$ and $\ell(u) = A$.
If $b'\tilde{b} \not\in {\mathcal H}$ so that $\tilde{r}r' \in \R^4$, then $\tilde{r}r'$ is represented by the maximal overlap sequence
$$\xymatrix@W=0pt@M=0.3pt{
\ar@{^{|}-^{|}}@<-1.25ex>[rrr]_{\tilde{r}} & \ar@{_{|}-_{|}}@<1.25ex>[rrr]^{s} & &
\ar@{^{|}-^{|}}@<-1.25ex>[rrr]_{r'}\ar@{{<}-{>}}[r]_{u} & & &
}$$
for some $s \in \R^2$. Then it is immediate that
$\tilde{r}R$ is represented by the maximal overlap sequence
$$\xymatrix@W=0pt@M=0.3pt{
\ar@{^{|}-^{|}}@<-1.25ex>[rrr]_{\tilde{r}} & \ar@{_{|}-_{|}}@<1.25ex>[rrr]^{s} & &
\ar@{^{|}-^{|}}@<-1.25ex>[rrr]_{r'}\ar@{{<}-{>}}[r]_{u}\ar@{{<}-{>}}@<-3.5ex>[rrrr]_{R} & \ar@{_{|}-_{|}}@<1.25ex>[rrr]^{s_1} & & &
}$$
and so $\tilde{r}R \in \R^5$; this is a contradiction.
Hence $b'\tilde{b} \in {\mathcal H}$ and
$o(h, h', \tilde{b}, b) = -c'b'\tilde{b} \Rightarrow_{\mathcal H} 0$.

For $h' = c\tilde{c}$ we have $o(h, h', \tilde{c}, b) = -c'b'\tilde{c}$.
If $b'\tilde{c} \in {\mathcal H}$ then $-c'b'\tilde{c} \Rightarrow_{\mathcal H} 0$.
Otherwise, if $b'\tilde{c} \not\in {\mathcal H}$ then, from Proposition~\ref{prop:R5}(ii),
we have that $b'\tilde{c} - \hat{c}\hat{b} \in {\mathcal H}$ for some $\hat{b}\in f^2, \hat{c} \in f^3$.
Thus $-c'b'\tilde{c} \Rightarrow_{\mathcal H} -c'\hat{c}\hat{b}$.
But $c'\hat{c} \in {\mathcal H}$ and thus $-c'b'\tilde{c} \Rightarrow_{\mathcal H} 0$.
Hence, on both occasions, $o(h, h', \tilde{c}, b) \Rightarrow_{\mathcal H} 0$.

The remaining cases are where $h' = bc-c'b'$ and $h \in \{ab, \tilde{b}b, \tilde{c}b\}$; these are similar to the above and are left to the reader.
Thus (iii) holds and $\mathcal{H}_\Delta$ is a quadratic reduced Gr\"{o}bner basis of $I_\Delta$.
\end{proof}

The next result now follows immediately from Theorem~\ref{Kosz}(ii).

\begin{thm}\label{thm:main_monomial_thm}
Let $\L= K{\mathcal Q}/I$ be a $(D,A)$-stacked monomial algebra with $D > 2, A > 1$ and $D \neq 2A$. Let $\hat{E}(\L)$ be the Ext algebra of $\L$ with the hat-degree grading. Then $\hat{E}(\L)$ is a Koszul algebra.
\end{thm}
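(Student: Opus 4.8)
The plan is to deduce the result directly from the Gr\"obner basis description of $I_\Delta$ obtained in Proposition~\ref{prop:monredGB}, combined with the Koszulity criterion of Theorem~\ref{Kosz}(ii). Since all the hard work has been carried out in establishing that $\mathcal{H}_\Delta$ is a quadratic reduced Gr\"obner basis, the only genuinely new point to check is that the grading for which we want Koszulity---the hat-degree grading of Definition~\ref{regrading}---is precisely the grading that the Gr\"obner basis machinery sees.

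First I would make the identification of gradings explicit. Under the isomorphism $\hat{E}(\L) \cong K\Delta/I_\Delta$, the arrow set of $\Delta$ is $\Delta_1 = f^1 \cup f^2 \cup f^3$, and by construction this is exactly a basis for $\hat{E}(\L)_1$. Hence a path in $K\Delta$ of (arrow-)length $\ell$ maps into $\hat{E}(\L)_\ell$, so the length grading on $K\Delta$ coincides with the hat-degree grading on $\hat{E}(\L)$. In particular, Koszulity of $\hat{E}(\L)$ with respect to the hat-degree grading is the same as Koszulity of $K\Delta/I_\Delta$ with respect to the length grading on $K\Delta$, which is the notion used in Theorem~\ref{Kosz}.

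Next I would invoke Proposition~\ref{prop:monredGB}, which establishes that $\mathcal{H}_\Delta$ is a quadratic reduced Gr\"obner basis of $I_\Delta$ under the admissible order fixed above; in particular $I_\Delta$ is a quadratic ideal. Applying Theorem~\ref{Kosz}(ii) to $K\Delta/I_\Delta$ then yields that $K\Delta/I_\Delta$ is a Koszul algebra, and transporting this along the isomorphism gives that $\hat{E}(\L)$ is Koszul with the hat-degree grading, as required.

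The proof therefore reduces to the already-completed Proposition~\ref{prop:monredGB}, and there is no substantial remaining obstacle; the one place calling for care is the bookkeeping in the first step. Because $\Delta$ has arrows of three different homological degrees (coming from $f^1, f^2, f^3$) all assigned hat-degree $1$, one must confirm that the relevant grading is the length grading on $K\Delta$ rather than the homological grading inherited from $E(\L)$. One should also record that the hypotheses $D>2$, $A>1$, $D\neq 2A$---together with $D \neq A+1$, which follows from $A \mid D$ when $\gldim\L\geqslant 4$---are exactly those ensuring that Definition~\ref{regrading} yields a genuine grading and that Proposition~\ref{prop:monredGB} applies, so that $\mathcal{H}_\Delta$ is indeed quadratic.
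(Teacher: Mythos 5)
Your proposal is correct and is essentially identical to the paper's own proof, which likewise deduces the theorem immediately from Proposition~\ref{prop:monredGB} together with Theorem~\ref{Kosz}(ii). Your extra bookkeeping identifying the hat-degree grading on $\hat{E}(\L)$ with the length grading on $K\Delta$ is a sound clarification of a step the paper leaves implicit, but it does not change the argument.
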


Following the discussion at the beginning of this section and using~\cite[Theorem 7.1]{bib:GMMZ}, we have the following theorem.

\begin{thm}\label{thm:main_monomial_thm_2}
Let $\L= K{\mathcal Q}/I$ be a $(D,A)$-stacked monomial algebra with $D \geqslant 2, A \geqslant 1$. Assume that $D \neq 2A$ when $A > 1$. Then there is a regrading of the Ext algebra of $\L$ so that the regraded Ext algebra is a Koszul algebra.
\end{thm}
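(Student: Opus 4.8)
The plan is to prove the statement by a short case analysis on the parameters $D$ and $A$, since in each regime the Koszulity of a suitably regraded Ext algebra is either classical, available in the literature, or already established earlier in this paper. The hypotheses $D \geqslant 2$, $A \geqslant 1$, together with the assumption that $D \neq 2A$ whenever $A > 1$, are covered by the three mutually exhaustive cases: $D = 2$; $D > 2$ with $A = 1$; and $D > 2$ with $A > 1$ (in which the standing hypothesis forces $D \neq 2A$).

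First I would treat $D = 2$. Here $I$ is generated by paths of length $2$, so $\L$ is a quadratic monomial algebra and hence Koszul; it is then classical, via Koszul duality, that $E(\L)$ is itself a Koszul algebra, and the homological grading already serves as the required regrading. Next, for $D > 2$ with $A = 1$, the remark following Definition~\ref{d,a} identifies $\L$ as a $D$-Koszul algebra of Berger, and \cite[Theorem 7.1]{bib:GMMZ} shows that its Ext algebra, regraded by the hat-grading of \cite[Section 7]{bib:GMMZ}, is Koszul. Finally, suppose $D > 2$ and $A > 1$; then the hypothesis gives $D \neq 2A$, so the conditions of Theorem~\ref{thm:main_monomial_thm} are met and, with the hat-degree grading of Definition~\ref{regrading}, the regraded Ext algebra $\hat{E}(\L)$ is Koszul. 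In every case we have exhibited a regrading of $E(\L)$ under which the regraded algebra is Koszul, which proves the theorem.

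There is essentially no deep obstacle remaining at this stage: the substantive work lies in Theorem~\ref{thm:main_monomial_thm} and the Gr\"obner basis analysis of $\mathcal{H}_\Delta$ that precedes it, all of which is already in place, so the proof is an assembly of these pieces together with two standard citations. The only points requiring care are organisational: verifying that the three cases genuinely exhaust the stated hypotheses, and confirming that each cited result is applied with its own hypotheses satisfied. In particular one should note that the configuration $D = 2$ with $A > 1$ cannot arise for a monomial algebra, since relations of length $2$ force every element of $\R^3$ to have length $3$ and hence $A = 1$; consequently the case $D = 2$ always yields a Koszul algebra and needs no finer subdivision.
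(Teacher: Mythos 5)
Your proposal is correct and follows essentially the same route as the paper: the paper's proof of Theorem~\ref{thm:main_monomial_thm_2} is precisely the three-way case split given in the discussion opening Section~\ref{sec:extmonomial} ($D=2$ Koszul, hence $E(\L)$ Koszul; $D>2$, $A=1$ handled by \cite[Theorem 7.1]{bib:GMMZ}; $D>2$, $A>1$, $D\neq 2A$ handled by Theorem~\ref{thm:main_monomial_thm}). Your additional observation that $D=2$ forces $A=1$ for monomial algebras is a correct and harmless refinement of the same argument.
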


We conclude this section by showing the necessity of the condition $\gldim \L \geqslant 6$ in Proposition~\ref{NOregrading}.

\begin{example}\label{example gldim 6}
Let $\L = K\cQ/I$ where $\cQ$ is the quiver
$$\xymatrix{
1\ar[r]^{\a_1} & 2\ar[r]^{\a_2} & 3\ar[r] & \cdots\ar[r] & 10\ar[r]^{\a_{10}} & 11
}$$
and $I = \langle \a_1 \a_2 \a_3 \a_4, \a_3 \a_4 \a_5 \a_6, \a_5 \a_6\a_7 \a_8, \a_7 \a_8 \a_9 \a_{10} \rangle$.
Then $\L$ is a monomial algebra, so using \cite{bib:GS2}, we have that $\L$ is a $(4, 2)$-stacked
monomial algebra. Moreover, $\L$ has global dimension $5$. The set $\R^2$ is the minimal generating set for $I$ above, and the set $\R^3$ is
$$\R^3 = \{\a_1 \a_2 \a_3 \a_4\a_5 \a_6,\ \a_3 \a_4 \a_5 \a_6\a_7 \a_8,\ \a_5 \a_6\a_7 \a_8\a_9 \a_{10}\}.$$
Let
$$\begin{array}{rcl}
\hat{E}(\L)_0 & = & \Ext^0_{\Lambda} (\Lambda_0 , \Lambda_0)\\
\hat{E}(\L)_1 & = & \Ext^1_{\Lambda} (\Lambda_0 , \Lambda_0)\p \Ext^2_{\Lambda} (\Lambda_0 , \Lambda_0) \p \Ext^3_{\Lambda} (\Lambda_0 , \Lambda_0) \\
\hat{E}(\L)_2 & = & \Ext^{4}_{\Lambda} (\Lambda_0 , \Lambda_0) \p \Ext^{5}_{\Lambda} (\Lambda_0 , \Lambda_0).
\end{array}$$
Then the Ext algebra of $\Lambda$ is $\hat{E}(\L) = \hat{E}(\L)_0\p\hat{E}(\L)_1\p\hat{E}(\L)_2.$
Using Theorem~\ref{summary} and that $\Ext^6_{\L} (\L_0, \L_0)=0$, it follows that $\hat{E}_1(\L)\times\hat{E}_1(\L) = \hat{E}_2(\L)$. Thus the Ext algebra is a graded algebra with the hat-degree.

It may be easily verified that $\hat{E}(\L) = K\Delta/I_\Delta$ where $\Delta$ is the quiver
$$\xymatrix{
1 & 2\ar[l]^{a_1} &
3\ar[l]^{a_2} & 4\ar[l]^{a_3} &
5\ar[l]^{a_4}\ar@/_2pc/[llll]_{b_1} & 6\ar[l]^{a_5} &
7\ar[l]^{a_6}\ar@/_2pc/[llll]_{b_2}\ar@/^2pc/[llllll]^{c_1} &
8\ar[l]^{a_7} & 9\ar[l]^{a_8}\ar@/_2pc/[llll]_{b_3}\ar@/^2pc/[llllll]^{c_2} & 10\ar[l]^{a_9} & 11\ar[l]^{a_{10}}\ar@/_2pc/[llll]_{b_4}\ar@/^2pc/[llllll]^{c_3}
}$$
and $I_\Delta$ is generated by:
$$\begin{array}{l}
a_ia_{i-1} \mbox { for $i = 2, \dots, 10$}\\
a_5b_1, a_7b_2, a_9b_3, b_2a_2, b_3a_4, b_4a_6, a_7c_1, a_9c_2, c_2a_2, c_3a_4\\
b_4c_1 - c_3b_1
\end{array}$$
where the $a_i, b_i, c_i$ correspond to elements in $\Ext^1_{\L} (\L_0, \L_0), \Ext^2_{\L} (\L_0, \L_0), \Ext^3_{\L} (\L_0, \L_0)$ respectively.
Let ${\mathcal H}$ be the set of generators for $I_\Delta$ given above. Order the
vertices and arrows of $\Delta$ by
$$a_1 > \cdots > a_{10} > b_1 > \cdots > b_4 > c_1 > c_2 > c_3 > e_1 > \cdots > e_{11}.$$
Let $>$ be the left length lexicographic order on $\mathcal{B}$ as given in Section~\ref{sec:GB}, so that $>$ is an admissible order.
There are no overlaps of $b_4c_1 - c_3b_1$ with any element of ${\mathcal H}$; so the only overlaps of elements of ${\mathcal H}$ are of monomials in ${\mathcal H}$, and these immediately reduce to zero. It follows from Theorem~\ref{reducedG} that ${\mathcal H}$ is a reduced Gr\"obner basis for $I_\Delta$. From Theorem~\ref{Kosz}, we then have that $\hat{E}(\L)$ is a Koszul algebra with this regrading.
\end{example}

In the final section we consider the non-monomial case.

\section{An example of a non-monomial $(D,A)$-stacked algebra}\label{sec:notmonomial}

We conclude this paper with an example of a $(D,A)$-stacked algebra which is not monomial
but where the regraded Ext algebra is still Koszul. Our example has $D \neq 2A$ and is of infinite global dimension.

\begin{example}\label{nonfish}
Let $\cQ$ be the quiver given by
$$\xymatrix{
& & 3\ar[r]^{\a_3} & 4\ar[r]^{\a_4} & 5\ar[dr]^{\a_5} & & & & 14\ar[dd]^{\a_{15}}\\
& 2\ar[ur]^{\a_2} & & & & 6\ar[dr]^{\a_6} & & 13\ar[ur]^{\a_{14}} & \\
1\ar[ur]^{\a_1}\ar[dr]_{\a_7} & & & & & & 12\ar[ur]^{\a_{13}} & & 15\ar[dd]^{\a_{16}}\\
& 7\ar[dr]_{\a_8} & & & & 11\ar[ur]_{\a_{12}} & & 17\ar[ul]^{\a_{18}} & \\
& & 8\ar[r]_{\a_9}& 9\ar[r]_{\a_{10}} & 10\ar[ur]_{\a_{11}} & & & & 16\ar[ul]^{\a_{17}}
}$$
and let $\L = K\cQ / I$ where $I$ is generated by the elements
$$g^2_1 = \a_1 \a_2 \a_3 \a_4 \a_5 \a_6 - \a_7 \a_8 \a_9 \a_{10} \a_{11} \a_{12},\ g^2_2 = \a_3 \a_4 \a_5 \a_6 \a_{13} \a_{14},$$
$$g^2_3 = \a_5 \a_6 \a_{13} \a_{14} \a_{15} \a_{16}, \ g^2_4 = \a_9 \a_{10} \a_{11} \a_{12} \a_{13} \a_{14}, \ g^2_5 = \a_{11} \a_{12} \a_{13} \a_{14} \a_{15} \a_{16},$$
$$g^2_6 = \a_{13} \a_{14} \a_{15} \a_{16} \a_{17} \a_{18}, \ g^2_7 = \a_{15} \a_{16} \a_{17} \a_{18} \a_{13} \a_{14}, \ g^2_8 = \a_{17} \a_{18} \a_{13} \a_{14} \a_{15} \a_{16}.$$

The first step is to show that $\L$ is a $(6,2)$-stacked algebra. We start by finding a minimal projective resolution of $\L_0$ using the approach of \cite{bib:GSZ}. We define sets $g^n$ in $K\cQ$ for each $n \geqslant 0$ inductively as follows:
\begin{enumerate}
\item[$\bullet$] $g^0 = \{ g^0_i = e_i \mbox{ for } i = 1, \dots , 17 \}$, the set of vertices of $\cQ$.
\item[$\bullet$] $g^1 = \{ g^1_i = \a_i \mbox{ for } i = 1, \dots , 18 \}$, the set of arrows of $\cQ$.
\item[$\bullet$] $g^2 = \{ g^2_1, \dots , g^2_8\}$, the minimal generating set for $I$ as described above.
\item[$\bullet$] For $n=2r+1$ with $r\geqslant 1$,\\
$g^n = \{ g^n_1 = g^{n-1}_1\a_{13} \a_{14}, \ g^n_2 = g^{n-1}_2\a_{15} \a_{16}, \ g^n_3 = g^{n-1}_3\a_{17}\a_{18}, \ g^n_4 = g^{n-1}_4 \a_{15}\a_{16}, \\
\hspace*{1.5cm} g^n_5 = g^{n-1}_5\a_{17}\a_{18}, \ g^n_6 = g^{n-1}_6\a_{13}\a_{14} , \ g^n_7 = g^{n-1}_7\a_{15} \a_{16}, \ g^n_8 = g^{n-1}_8\a_{17} \a_{18}\}.$
\item[$\bullet$] For $n=2r$ with $r\geqslant 2$,\\
$ g^n = \{ g^n_1 = g^{n-1}_1\a_{15} \a_{16} \a_{17} \a_{18}, \ g^n_2 = g^{n-1}_2\a_{17} \a_{18} \a_{13} \a_{14}, \ g^n_3 = g^{n-1}_3\a_{13} \a_{14} \a_{15} \a_{16}, \\
\hspace*{1.5cm} g^n_4 = g^{n-1}_4 \a_{17} \a_{18} \a_{13} \a_{14}, \ g^n_5 = g^{n-1}_5\a_{13} \a_{14} \a_{15} \a_{16}, \ g^n_6 = g^{n-1}_6\a_{15} \a_{16} \a_{17} \a_{18}, \\
\hspace*{1.5cm} g^n_7 = g^{n-1}_7 \a_{17} \a_{18} \a_{13} \a_{14}, \ g^n_8 =  g^{n-1}_1\a_{13} \a_{14} \a_{15} \a_{16}\}$.
\end{enumerate}
Note that we may write $g^2_1 = g^1_1 \a_2 \a_3 \a_4 \a_5 \a_6 - g^1_7 \a_8 \a_9 \a_{10} \a_{11} \a_{12}$. It follows that each $x \in g^n$ may be written as $x=\sum_i g^{n - 1}_ir_i$
for some $r_i \in K\cQ$, and all $n \geqslant 1$. Let $P^n$ be the projective $\L$-module defined by
$P^n = \bigoplus_i t(g^n_i)\L$. Let $d^0: P^0 \to \L/\rrad$ be the canonical surjection and,
for $n \geqslant 1$, define $\L$-homomorphisms $d^n: P^n \to P^{n-1}$ by
$t(g^n_i) \mapsto t(g^{n - 1}_i)r_i$ where $t(g^{n - 1}_i)r_i$ is in the component of
$P^{n-1}$ corresponding to $t(g^{n-1}_i)$.
It may be checked that $(P^n, d^n)$ is a minimal projective resolution for $\L / \rrad$ as a
right $\L$-module. By considering the length of each $g^n$, it follows from Definition~\ref{d,a}
that $\L$ is a $(6,2)$-stacked algebra.

We identify each element in the set $g^n$ with a basis element in $\Ext^n_\L(\L_0, \L_0)$ in
the following way.
Let $n \geqslant 0$ and define $f^n_i$ to be the $\L$-homomorphism $P^n \to \L / \rrad$ given by
$$f^n_i : t(g^n_j) \mapsto
\left \{ \begin{array}{ll}
t(g^n_i) + \rrad & \mbox{ if }  i = j \\
0 & \mbox{ otherwise}.
\end{array} \right. $$
We set $f^n = \{ f^n_i \}$ so that $f^n$ is a basis of $\Ext^n_\L(\L_0, \L_0)$ for each $n \geqslant 0$.
It is now a straightforward exercise to compute the products in $E(\L)$. For example,
if $n = 2r$ with $r \geqslant 2$, then
$f^n_1 = f^2_6 \cdot f^{n-2}_1$, since $f^n_1 = f^2_6 \circ \mathcal{L}^2 f^{n-2}_1$ as maps, where the lifting $\mathcal{L}^2 f^{n-2}_1 : P^n \to P^2$ can be chosen as
$$\mathcal{L}^2 f^{n-2}_1 : t(g^n_j) \mapsto
\left \{ \begin{array}{ll}
t(g^2_6) & \mbox{ if }  j=1 \\
0 & \mbox{ otherwise}.
\end{array} \right. $$
Since $E(\L)$ is generated in degrees $0, 1, 2$ and $3$, we have that $f^0\cup f^1\cup f^2\cup f^3$ is a minimal generating set for $E(\L)$. Set $\a_i = f^1_i$, $\b_i = f^2_i$ and $\g_i = f^3_i$ for all $i$.
Then, $E(\L)$ is given by quiver and relations as $K\Gamma/I_\Gamma$, where $\Gamma$ is the quiver
$$\xymatrix{
& & e_3 \ar[dl]|{\a_{2}} & e_4 \ar[l]^{\a_{3}} & e_5\ar[l]^{\a_{4}} & & & & e_{14}\ar@/^2pc/[ddll]|{\gamma_6}\ar@/_8pc/[ddllllllll]|{\gamma_1}\ar@(ur,ul)[]|{\b_7} \ar@/_5pc/[ddddllllll]|{\b_4}\ar@/_2pc/[llllll]_{\hspace{-2cm}\b_2} \ar[dl]|{\a_{14}} \\
&  e_2\ar[dl]|{\a_{1}} & & & & e_6 \ar[ul]|{\a_{5}}& &  e_{13}\ar[dl]|{\a_{13}} & \\
e_1  & & & & & & e_{12}\ar@/^2pc/[ddrr]|{\gamma_8}\ar@/^3pc/[ddll]|{\gamma_5}\ar@[blue]@/_2pc/[uull]|{\gamma_3} \ar@(ur,ul)[]|{\b_6}\ar[ul]|{\a_{6}}\ar@/^2pc/[llllll]|{\b_1} \ar[dr]|{\a_{18}}\ar[dl]^{\a_{12}} & & e_{15} \ar[uu]|{\a_{15}} \\
& e_7\ar[ul]|{\a_{7}} & & & & e_{11} \ar[dl]|{\a_{11}} & & e_{17}\ar[dr]|{\a_{17}}  & \\
& & e_8\ar[ul]|{\a_{8}} & e_9\ar[l]|{\a_{9}} & e_{10}\ar[l]|{\a_{10}}  & & & & e_{16}\ar@/_1pc/[uuuu]|{\gamma_7}\ar@/^2pc/[llllll]|{\gamma_4}\ar@/^3pc/[uuuullllll]|{\gamma_2} \ar@(dr,dl)[]|{\b_8}\ar@/^1pc/[llll]|{\b_5}\ar@/_13pc/[uuuullll]|{\b_3} \ar[uu]|{\a_{16}}\\
}$$
and $I_\Gamma$ is the ideal generated by the set $\mathcal{H}$, whose elements are listed as follows:
\begin{enumerate}
\item[$\bullet$] $ \a_2 \a_1, \a_3 \a_2, \a_4 \a_3, \a_5 \a_4, \a_6 \a_5, \a_8 \a_7, \a_9 \a_8, \a_{10} \a_9, \a_{11} \a_{10}, \a_{12} \a_{11}, \a_{13} \a_{6}, \a_{13} \a_{12},\\ \a_{13} \a_{18}, \a_{13} \b_{1}, \a_{13} \b_6, \a_{13} \g_{3}, \a_{13} \g_{5}, \a_{13} \g_{8}, \a_{14} \a_{13}, \a_{15} \a_{14}, \a_{15} \b_2, \a_{15} \b_4, \a_{15} \b_7,\\ \a_{15} \g_1, \a_{15} \g_{6}, \a_{16} \a_{15}, \a_{17} \a_{16}, \a_{17} \b_3, \a_{17} \b_5, \a_{17} \b_8, \a_{17} \g_{2}, \a_{17} \g_{4}, \a_{17} \g_{7}, \a_{18} \a_{17}, $
\item[$\bullet$] $ \b_2 \a_{2}, \b_3 \a_{4}, \b_4 \a_{8}, \b_5 \a_{10}, \b_6 \a_{6}, \b_6 \a_{12}, \b_6 \a_{18}, \b_6 \gamma_3 - \gamma_8 \b_3 , \b_6 \gamma_5 - \gamma_8 \b_5 , \b_6 \gamma_8 - \gamma_8 \b_8,\\  \b_7 \a_{14},  \b_7 \gamma_1 - \gamma_6 \b_1, \b_7 \gamma_6 - \gamma_6 \b_6, \b_8 \a_{16}, \b_8 \gamma_2 -\gamma_7 \b_2, \b_8 \gamma_4 - \gamma_7 \b_4, \b_8 \gamma_7 - \gamma_7 \b_7, $
\item[$\bullet$] $ \g_2 \a_{2}, \g_3 \a_{4}, \g_4 \a_{8}, \g_5 \a_{10},  \g_6 \a_{6}, \g_6 \a_{12}, \g_6 \a_{18}, \g_6 \g_3, \g_6 \g_5, \g_6 \g_8,  \g_7 \a_{14}, \g_7 \g_1, \g_7 \g_6, \\ \g_8 \a_{16}, \g_8 \g_2, \g_8 \g_4, \g_8 \g_7$.
\end{enumerate}
In particular, $\mathcal{H}$ is a minimal generating set for $I_\Gamma$. This gives the following result.

\begin{prop}\label{A=E}
Let $\L$ and $K\Gamma/I_\Gamma$ be as above. Let $\hat{E}(\L)$ be the Ext algebra of $\L$ with the hat-degree grading. Then $\hat{E}(\L)\cong K\Gamma/I_\Gamma$ where $\a_i, \b_i$ and $\g_i$ are in degree $1$, corresponding to the basis elements of $\hat{E}(\L)_1$.
\end{prop}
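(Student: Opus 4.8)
The plan is to realise the presentation as a graded algebra isomorphism, obtained from the obvious map out of the path algebra $K\Gamma$ and then shown to be bijective in each hat-degree. First I would define the $K$-algebra homomorphism $\phi\colon K\Gamma\to E(\L)$ sending each vertex $e_i$ to $f^0_i$ and each arrow $\a_i,\b_i,\g_i$ to the basis element $f^1_i,f^2_i,f^3_i$ of $\Ext^1_\L(\L_0,\L_0),\Ext^2_\L(\L_0,\L_0),\Ext^3_\L(\L_0,\L_0)$ respectively. Since $E(\L)$ is generated in degrees $0,1,2$ and $3$ by Theorem~\ref{genin} and $f^0\cup f^1\cup f^2\cup f^3$ is the chosen minimal generating set, $\phi$ is surjective. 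As every generator lies in $\hat E(\L)_1$ and the hat-degree is a grading on $E(\L)$, the image of a path of length $\ell$ lies in $\hat E(\L)_\ell$; this is consistent precisely because of the vanishing guaranteed by Theorem~\ref{summary} (with $D=6$, $A=2$, so that $D>2$, $A>1$, $D\neq 2A$ and $D\neq A+1$), which forces the products $\a_i\a_j$, $\a_i\b_j$, $\b_i\a_j$ and $\g_i\g_j$, whose path length would otherwise disagree with their homological degree, to be zero. Since each element of $\mathcal H$ is homogeneous of hat-degree $2$, it suffices to prove $\ker\phi=I_\Gamma$.

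The inclusion $\mathcal H\subseteq\ker\phi$ I would verify relation by relation. Every monomial element of $\mathcal H$ records a vanishing Yoneda product and follows from Theorem~\ref{summary}: the $\a$-initial relations $\a\a'$, $\a\b$, $\a\g$ come from parts (i), (iii), (ii); the relations $\b\a$ and $\g\a$ from parts (iii) and (ii); and $\g\g'$ from part (iv); here one uses $A>1$, $D\neq A+1$ and $D\neq 2A$ as appropriate. The binomial relations $\b_i\g_j-\g_k\b_l$ carry the genuinely non-monomial content and are each checked by a direct Yoneda computation, lifting the relevant homomorphism through the resolution $(P^n,d^n)$ and composing, exactly as in the sample computation $f^n_1=f^2_6\cdot f^{n-2}_1$ given before the statement. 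This produces a surjective graded homomorphism $\bar\phi\colon K\Gamma/I_\Gamma\twoheadrightarrow E(\L)$.

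To obtain the reverse inclusion it suffices, by graded surjectivity and finite-dimensionality of each graded piece, to show $\dim_K(K\Gamma/I_\Gamma)_n\leqslant\dim_K\hat E(\L)_n$ for every $n$. For this I would transport the normal-form argument of Proposition~\ref{prop:monredgen} to the present setting: using the relations annihilating $\a$, the binomial relations $\b\g=\g'\b'$ and the vanishing $\g\g'=0$, every path of $K\Gamma$ reduces modulo $I_\Gamma$ to a normal form $\b^{t}\g^{\varepsilon}$ with $\varepsilon\in\{0,1\}$, so such normal forms span $(K\Gamma/I_\Gamma)_n$. The recursive construction of the sets $g^n$ shows that the nonzero products $\b^{t}\g^{\varepsilon}$ realise precisely the basis $\{f^m_i\}$ of $\Ext^m_\L(\L_0,\L_0)$ for $m=2t+3\varepsilon$, whence the number of irreducible normal forms in hat-degree $n$ equals $\dim_K\hat E(\L)_n$. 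It follows that $\bar\phi$ is a graded isomorphism $\hat E(\L)\cong K\Gamma/I_\Gamma$ with $\a_i,\b_i,\g_i$ in degree $1$.

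I expect the main obstacle to be this last dimension comparison, specifically ruling out any hidden linear relation among the normal forms. The clean way to secure it is to verify, exactly as in Proposition~\ref{prop:monredGB}, that $\mathcal H$ is a quadratic reduced Gr\"obner basis of $I_\Gamma$ by applying Theorem~\ref{reducedG} with the left length lexicographic order; the length-$n$ paths in $\mathrm{Nontip}(I_\Gamma)$ then form a basis of $(K\Gamma/I_\Gamma)_n$ and can be counted directly against $\dim_K\hat E(\L)_n$. The delicate cases in that verification are the overlap differences involving the binomial generator $\b_i\g_j-\g_k\b_l$, whose reduction to zero mirrors the six-case analysis in the proof of Proposition~\ref{prop:monredGB}.
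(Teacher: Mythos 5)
Your proposal is correct, but it completes the statement by a different mechanism than the paper does. The paper's own justification of Proposition~\ref{A=E} is essentially a direct computation: having written down the sets $g^n$ for \emph{all} $n$ and the bases $f^n$ of each $\Ext^n_\L(\L_0,\L_0)$, it computes the Yoneda products of basis elements by choosing liftings (as in the displayed example $f^n_1 = f^2_6\cdot f^{n-2}_1$), and reads off the presentation $K\Gamma/I_\Gamma$ together with the assertion that $\mathcal H$ is a minimal generating set of relations; the Gr\"obner basis property of $\mathcal H$ is only established \emph{afterwards}, and is used solely to deduce Koszulity. You share the first half of this (the surjection $\phi$, the monomial relations from Theorem~\ref{summary} with $D=6$, $A=2$, and the binomial relations by lifting computations), but you replace the "no hidden relations" step --- which the paper leaves inside the phrase "straightforward exercise" --- by a degreewise dimension count: verify combinatorially that $\mathcal H$ is a quadratic reduced Gr\"obner basis of $\langle\mathcal H\rangle$ (a statement internal to $K\Gamma$, so there is no circularity in invoking it before the isomorphism is known), count the nontip paths of length $n$ (there are $16$ for each $n\geqslant 2$: eight pure $\b$-paths supported on the loops $\b_6,\b_7,\b_8$, and eight of the form $\g\b^{n-1}$), and compare with $\dim_K\hat E(\L)_n = |g^{2n}|+|g^{2n+1}| = 8+8$. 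This buys you a rigorous completeness argument using only the eight binomial product computations plus the vanishing from Theorem~\ref{summary}, rather than the full multiplication table, and it costs nothing extra since the Gr\"obner verification is work the paper does anyway --- you have merely reordered its logic. Two small imprecisions, neither a gap: with the stated left length lexicographic order the tips of the binomials are the $\b_i\g_j$, so the nontip normal forms are $\g^{\varepsilon}\b^{t}$ rather than $\b^{t}\g^{\varepsilon}$ (the two spanning claims are equivalent via the binomials, but the nontip basis is the former); and "every path of $K\Gamma$ reduces to $\b^t\g^\varepsilon$" should exclude the length-one paths $\a_i$, which are themselves normal forms contributing to $\hat E(\L)_1$.
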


We now show that $K\Gamma/I_\Gamma$ is a Koszul algebra by showing that $\mathcal{H}$
is a quadratic reduced Gr\"{o}bner basis for the ideal $I_\Gamma$. Let $\mathcal{B}$ be the basis of $K\Gamma$ which consists of all paths. We order the vertices and arrows of $\Gamma$ by
$$\a_1 > \a_2 > \cdots > \a_{18} > \b_1 > \b_2 > \cdots > \b_8 > \g_1 > \g_2 > \cdots > \g_8 > e_1 > e_2 > \cdots > e_{17}$$
and let $>$ be the left length lexicographic order on $\mathcal{B}$ as given in Section~\ref{sec:GB}. Then $>$ is an admissible order.

\begin{prop}
With the above notation, $\mathcal{H}$ is a quadratic reduced Gr\"{o}bner basis of $I_\Gamma$.
\end{prop}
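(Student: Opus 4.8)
The plan is to apply Theorem~\ref{reducedG} to the generating set $\mathcal{H}$ of $I_\Gamma$, exactly as in the monomial case of Proposition~\ref{prop:monredGB}. By construction $\mathcal{H}$ consists of uniform quadratic elements of $K\Gamma$, so it suffices to verify conditions (i), (ii) and (iii) of that theorem. First I would dispose of (i): each element of $\mathcal{H}$ is either a path of length $2$ or a binomial $\b_i\g_j - \g_k\b_l$, and since the chosen order places all $\a$'s above all $\b$'s above all $\g$'s, the tip of every such binomial is $\b_i\g_j$, with coefficient $1$; hence $\CTip(h)=1$ for all $h\in\mathcal{H}$. Condition (ii) is again an inspection: all elements of $\mathcal{H}$ have length $2$, so a distinct $h$ can reduce over $h'$ only if $\Tip(h')\in\Supp(h)$, and running down the listed generators shows this never occurs.

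The substance is condition (iii), that every overlap difference of two members of $\mathcal{H}$ reduces to zero over $\mathcal{H}$. The key structural observation is that each binomial tip has the form $\b_i\g_j$, beginning with a $\b$ and ending with a $\g$. Since an overlap requires the final arrow of one tip to equal the initial arrow of the other, two binomial tips can never overlap (one would have to end in a $\g$ where the other begins with a $\b$). Thus the only overlaps to consider are monomial--monomial, monomial--binomial and binomial--monomial. The overlap difference of two monomials $xy$ and $yw$ is $xyw-xyw=0$, so those vanish trivially, and I am left with the finitely many overlaps involving exactly one binomial.

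For these I would work through the cases directly. An overlap of a binomial $h=\b_i\g_j-\g_k\b_l$ on the left with a monomial $\g_j w\in\mathcal{H}$ yields overlap difference $-\g_k\b_l w$; the point is that the swapped factor $\b_l w$ is either a monomial generator of $\mathcal{H}$ or the tip of a binomial generator, so one (or at most two) further reductions send the expression to zero. Dually, an overlap of a monomial $u\b_i\in\mathcal{H}$ on the left with $h$ gives $u\g_k\b_l$, and the factor $u\g_k$ turns out to lie in $\mathcal{H}$. In a few cases (for instance the overlaps of $\b_6\g_8-\g_8\b_8$ with $\g_8\g_2$, or of $\b_7\g_6-\g_6\b_6$ with $\g_6\g_3$) the intermediate term is itself non-monomial, so a second binomial is applied before a monomial of the form $\g_k\g_m$ completes the reduction; this is exactly the two-step behaviour already seen in the proof of Proposition~\ref{prop:monredGB}. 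Each case is a short computation of the type above, and several overlaps are vacuous (for example nothing in $\mathcal{H}$ begins with $\g_1$, so $\b_7\g_1$ has no right overlap).

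The main obstacle is verifying this closure property uniformly across all the overlaps: for every binomial $\b_i\g_j-\g_k\b_l$ and every generator overlapping its tip on either side, the resulting term must be divisible by the tip of some element of $\mathcal{H}$. This is not forced by the quadratic shape alone; it reflects the overlap structure of $\R^5$ encoded in Proposition~\ref{prop:R5}, which ensures that commuting a degree-$2$ class past a degree-$3$ class is compatible with the remaining relations. Once (iii) is established, Theorem~\ref{reducedG} gives that $\mathcal{H}$ is a reduced Gr\"obner basis, and since every generator is quadratic it is a quadratic reduced Gr\"obner basis; Theorem~\ref{Kosz}(ii) then yields that $K\Gamma/I_\Gamma$ is Koszul.
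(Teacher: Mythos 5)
Your proposal is correct and follows essentially the same route as the paper: both apply Theorem~\ref{reducedG}, dispose of conditions (i) and (ii) by inspection, and verify (iii) by checking the finitely many overlaps of a monomial with a binomial (noting binomial--binomial overlaps cannot occur since every binomial tip begins with a $\b$ and ends with a $\g$), including the same one-step reductions (e.g.\ $\a_{13}\b_6$ against $\b_6\g_3-\g_8\b_3$) and two-step reductions (e.g.\ $\b_6\g_8-\g_8\b_8$ against $\g_8\g_2$). Your explicit explanation of why two binomials never overlap is a small sharpening of the paper's ``by inspection,'' but the argument is otherwise the same.
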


\begin{proof}
We apply Theorem~\ref{reducedG}. The set $\mathcal{H}$ consists of uniform quadratic elements of $K\Gamma$ and generates the ideal $I_\Gamma$. By inspection, $\CTip(h) = 1$ for all $h \in {\mathcal H}$ so condition (i) of Theorem~\ref{reducedG} holds.

(ii). Let $h, h'$ be distinct elements of $\mathcal{H}$. Since both $h$ and $h'$ have length 2, if $h$ reduces over $h'$ then $\Tip(h') \in \Supp(h)$. But, by inspection of $\mathcal{H}$ we see this is never the case. Thus $h$ does not reduce over $h'$ and condition (ii) holds.

(iii). We now consider the overlap difference for two (not necessarily distinct) members of
$\mathcal{H}$. By inspection, it is clear that in order to have an
overlap difference of elements $h, h' \in \mathcal{H}$, then at least one of $h, h'$ is a monomial.
Moreover the overlap difference of two monomial elements always reduces to zero. Thus we only need
to consider the overlap difference of one monomial element and one non-monomial element.
Label the non-monomial elements of $\mathcal{H}_2$ as $h_1 = \b_6 \gamma_3 - \gamma_8 \b_3 , h_2 = \b_6 \gamma_5 - \gamma_8 \b_5 , h_3 =  \b_6 \gamma_8 - \gamma_8 \b_8, h_4 =  \b_7 \gamma_1 - \gamma_6 \b_1, h_5 = \b_7 \gamma_6 - \gamma_6 \b_6, h_6 =  \b_8 \gamma_2 -\gamma_7 \b_2, h_7 = \b_8 \gamma_4 - \gamma_7 \b_4, h_8 = \b_8 \gamma_7 - \gamma_7 \b_7. $

We start with $h_1 = \b_6 \g_3 - \g_8 \b_3$ and $\a_{13} \b_6$.
The overlap difference $o(\a_{13} \b_6, h_1, \g_3, \a_{13})  = \a_{13} \b_6 \g_3 - \a_{13} \b_6 \g_3 + \a_{13} \g_8 \b_3 = \a_{13} \g_8 \b_3$.
Now $\a_{13} \g_8 \in {\mathcal H}$. So $o(\a_{13} \b_6, h_1, \g_3, \a_{13}) \Rightarrow_{\mathcal{H}} 0$.
Most cases follow this pattern of a simple reduction of the overlap difference over ${\mathcal H}$ to zero.
However, a few cases require two reductions. For example, consider $h_3 =  \b_6 \gamma_8 - \gamma_8 \b_8$ and $\g_8 \g_2$. We have $o(h_3, \g_8 \g_2, \g_2, \b_6)  =  \b_6 \g_8 \g_2 - \g_8 \b_8 \g_2 - \b_6 \g_8 \g_2 = - \g_8 \b_8 \g_2$. Now $h_6 = \b_8 \g_2 - \g_7 \b_2 \in {\mathcal H}$ so $o(h_3, \g_8 \g_2, \g_2, \b_6) \Rightarrow_{\mathcal H} -\g_8 \g_7 \b_2$. Next we observe that $\g_8 \g_7 \in {\mathcal H}$ so a second reduction gives that $o(h_3, \g_8 \g_2, \g_2, \b_6) \Rightarrow_{\mathcal H} 0$.

In this way, we can show that the overlap difference of any two elements $h, h' \in \mathcal{H}$ reduces to zero so (iii) holds. Therefore $\mathcal{H}$ satisfies the conditions of Theorem~\ref{reducedG}, and $\mathcal{H}$ is a reduced Gr\"{o}bner basis of the ideal $I_\Gamma$.
\end{proof}

The following result is now immediate using Theorem~\ref{Kosz}(ii).

\begin{thm}
Let $\L$ be the $(6,2)$-stacked algebra of Example~\ref{nonfish}. Let $\hat{E}(\L)$ be the Ext algebra of $\L$ with the hat-degree grading. Then $\hat{E}(\L)$ is a Koszul algebra.
\end{thm}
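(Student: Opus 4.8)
The plan is to assemble the theorem from the two results immediately preceding it, so the proof is essentially a one-line verification that the hypotheses of Theorem~\ref{Kosz}(ii) are met. First I would invoke Proposition~\ref{A=E}, which identifies $\hat{E}(\L)$ with the path-algebra quotient $K\Gamma/I_\Gamma$ as graded algebras, the arrows $\a_i, \b_i, \g_i$ of $\Gamma$ all lying in hat-degree $1$. This reduces the Koszulity of $\hat{E}(\L)$ to the Koszulity of $K\Gamma/I_\Gamma$, the length grading on $K\Gamma$ corresponding to the hat-degree grading on $\hat{E}(\L)$ under this isomorphism.

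Next I would record that $I_\Gamma$ is a quadratic ideal. The minimal generating set $\mathcal{H}$ listed above consists entirely of length-$2$ paths and differences of length-$2$ paths in $K\Gamma$; since $\mathcal{H}$ generates $I_\Gamma$, the ideal is generated by quadratic elements and is therefore quadratic. The preceding proposition then supplies precisely what is required, namely that $\mathcal{H}$ is a \emph{reduced} Gr\"{o}bner basis of $I_\Gamma$ and that it consists of quadratic elements.

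Finally I would apply Theorem~\ref{Kosz}(ii): since $I_\Gamma$ is a quadratic ideal whose reduced Gr\"{o}bner basis $\mathcal{H}$ consists of quadratic elements, the algebra $K\Gamma/I_\Gamma$ is Koszul. Combining this with the isomorphism $\hat{E}(\L)\cong K\Gamma/I_\Gamma$ of Proposition~\ref{A=E} yields that $\hat{E}(\L)$ is a Koszul algebra, as claimed.

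The hard part is not located in this final step at all. All of the genuine content has already been discharged earlier: the explicit construction of the quiver $\Gamma$ and the relations $I_\Gamma$ describing the Ext algebra (Proposition~\ref{A=E}), and the verification, via the overlap-difference conditions of Theorem~\ref{reducedG}, that $\mathcal{H}$ is a quadratic reduced Gr\"{o}bner basis. Given those inputs, the only thing to check here is quadraticity of the ideal and of its reduced Gr\"{o}bner basis, both of which are read off directly from the displayed list of generators of $\mathcal{H}$. Hence the theorem follows immediately.
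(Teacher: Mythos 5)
Your proof is correct and follows exactly the paper's route: the paper also deduces this theorem immediately from Theorem~\ref{Kosz}(ii), using the identification $\hat{E}(\L)\cong K\Gamma/I_\Gamma$ and the preceding proposition that $\mathcal{H}$ is a quadratic reduced Gr\"obner basis of $I_\Gamma$. Your added remark that the genuine work lies in those earlier results, not in this final assembly, is an accurate reading of the paper's structure.
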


\end{example}

We end with some open questions. Firstly, is it the case that the regraded Ext algebra of any
$(D,A)$-stacked algebra with $D >2, A>1, D \neq 2A$ and $D \neq A+1$ is always a Koszul algebra? Secondly, it was shown in
\cite[Theorem 3.6]{bib:GS3} that, for monomial algebras of infinite global
dimension, the $(D, A)$-stacked monomial algebras are precisely
the monomial algebras for which every projective module $P^n$ in a minimal graded projective resolution
$(P^n, d^n)$ of $\Lambda_0$ is generated in a single degree and for which the Ext algebra $E(\L)$ is finitely generated. Can this characterisation be extended to all $(D, A)$-stacked algebras? More generally, what can be said about the class of finite-dimensional algebras $\L$ with the property that the $n$th projective module in a minimal projective resolution of $\L/\rrad$ is generated in a single degree?


\begin{thebibliography}{99}

\bibitem{bib:BGW} D. Beilinson, V. Ginzburg, W. Soergel, \textit{Koszul Duality Patterns in Representation Theory}, J. Amer. Math. Soc. \textbf{9} (1996), 473-528.

\bibitem{bib:ber} R. Berger, \textit{Koszulity for nonquadratic algebras}, J. Algebra \textbf{239} (2001), 705-734.

\bibitem{bib:FFG} D.R. Farkas, C.D. Feustal, E.L. Green, \textit{Synergy in the Theories of Gr\"{o}bner Bases and Path Algebras}, Canad. J. Math. \textbf{45} (1993), 727-739.

\bibitem{bib:G} E.L. Green, \textit{Non-commutative Gr\"{o}bner Bases, and Projective Resolutions, in: Computational Methods for Representations of Groups and Algebras, Essen, 1997,} Progr. Math. \textbf{173}, Birkh\"{a}user, Basel, 1999, 29-60.

\bibitem{GHZ} Green, E.L., Happel, D., Zacharia, D., \textit{Projective
resolutions over Artin algebras with zero relations}, Illinois J. Math.
\textbf{29} (1985), 180-190.

\bibitem{bib:GH} E.L. Green, R. Huang, \textit{Projective Resolutions of Straightening Closed Algebras Generated by Minors}, Adv. in Math. \textbf{110} (1995), 314-333.

\bibitem{bib:GM} E.L. Green, E.N. Marcos, \textit{$\delta$-Koszul Algebras}, Comm. Algebra \textbf{33} (2005), 1753-1764.

\bibitem{bib:GMMZ} E.L. Green, E.N. Marcos, R. Mart\'inez-Villa, P. Zhang, \textit{D-Koszul Algebras}, J. Pure Appl. Algebra \textbf{193} (2004), 141-162.

\bibitem{bib:GMV} E.L. Green, R. Mart\'inez-Villa, \textit{Koszul and Yoneda algebras}, Representation theory of algebras (Cocoyoc, 1994), 247-297, CMS Conf. Proc., \textbf{18}, Amer. Math. Soc., Providence, RI, 1996.

\bibitem{bib:GMVII} E.L. Green, R. Mart\'inez-Villa, \textit{Koszul and Yoneda algebras II}, Algebras and Modules, II, (Geiranger, 1996), 227-244, CMS Conf. Proc., \textbf{24}, Amer. Math. Soc., Providence, RI, 1998.

\bibitem{bib:GS2} E.L. Green, N. Snashall, \textit{The Hochschild Cohomology Ring Modulo Nilpotence of a Stacked Monomial Algebra}, Colloq. Math. \textbf{105} (2006), 233-258.

\bibitem{bib:GS3} E.L. Green, N. Snashall, \textit{Finite Generation of Ext for a Generalization of $D$-Koszul Algebras}, J. Algebra \textbf{295} (2006), 458-472.

\bibitem{bib:GSZ} E.L. Green, \O. Solberg, D. Zacharia, \textit{Minimal Projective Resolutions}, Trans. Amer. Math. Soc. \textbf{353} (2001), 2915-2939.

\bibitem{bib:GZ} E.L. Green, D. Zacharia, \textit{The Cohomology Ring of a Monomial Algebra}, Manuscripta Math. \textbf{85} (1994), 11-23.

\end{thebibliography}
\end{document}